\def\append@label@year@{%
    \safe@set\@tempcnta\bib@year
    \edef\bib@citeyear{\the\@tempcnta}%
    \ifnum\bib@citeyear>9
      \append@to@stem{%
          \ifx\bib@year\@empty
          \else
            \@xp\year@short \bib@citeyear \@nil
          \fi
      }%
    \fi
}
\let\oldtocsection=\tocsection
\renewcommand{\tocsection}[2]{\hspace{0em}\oldtocsection{#1}{#2}}
\def\upddots{\mathinner{\mkern 1mu\raise 1pt \hbox{.}\mkern 2mu
\mkern 2mu \raise 4pt\hbox{.}\mkern 1mu \raise 7pt\vbox {\kern 7
pt\hbox{.}}} }
\numberwithin{equation}{section}
\begin{document}
\setlength{\unitlength}{2.5cm}

%%%%%%%%%%% theorem styles
\newtheorem{thm}{Theorem}[section]
\newtheorem{lm}[thm]{Lemma}
\newtheorem{prop}[thm]{Proposition}
\newtheorem{cor}[thm]{Corollary}
\newtheorem{conj}[thm]{Conjecture}
\newtheorem{specu}[thm]{Speculation}

\theoremstyle{definition}
\newtheorem{dfn}[thm]{Definition}
\newtheorem{eg}[thm]{Example}
\newtheorem{rmk}[thm]{Remark}

\newcommand{\N}{\mathbbm{N}}
\newcommand{\R}{\mathbbm{R}}
\newcommand{\C}{\mathbbm{C}}
\newcommand{\Z}{\mathbbm{Z}}
\newcommand{\Q}{\mathbbm{Q}}

\newcommand{\Mp}{{\rm Mp}}
\newcommand{\Sp}{{\rm Sp}}
\newcommand{\GSp}{{\rm GSp}}
\newcommand{\GL}{{\rm GL}}
\newcommand{\PGL}{{\rm PGL}}
\newcommand{\SL}{{\rm SL}}
\newcommand{\SO}{{\rm SO}}
\newcommand{\Spin}{{\rm Spin}}
\newcommand{\GSpin}{{\rm GSpin}}
\newcommand{\Ind}{{\rm Ind}}
\newcommand{\Res}{{\rm Res}}
\newcommand{\Hom}{{\rm Hom}}
\newcommand{\End}{{\rm End}}
\newcommand{\msc}[1]{\mathscr{#1}}
\newcommand{\mfr}[1]{\mathfrak{#1}}
\newcommand{\mca}[1]{\mathcal{#1}}
\newcommand{\mbf}[1]{{\bf #1}}

\newcommand{\mbm}[1]{\mathbbm{#1}}

\newcommand{\into}{\hookrightarrow}
\newcommand{\onto}{\twoheadrightarrow}

\newcommand{\s}{\mathbf{s}}
\newcommand{\cc}{\mathbf{c}}
\newcommand{\bfa}{\mathbf{a}}
\newcommand{\id}{{\rm id}}
\newcommand{\g}{\mathbf{g}_{\psi^{-1}}}
\newcommand{\w}{\mathbbm{w}}
\newcommand{\Ftn}{{\sf Ftn}}
\newcommand{\p}{\mathbf{p}}
\newcommand{\bq}{\mathbf{q}}
\newcommand{\WD}{\text{WD}}
\newcommand{\W}{\text{W}}
\newcommand{\Wh}{{\rm Wh}}
\newcommand{\ggma}{\omega}
\newcommand{\sct}{\text{\rm sc}}
\newcommand{\Of}{\mca{O}^\digamma}
\newcommand{\gk}{c_{\sf gk}}
\newcommand{\Irr}{ {\rm Irr} }
\newcommand{\Irrg}{ {\rm Irr}_{\rm gen} }
\newcommand{\diag}{{\rm diag}}
\newcommand{\uchi}{ \underline{\chi} }
\newcommand{\Tr}{ {\rm Tr} }
\newcommand{\der}\de
\newcommand{\Stab}{{\rm Stab}}
\newcommand{\Ker}{{\rm Ker}}
\newcommand{\bfp}{\mathbf{p}}
\newcommand{\bfq}{\mathbf{q}}
\newcommand{\KP}{{\rm KP}}
\newcommand{\Sav}{{\rm Sav}}
\newcommand{\de}{{\rm der}}
\newcommand{\tnu}{{\tilde{\nu}}}
\newcommand{\lest}{\leqslant}
\newcommand{\gest}{\geqslant}
\newcommand{\tu}{\widetilde}
\newcommand{\tchi}{\tilde{\chi}}
\newcommand{\tomega}{\tilde{\omega}}
\newcommand{\Rep}{{\rm Rep}}
\newcommand{\A}{{\mbf A}}
\newcommand{\BDI}{{\rm Inv}_{\rm BD}}

\newcommand{\cu}[1]{\textsc{\underline{#1}}}
\newcommand{\set}[1]{\left\{#1\right\}}
\newcommand{\ul}[1]{\underline{#1}}
\newcommand{\wt}[1]{\overline{#1}}
\newcommand{\wtsf}[1]{\wt{\sf #1}}
\newcommand{\anga}[1]{{\left\langle #1 \right\rangle}}
\newcommand{\angb}[2]{{\left\langle #1, #2 \right\rangle}}
\newcommand{\wm}[1]{\wt{\mbf{#1}}}
\newcommand{\elt}[1]{\pmb{\big[} #1\pmb{\big]} }
\newcommand{\ceil}[1]{\left\lceil #1 \right\rceil}
\newcommand{\floor}[1]{\left\lfloor #1 \right\rfloor}
\newcommand{\val}[1]{\left| #1 \right|}

\newcommand{\exc}{ {\rm exc} }

\newcommand{\motimes}{\text{\raisebox{0.25ex}{\scalebox{0.8}{$\bigotimes$}}}}

%% exterior symbol
\makeatletter
\newcommand{\extp}{\@ifnextchar^\@extp{\@extp^{\,}}}
\def\@extp^#1{\mathop{\bigwedge\nolimits^{\!#1}}}
\makeatother

%%%%%%%%% Imported from Dani's paper %%%%%%
\newcommand{\nequiv}{\not \equiv}
\newcommand{\half}{\frac{1}{2}}
\newcommand{\psii}{\widetilde{\psi}}
\newcommand{\ab} {|\!|}
\newcommand{\mb}{{\widetilde{B(\F)}}}

\title{Quasi-admissible, raisable nilpotent orbits, and theta representations}

\author{Fan Gao, Baiying Liu, and Wan-Yu Tsai}

\address{Fan Gao: School of Mathematical Sciences, Zhejiang University, 866 Yuhangtang Road, Hangzhou, China 310058}
\email{gaofan@zju.edu.cn}
\address{Baiying Liu: Department of Mathematics, Purdue University, 
West Lafayette, IN, 47907, USA}
\email{liu2053@purdue.edu}
\address{Wan-Yu Tsai: Department of Mathematics, National Central University, No. 300, Zhongda Rd., Zhongli District, Taoyuan City 320317, Taiwan}
\email{wytsai@math.ncu.edu.tw}

%\date{}
\subjclass[2010]{Primary 11F70; Secondary 22E50}
\keywords{covering groups, nilpotent orbits, wavefront sets, quasi-admissible, raisable, theta representations}
%\thanks{The second-named author is partially supported by the NSF grant DMS-1801273.
% The third-named author is partially supported by a Simons Foundation Collaboration Grant 426446.
%}
\maketitle

\begin{abstract} 
We study the  quasi-admissibility and raisablility of some nilpotent orbits of a covering group. In particular, we determine the degree of the cover such that a given split nilpotent orbit is quasi-admissible and non-raisable. The speculated wavefront sets of theta representations are also computed explicitly, and are shown to be quasi-admissible and non-raisable. Lastly, we determine the leading coefficients in the Harish-Chandra character expansion of theta representations of covers of the general linear groups.
\end{abstract}
\tableofcontents

%%%
\section{Introduction}
Let $F$ be a number field or any of its local fields. Denote  by $F^{\rm al}$ the algebraic closure of $F$. Let $\mbf{G}$ be  a split connected linear reductive group over $F$. If $F$ is a local field, then we are interested in the group $G_F:=\mbf{G}(F)$ or its finite degree central covers
$$\begin{tikzcd}
\mu_n \ar[r, hook] & \wt{G}_F \ar[r, two heads] & G_F,
\end{tikzcd}$$
where we assume that $\mu_n(F) = \mu_n(F^{\rm al})$. If $F$ is a number field with adele ring ${\mathbb{A}}$, the object of interest is $G_{\mathbb{A}}:=\mbf{G}({\mathbb{A}})$ or its $n$-fold central cover
$$\begin{tikzcd}
\mu_n \ar[r, hook] & \wt{G}_\mathbb{A} \ar[r, two heads] & G_\mathbb{A}.
\end{tikzcd}$$
For example, a family of such central extensions naturally arises from the Brylinski--Deligne framework \cite{BD}. If $n=1$, then the covering groups are just the linear algebraic groups. 
We consider exclusively genuine representations of $\wt{G}_F$ or $\wt{G}_\mathbb{A}$, where $\mu_n$ acts via a fixed embedding $\mu_n \into \C^\times$.
We will study the leading wavefront sets of genuine representations $\pi$ of $\wt{G}_F$ or $\wt{G}_\mathbb{A}$, which are related to the generalized Whittaker models of $\pi$. We first give a brief recapitulation.

Assume that $F$ is a $p$-adic local field. Every irreducible admissible representation $(\pi, V_\pi)$ of $\wt{G}_F$ defines a character distribution $\chi_\pi$ in a neighborhood of $0$ in $\mfr{g}_F={\rm Lie}(G_F)$. Moreover, there exists a compact open subset $S_\pi$ of $0$ such that for every smooth function $f$ with compact support in $S_\pi$, one has (see \cites{How1, HC99, Li3})
\begin{equation} \label{E:char}
\chi_\pi(f) = \sum_{\mca{O} \in \mca{N}}c_\mca{O}(\pi) \cdot \int \hat{f} \ \mu_\mca{O},
\end{equation}
where $\mca{N}$ denotes the set of nilpotent orbits in $\mfr{g}_F$ under the conjugation action of $G_F$. Here $\mu_\mca{O}$ is a certain Haar measure on $\mca{O}$ properly normalized, and $\hat{f}$ is the Fourier transform of $f$ with respect to the Cartan--Killing form on $\mfr{g}_F$ and a non-trivial character 
$$\psi: F\to \C^\times;$$
one has $c_\mca{O}(\pi):=c_{\mca{O}, \psi}(\pi) \in \C$. An analogue of \eqref{E:char} for $F=\R$ was given by Barbasch--Vogan \cite{BV3}. Denote
$$\mca{N}_{\rm tr}(\pi) = \set{\mca{O} \in \mca{N}: \ c_\mca{O}(\pi) \ne 0}$$
and let
$$\mca{N}_{\rm tr}^{\rm max}(\pi) \subset \mca{N}_{\rm tr}(\pi)$$
be the subset consisting of all maximal elements in $\mca{N}_{\rm tr}(\pi)$.

The set  $\mca{N}_{\rm tr}^{\rm max}(\pi)$  gives the Gelfand--Kirillov dimension
$$d_{\rm GK}(\pi):=\frac{1}{2} \max \set{ \dim \mca{O}: \ \mca{O} \in \mca{N}_{\rm tr}^{\rm max}(\pi)}$$
which satisfies
$$\dim \pi^K \approx {\rm vol}(K)^{-d_{\rm GK}(\pi)},$$
as the open compact subgroup $K \subset G$ approaches to the identity, see \cite{Sav94}. It is also known (see \cites{MW1, Var0, Pate}) that the set $\mca{N}_{\rm tr}^{\rm max}(\pi)$ is equal to the set of maximal nilpotent orbits with respect to which the generalized Whittaker models for $\pi$ are nontrivial. 

More precisely, let $(f, h, u) \subset \mfr{g}_F$ be  an $\mfr{sl}_2$-triple. In this case, $h$ is called a neutral element and it gives a filtration
$$\mfr{g}_F = \bigoplus_{i \in \Z} \mfr{g}^h[i],$$
and $u \in \mfr{g}^h[-2]$.
Write
$$\mfr{n}_{h, u}:= \bigoplus_{i \geq 2} \mfr{g}^h[i] \subset \mfr{g}_F$$
and let $N_{h, u}:=\exp(\mfr{n}_{h,u}) \subset G_F$ be the unipotent subgroup of $G_F$. There is a character $\psi_u: N_{h, u} \to \C^\times$ given by
$$\psi_u(n)=\psi(\kappa(u, \log(n))),$$
where $\kappa: \mfr{g}_F \times \mfr{g}_F \to F$ is the Killing form. For every $\pi\in \Irrg(\wt{G}_F)$, the twisted Jacquet module is given by 
$$\pi_{N_{h, u}, \psi_u}:=\Hom_{N_{h, u}}(\pi, \psi_u).$$
The above depends only on the $G_F$-orbit $\mca{O} \subset \mfr{g}_F$ of $u$, and thus we may write
$$\pi_{N_\mca{O}, \psi_\mca{O}} = \pi_{N_{h, u}, \psi_u}$$
for any choice of $u\in \mca{O}$. This gives 
$$\mca{N}_{\rm Wh}(\pi)=\set{\mca{O} \subset \mfr{g}_F: \ \pi_{N_\mca{O}, \psi_\mca{O}} \ne 0}$$
and we let $\mca{N}_{\rm Wh}^{\rm max}(\pi) \subset \mca{N}_{\rm Wh}(\pi)$ be the subset of maximal elements. Then it was shown in \cites{MW1, Var0, Pate} that
\begin{enumerate}
\item[--] $\mca{N}_{\rm tr}^{\rm max}(\pi) = \mca{N}_\Wh^{\rm max}(\pi)$ for every $\pi \in \Irrg(\wt{G}_F)$, and
\item[--] for every $\mca{O}$ in the above (equal) sets, one has $c_\mca{O} = \dim_\C \pi_{N_\mca{O}, \psi_\mca{O}}$.
\end{enumerate}
We call $\mca{N}_{\rm tr}^{\rm max}(\pi)$ and thus also $\mca{N}_\Wh^{\rm max}(\pi)$ the wavefront set of $\pi$. For $\pi \in \Irr(\GL_r)$, a relation between the full set $\set{c_{\mca{O}}: \mca{O} \in \mca{N}_{\rm tr}(\pi)}$ and certain degenerate Whittaker models of $\pi$ is given in the recent work of M. Gurevich \cite{Gur23}.

If $F$ is a number field, and $\mca{O} \subset \mfr{g}_F$ a nilpotent orbit, then for any genuine automorphic representation $\pi=\otimes_v \pi_v$ of $\wt{G}_\mathbb{A}$, one can define the Whittaker--Fourier $\psi_\mca{O}$-coefficients of $\pi$. Moreover, one has the analogous $\mca{N}_\Wh(\pi)$ and $\mca{N}_\Wh^{\rm max}(\pi)$ of nilpotent orbits in $\mfr{g}_F$ associated with $\pi$.  These two sets are intimately related to $\mca{N}_\Wh(\pi_v)$ and $\mca{N}_\Wh^{\rm max}(\pi_v)$. For an  either local or global representation $\pi$, it is important to understand the set $\mca{N}_\Wh^{\rm max}(\pi)$, as shown above. It also has deep applications, for example in the theory of descent and the Gan--Gross--Prasad conjectures and others, see \cites{GRS11, FG18, JiZh20} and references therein.

Nevertheless, determining the set $\mca{N}_\Wh^{\rm max}(\pi)$ is a difficult problem for general $\pi$. For $\GL_r$, Ginzburg \cite{Gin0} and Dihua Jiang \cite{Jia14} gave several speculations for the set $\mca{N}_{\rm Wh}^{\rm max}(\pi)$ in terms of the M{\oe}glin--Waldspurger classification of the spectrum of $\GL_r$ (see \cite{LX21} for some recent progress). For general $G$, the Arthur parametrization of certain unitary representations $\pi$ is expected to encode information on $\mca{N}_\Wh^{\rm max}(\pi)$ in terms of their Arthur parameters. This in part is tied closely with the local archimedean case as studied by Joseph, Barbasch, Vogan, Adams and many others. The development in the archimedean case was more satisfactory, as for example for complex Lie groups, the set $\mca{N}_{\rm tr}^{\rm max}(\pi)$ is well-understood from Barbasch--Vogan \cites{BV4, BV5}. For linear $G$, as alluded to above, Jiang \cite{Jia14} formulated several conjectures regarding $\mca{N}_\Wh^{\rm max}(\pi)$ using the parameter of $\pi$. There has been progress towards it as in \cites{JiLi16, JiZh17, JL22, LS22a, LS22b, JLZ}, among many others. We also have the recent work by Ciubotaru, Mason-Brown and Okada towards understanding the stable wavefront sets and their refined versions, see \cites{Oka, CMO}.

For covering groups, again, advance has been made more for the archimedean case, as can be seen from \cites{ABPTV, Tsa19} and references therein. Some of these results rely on the theory of Harish-Chandra modules and their primitive ideals. The ``easiest" genuine representation of a covering group $\wt{G}$ is the theta representation $\Theta(\wt{G})$, a prototype of which is called the even Weil representation of the double cover $\wt{\Sp}_{2r}^{(2)}$. It is desirable to have a full description of $\mca{N}_\Wh^{\rm max}(\Theta(\wt{G}))$.  It was proved by Y.-Q. Cai and Savin that
\begin{equation} \label{WFGL}
\mca{N}_\Wh^{\rm max}(\Theta(\wt{\GL}_r^{(n)})) = \{(n^a b)\},
\end{equation}
where $r=an + b$ with $0\lest b < n$. For $n\gest r$, this recovers the earlier result of Kazhdan--Patterson on generic theta representations. Formulas analogous to \eqref{WFGL} were proved for $\wt{\SO}_{2r+1}^{(4)}$ and $\wt{\GSpin}_{2r}^{(2)}$, see \cites{BFrG, BFG} and \cite{Kap17-1} respectively. The case of $\wt{\Sp}_{2r}^{(n)}$ was also studied extensively by Friedberg and Ginzburg \cites{FG15, FG17-2, FG18, FG20}.
In \cite{GaTs}, a uniform but speculative formula of $\mca{N}_\Wh^{\rm max}(\Theta(\wt{G}))$ was given for general $\wt{G}$, which was motivated from the works mentioned above.

It has been expected for a long time that over the algebraic closure the wavefront set is a singleton for general $\pi$. On the contrary, C.-C. Tsai gave a counter-example recently in \cite{Tsa22} for certain epipelagic supercuspidal representations of $U_7$. However, we still expect,
as from \cites{Oka, CMO} for linear groups, 
that for the genuine Iwahori-spherical representations, in particular unramified genuine theta representations, the wavefront set is a singleton over $F^{\rm al}$.

\subsection{Main results}

Although it is difficult to determine precisely $\mca{N}_\Wh^{\rm max}(\pi)$ for general $\pi$, some work has been done in the literature to determine whether a candidate $\mca{O}$ could possibly lie in $\mca{N}_\Wh^{\rm max}(\pi)$ for some $\pi$. 
Pertaining to this are the notions of admissible or quasi-admissible orbits. Equivalently, one has some sufficient condition for an orbit to lie in the complement set
\begin{equation} \label{WFc}
\mca{N} - \bigcup_{\pi \in \Irrg(\wt{G})} {\rm WF}(\pi),
\end{equation}
especially when $\wt{G} =G$ is a linear group.

For real Lie groups, the notion of admissibility was first proposed by Duflo \cite{Duf80}. 
This notion was studied both in the $p$-adic and the real setting in many works later, for example \cites{Tor, Nev99, Nev02}, and we refer the reader to loc. cit. for more extensive references and historical discussion. Utilizing a more general ``Whittaker pair", the recent work by Gomez, Gourevitch and Sahi in \cites{GGS17, GGS21} enables one to consider degenerate Whittaker models, where the discussion on admissibility and quasi-admissibility of nilpotent orbits was framed generally and applies easily to covering groups as well.

Closely related to admissible (or slightly weaker, quasi-admissible) orbits is the notion of special orbits, which correspond to the special Weyl group representations via the Springer correspondence. It was shown by M{\oe}glin \cite{Moe96} that for classical groups, every admissible orbit has to be special. Thus, a leading wavefront orbit for classical linear algebraic groups is necessarily a special orbit. This fact fails for covering groups, as seen already from the three-fold cover of $G_2$ and many covering groups considered in this paper. In fact, in a different direction, Jiang, Savin and the second-named author  considered in \cite{JLS} the notion of raisability of nilpotent orbits which equally applies to covering groups as well. Roughly speaking, if a nilpotent orbit is raisable, then it must lie in the set \eqref{WFc}.

Our present paper is motivated from the works above. It could be in part considered as an application and explication of \cites{GGS21, JLS} in the covering setting, and in part as a sequel to \cite{GaTs}. Below we give an elaboration of this and also state our main results.

First, by applying and analyzing the techniques from \cites{GGS17, GGS21} and \cite{JLS}, we determine the orbits $\mca{O} \subset \mfr{g}_F$ which are $\wt{G}_F^{(n)}$-quasi-admissible or $\wt{G}_F^{(n)}$-raisable. As alluded to above, the importance of these two notions is the following: 
\begin{enumerate}
\item[--] if $\mca{O}$ is $\wt{G}^{(n)}$-raisable or not $\wt{G}^{(n)}$-quasi-admissible, then it does not lie in $\mca{N}^{\rm max}_{\rm Wh}(\pi)$ for any $\pi\in \Irrg(\wt{G}^{(n)})$.
\end{enumerate}
For technical reasons, we only consider $F$-split nilpotent orbits. In \S \ref{S:q-r}, we consider general $\wt{G}$ and give equivalent criteria for quasi-admissibility which are amenable to explicit verification, similarly for raisability; see Propositions \ref{P:qadm}, \ref{P:exp-spl} and Proposition \ref{P:rais}.

In \S \ref{S:ex-ana}, applying results from \S \ref{S:q-r}, we analyze groups of each Cartan type and determine the quasi-admissibility and raisability of each $F$-split orbit $\mca{O}$ of $G$, whenever possible. 
%The following is amalgamated from Theorems \ref{T:typeA} \ref{T:tyBD}, \ref{T:tyC} and Tables 1-5, where more details are given.

\begin{thm} \label{T:main1}
Let $\wt{G}^{(n)}$ be the $n$-fold cover of any of $\GL_r, \SO_{2r+1}, \SO_{2r}, \Sp_{2r}$. Conditions for quasi-admissibility and raisability of an $F$-split orbit $\mca{O}=(p_1^{d_1} \cdots p_i^{d_i} \cdots p_k^{d_k}) \in \mca{N}$ are stipulated in Theorems \ref{T:typeA}, \ref{T:tyBD}, \ref{T:tyC}. For covering groups of the simply-connected exceptional group $G_2, F_4, E_r, 6\lest r \lest 8$, quasi-admissible and raisable orbits are given in Tables \ref{table 1}-\ref{table 5}, whenever our method applies.
\end{thm}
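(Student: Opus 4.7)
The theorem is compiled from the detailed results stated in the subsequent section, so the plan is essentially a case-by-case program driven by the equivalent criteria developed in \S\ref{S:q-r}. First, I would recall from Propositions \ref{P:qadm} and \ref{P:exp-spl} that $\wt{G}^{(n)}$-quasi-admissibility of an $F$-split orbit $\mca{O}$ is reduced to a condition on the splitting behavior of the cover over the stabilizer in $G$ of a neutral element, phrased through the Brylinski--Deligne invariants restricted to the reductive centralizer of $\mca{O}$. Likewise, Proposition \ref{P:rais} reduces $\wt{G}^{(n)}$-raisability to the existence of a larger split orbit $\mca{O}'$ for which the corresponding covering data remains compatible. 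Both reductions convert the abstract notions into finite, combinatorially verifiable conditions, and the theorem reduces to checking these conditions orbit by orbit.

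For the classical groups $\GL_r, \SO_{2r+1}, \SO_{2r}, \Sp_{2r}$, I would parametrize $F$-split orbits by partitions $(p_1^{d_1} \cdots p_k^{d_k})$ of the appropriate constrained type and compute the weighted Dynkin datum directly from the Jacobson--Morozov $\mfr{sl}_2$-decomposition of the standard representation. The reductive part of the centralizer in $G$ is then a classical group whose shape is read off from the multiplicities $d_i$, and the Brylinski--Deligne invariants restricted to this centralizer admit an explicit description in terms of $n$ and the $p_i$. Unpacking the criteria of the first paragraph on this data yields Theorems \ref{T:typeA}, \ref{T:tyBD}, \ref{T:tyC}. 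For raisability, the analysis further requires a combinatorial study of the partitions lying just above $(p_1^{d_1} \cdots p_k^{d_k})$ in the closure order and a check that a compatible neutral element can be lifted.

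For the simply-connected exceptional groups $G_2, F_4, E_6, E_7, E_8$, the finitely many nilpotent orbits are parametrized by Bala--Carter labels, with weighted Dynkin diagrams and reductive centralizers available from standard tables. For each orbit one evaluates the criteria of the first paragraph directly and records the outcome; these computations populate Tables \ref{table 1}--\ref{table 5}. Because exceptionality of the group forces a larger case list but does not alter the criteria, this reduces to a finite, if tedious, verification.

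The main obstacle lies in the classical analysis for the orthogonal and symplectic groups, where the combinatorics of very even and very odd parts of the partitions makes the translation of the abstract criteria into clean partition-level conditions delicate, and demands careful book-keeping of signs, parities, and the compatibility imposed by the cover degree $n$. A secondary difficulty occurs in the exceptional cases, where for a small number of orbits neither our sufficient condition for quasi-admissibility nor that for raisability is conclusive; this is precisely the content of the qualifier \emph{whenever our method applies} appearing in the statement.
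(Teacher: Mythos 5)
Your description of the quasi-admissibility side is a fair summary of the paper's strategy: compute $G_{\gamma,0}$ (a product of classical groups read off from the multiplicities), identify the Brylinski--Deligne invariants $(Q_1,Q_2)$ of the inherited $n$-fold cover and the metaplectic double cover from $\phi: G_\gamma \to \Sp(\mfr{g}[1])$, then feed these into Propositions \ref{P:qadm} and \ref{P:exp-spl} to get the clean divisibility conditions in Theorems \ref{T:typeA}--\ref{T:tyC} and Tables \ref{table 1}--\ref{table 5}.

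Your treatment of raisability, however, does not match what the paper actually does and would not work as stated. You describe raisability as ``a combinatorial study of the partitions lying just above $\mca{O}$ in the closure order and a check that a compatible neutral element can be lifted.'' But Definition \ref{D:rais} and Proposition \ref{P:rais} say nothing about scanning the closure order of partitions. Raisability is verified by exhibiting a \emph{single specific} homomorphism $\tau: \mfr{sl}_2 \to \mfr{g}_\gamma$ into the centralizer satisfying conditions (C0)--(C3), computing the pair of Brylinski--Deligne invariants $(Q_1^\tau, Q_2^\tau)$ attached to $\wt{\SL}_2^{(n),\tau}$ and $\wt{\SL}_2^{(2),\phi_m}$, and then invoking Proposition \ref{P:rais} to see when $p_\tau^\mfr{m}$ fails to split. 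In Section \ref{S:ex-ana} this is carried out with explicit choices: for type $A$ one takes $\tau_i$ corresponding to a simple root of $\GL_{d_i}^{\Delta,p_i}$; for types $B$, $C$, $D$ one distinguishes a symplectic case (long simple root of a factor $\Sp$) from an orthogonal case (the $\GL_2$-root inside a factor $\SO$); for the exceptional groups one picks a suitable root $\SL_2$ inside $\mbf{G}_{\gamma,\der}$. The bookkeeping you invoke about ``partitions just above in the closure order'' is a gloss on what raisability is morally meant to detect, but it is not the computational mechanism, and replacing the BD-invariant computation by closure-order combinatorics would leave you unable to reproduce the raisability columns of the theorems and tables. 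You also state that ``neither our sufficient condition for quasi-admissibility nor that for raisable is conclusive'' for some exceptional orbits; in fact the quasi-admissibility criterion in Proposition \ref{P:qadm} is an equivalence and \emph{is} conclusive whenever computed, and the qualifier ``whenever our method applies'' refers to the raisability side, where conditions (C0)--(C3) may fail to hold for any $\tau$ (marked ``n.a.'' in the tables).
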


For exceptional groups, we check the quasi-admissibility and raisability for all orbits of $G_2$ and $F_4$, but only for those orbits which are speculatively the leading wavefront set of theta representations for $E_{r}, 6\lest r \lest 8$. However, the method of the computation clearly applies to any orbit of $E_r$.

Next, we consider in \S \ref{S:wf}  theta representations $\Theta(\nu)$ of $\wt{G}^{(n)}$, where $\nu \in X \otimes \R$ is a certain exceptional character. Naturally associated to $\nu$ is the nilpotent orbit $\mca{O}_{\rm Spr}(j_{W_\tnu}^W \varepsilon_\tnu) \subset \mfr{g}_{F^{\rm al}}$, which arises from the Springer correspondence and the $j$-induction of the sign character $\varepsilon_{\tnu}$ of $W_{\tnu} \subset W$. This orbit is expected to be $\mca{N}_{\rm Wh}^{\rm max}(\Theta(\nu))\otimes F^{\rm al}$.
In \S \ref{SS:som-com}, we show that the computation of $\mca{O}_{\rm Spr}(j_{W_\tnu}^W \varepsilon_\tnu) \subset \mfr{g}_{F^{\rm al}}$ is reduced to Sommers' duality \cite{Som01} between nilpotent orbits, which generalizes the classical Barbasch--Vogan duality. With an explicit computation, we determine the quasi-admissibility and non-raisability of such orbits.

\begin{thm}[{Theorem \ref{T:comp}}] \label{T:main2}
Let $\wt{G}^{(n)}$ be covers of the classical and exceptional groups considered in Theorem \ref{T:main1}.
\begin{enumerate}
\item[(i)] The orbit $\mca{O}_{\rm Spr}(j_{W_\tnu}^W \varepsilon_\tnu) \subset \mfr{g}_{F^{\rm al}}$ for $\wt{G}^{(n)}$ of classical groups is explicitly given as in Table \ref{table 6}; for $\wt{G}^{(n)}$ of exceptional groups, it is given in Tables \ref{table 7}--\ref{table 11}.
\item[(ii)] The $F$-split orbit $\mca{O}_\Theta \subset \mfr{g}_F$ of type  $\mca{O}_{\rm Spr}(j_{W_\tnu}^W \varepsilon_\tnu)$ is quasi-admissible and not raisable.
\item[(iii)] If the orbit $\mca{O}_\Theta$ is the regular orbit of a Levi subgroup of $G$, then it supports the generalized Whittaker model of  the theta representation $\Theta(\nu)$.
\end{enumerate}
\end{thm}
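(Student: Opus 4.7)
The plan is to prove the three parts in sequence, with parts (i) and (ii) being essentially case-by-case verifications that proceed type by type, and part (iii) being a reduction to the generic (regular) case.

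For part (i), the plan is to reduce $\mca{O}_{\rm Spr}(j_{W_\tnu}^W \varepsilon_\tnu)$ to a purely combinatorial computation via Sommers' duality, exploiting the fact (recorded in \S \ref{SS:som-com}) that the Springer image of a $j$-induced sign character from a parabolic subgroup $W_\tnu$ equals the Sommers dual of a distinguished pseudo-Levi orbit determined by $\tnu$. Concretely, I would first identify $W_\tnu \subset W$ from the exceptional character $\nu$, then read off the corresponding pseudo-Levi subalgebra and its principal/zero orbit, and finally apply Sommers' duality $d_S$. For classical types $A_r, B_r, C_r, D_r$ the partitions dual to the relevant pseudo-Levis can be computed directly using the collapse operation on partitions, filling in Table \ref{table 6} case by case in $n$ and the residue conditions. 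For exceptional $G_2, F_4, E_6, E_7, E_8$ the computation is finite and proceeds by consulting Sommers' tables of duality for each admissible $n$, producing Tables \ref{table 7}--\ref{table 11}.

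For part (ii), the strategy is to feed each orbit $\mca{O}_\Theta$ obtained from part (i) into the criteria established in Theorem \ref{T:main1}. For classical groups, this means checking for each partition $(p_1^{d_1}\cdots p_k^{d_k})$ produced in Table \ref{table 6} the explicit divisibility and parity conditions that Theorems \ref{T:typeA}, \ref{T:tyBD}, \ref{T:tyC} require for quasi-admissibility and non-raisability. The residue structure coming from $r = an+b$ (and its analogues for types $B,C,D$) should match precisely with the arithmetic conditions imposed by those theorems — this is the crucial combinatorial coincidence. For exceptional groups, the check reduces to matching each entry of Tables \ref{table 7}--\ref{table 11} against the quasi-admissible/non-raisable entries of Tables \ref{table 1}--\ref{table 5}, which is a finite lookup.

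For part (iii), assume $\mca{O}_\Theta$ is the regular orbit of a Levi subgroup $L \subset G$, say $\mca{O}_\Theta = {\rm Ind}_L^G(0)$. The plan is to realize the generalized Whittaker functional on $\Theta(\nu)$ as the generic (non-degenerate) Whittaker functional of the Jacquet module with respect to the unipotent radical of a parabolic with Levi $L$. One shows that this Jacquet module contains a generic theta representation of the covering Levi $\wt{L}^{(n)}$ as a subquotient (this is the standard constant-term/parabolic-induction picture for theta representations), and then invokes known genericity results in the Levi factor (e.g.\ the Kazhdan--Patterson construction for $\GL$ factors and its analogues for other classical factors) to conclude non-vanishing of the generic Whittaker functional of that subquotient.

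The main obstacle will be part (i) for exceptional groups and part (ii) for classical groups: in both cases one must ensure that the partitions/orbits output by Sommers' duality actually satisfy the arithmetic divisibility conditions governing quasi-admissibility and non-raisability, and this agreement is not a priori guaranteed — it must be verified by a careful case analysis on the residue $b$ of $r$ modulo $n$ (and its analogues). The exceptional tables also require care to avoid transcription errors, but present no conceptual difficulty. Part (iii) is comparatively routine once a standard reduction to a generic Levi theta representation is set up, provided the necessary genericity result for $\wt{L}^{(n)}$ is available in the literature or can be quoted from \cite{GaTs}.
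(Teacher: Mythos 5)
Your proposal follows the paper's proof closely in all three parts: Sommers' duality (\S\ref{SS:som-com}, \S\ref{SS:WFcl}) for (i), a case-by-case comparison of Tables \ref{table 6}--\ref{table 11} against Theorems \ref{T:typeA}, \ref{T:tyBD}, \ref{T:tyC} and Tables \ref{table 1}--\ref{table 5} for (ii), and reduction to a semi-Whittaker model on a Levi for (iii). The one thing worth making sharper in part (iii) is that the paper does not assert equality of the generalized Whittaker model with the semi-Whittaker model but only uses the surjection $\pi_{N_\mca{O},\psi_\mca{O}} \onto \pi_{S,u}$ from \cite[Theorem A]{GGS17} (the direction of the arrow is what makes nonvanishing propagate), then the periodicity theorem of \cite{BFrG2} to identify $J_U(\Theta(\wt{G},\nu))$ with a theta representation of $\wt{L}$, and finally the dimension formula \cite[Proposition 6.2]{Ga6} together with the numerical criterion of \cite[\S3]{GaTs} for genericity of $\Theta(\wt{L}_\mca{O})$ — rather than a direct appeal to Kazhdan--Patterson and its ``analogues,'' which would not by itself cover the non-$\GL$ factors (e.g.\ the $\Sp_b$ factor in the $\wt{\Sp}_{2r}^{(n)}$ case).
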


Last, in \S \ref{S:cO} we determine $c_\mca{O}$ for $\mca{O}$ in $\mca{N}_\Wh^{\rm max}(\Theta(\wt{\GL}_r^{(n)}))$, which was already shown by Savin and Y.-Q. Cai to be equal to 
$$\mca{O}_{\rm Spr}(j_{W_\tnu}^W \varepsilon_\tnu) = (n^a b).$$
The main result in \S \ref{S:cO} is then
\begin{thm}[{Theorem \ref{T:cO}}] \label{T:main3}
Consider a theta representation $\Theta(\wt{\GL}_r^{(n)})$ of the Kazhdan--Patterson cover $\wt{\GL}_r^{(n)}$. Then for the unique orbit $\mca{O}=(n^a b)$ in $\mca{N}_\Wh^{\rm max}(\Theta(\wt{\GL}_r^{(n)}))$, one has
$$c_\mca{O} =\angb{j_{W_\nu}^W (\varepsilon_\nu)}{ \varepsilon_W \otimes \sigma^{\msc{X}} }_W.$$
Thus, Conjecture \ref{C:main} holds for $\wt{G}=\wt{\GL}_r^{(n)}$.
\end{thm}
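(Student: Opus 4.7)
The plan is to exploit that $\mca{O}=(n^a b)$ is the regular nilpotent orbit of the Levi $M = \GL_n^a \times \GL_b \subset \GL_r$, so that part (iii) of Theorem \ref{T:main2} applies and guarantees in particular that the generalized Whittaker functional attached to $\mca{O}$ is supported on $\Theta(\wt{\GL}_r^{(n)})$. Using the Rodier-type heredity for such regular-on-Levi orbits (valid in the covering setting after choosing a genuine additive character of the cover of the torus of $M$), one obtains
$$c_{\mca{O}} \;=\; \dim_{\C} \Theta_{N_{\mca{O}},\psi_{\mca{O}}} \;=\; \dim_{\C} \Wh_{\psi_M}\!\bigl(r_P(\Theta(\wt{\GL}_r^{(n)}))\bigr),$$
where $P=MN$ is the standard parabolic with Levi $M$ and $\psi_M$ is the genuine character on $\wt{M}$ realizing the $M$-regular nilpotent of $\mfr{m}$. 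The task thus becomes a concrete computation of a Whittaker dimension on a Jacquet module.

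For the main computation, I would realize $\Theta(\wt{\GL}_r^{(n)})$ as the Langlands subquotient of the unramified genuine principal series $I(\tnu)$ induced from the exceptional character, and apply the geometric lemma for covers: the Jacquet module $r_P \circ I(\tnu)$ admits a filtration indexed by $W_{\tnu}\backslash W / W_M$, whose graded pieces are the genuine principal series on $\wt{M}$ attached to the $w$-translate of $\tnu|_M$, twisted by the Kazhdan--Patterson cocycle. The Whittaker functor applied to each such piece picks out a one-dimensional space precisely when the translate is $M$-regular on the cover, so the full principal series contributes a sum of one-dimensional pieces. Passing from $I(\tnu)$ to the subquotient $\Theta(\wt{\GL}_r^{(n)})$ replaces this total count by the multiplicity of a canonical Weyl group submodule, which by the Springer correspondence is exactly $j_{W_{\nu}}^W(\varepsilon_{\nu})$—the representation attached under Springer to the orbit $(n^a b)$ computed in Theorem \ref{T:main2}(i).

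It then remains to identify this multiplicity with the inner product on the right-hand side. The Whittaker-generic constituents in the filtration above are indexed by a virtual $W$-character of the form $\varepsilon_W \otimes \sigma^{\msc{X}}$, where $\varepsilon_W$ accounts for the sign arising from the normalization of the Jacquet module and $\sigma^{\msc{X}}$ encodes the action of $W$ on the Whittaker functionals determined by the Kazhdan--Patterson twist $\msc{X}$. Applying Frobenius reciprocity for $j$-induction, together with the explicit description of $\sigma^{\msc{X}}$, one converts the multiplicity of $j_{W_{\nu}}^W(\varepsilon_{\nu})$ in this character into the pairing
$$\dim_{\C} \Wh_{\psi_M}\!\bigl(r_P(\Theta)\bigr) \;=\; \angb{j_{W_{\nu}}^W(\varepsilon_{\nu})}{\varepsilon_W \otimes \sigma^{\msc{X}}}_W,$$
which, combined with the first displayed equation, is exactly the asserted formula. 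Conjecture \ref{C:main} for $\wt{G}=\wt{\GL}_r^{(n)}$ then follows.

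The main obstacle is the bookkeeping in the middle step: tracking which double cosets in $W_{\tnu}\backslash W / W_M$ survive from $I(\tnu)$ into the subquotient $\Theta(\wt{\GL}_r^{(n)})$ after applying the Whittaker functor, and verifying that the combined effect of the cover cocycle, the intertwining operators, and the action of $W$ on central characters on $\wt{M}$ assembles into precisely the single virtual character $\varepsilon_W \otimes \sigma^{\msc{X}}$ rather than a more delicate alternating sum. Equivalently, one must match the Plancherel-side bookkeeping of genuine Whittaker functionals on $\wt{\GL}_r^{(n)}$ with the Springer-side bookkeeping encoded by $\msc{X}$; once this compatibility is in hand, the identification of the two multiplicity formulas is essentially formal.
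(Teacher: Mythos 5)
Your opening reduction is sound and matches the paper's first step: using the M{\oe}glin--Waldspurger/Patel result together with \cite[Theorem 1.5]{GGS21} and the periodicity of theta representations, both you and the paper arrive at $c_{\mca{O}} = \dim \Wh_{\psi}\bigl(r_P(\Theta(\wt{\GL}_r^{(n)}))\bigr) = \dim \Wh_{\psi}(\Theta(\wt{M}_\lambda^{(n)}))$, with $M_\lambda$ the Levi of type $(n^a b)$. From there, however, you take a much harder and ultimately incomplete road. You propose to re-expand $\Theta$ as a subquotient of $I(\tnu)$ and compute $\dim\Wh(r_P(\Theta))$ via the geometric lemma filtration of $r_P(I(\tnu))$ indexed by $W_{\tnu}\backslash W/W_M$, then determine which pieces survive in the Langlands subquotient and show that the surviving virtual character is $\varepsilon_W\otimes\sigma^{\msc{X}}$. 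You yourself flag this passage as ``the main obstacle,'' and it is: nothing in the proposal actually identifies the surviving character as $\varepsilon_W\otimes\sigma^{\msc{X}}$, nor does it explain why Springer theory delivers exactly the multiplicity of $j^W_{W_\nu}(\varepsilon_\nu)$ in that character. This is precisely the hard part, so the argument as written is a sketch of a plan, not a proof.

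The paper sidesteps that filtration analysis entirely. The key external input is \cite[Proposition 6.2]{Ga6}, which already gives the clean formula $\dim\Wh_\psi(\Theta(\wt{M}_\lambda)) = \angb{\varepsilon_{W_\lambda}}{\sigma^{\msc{X}}}_{W_\lambda}$; by Frobenius reciprocity this is $\angb{\Ind_{W_\lambda}^W(\varepsilon_{W_\lambda})}{\sigma^{\msc{X}}}_W$. The remaining work is purely Weyl-group combinatorics: one uses the decomposition $\Ind_{W_\lambda}^W(\varepsilon_{W_\lambda}) = j_{W_\lambda}^W(\varepsilon_{W_\lambda}) + \sum_{\mu>\lambda} j_{W_\mu}^W(\varepsilon_{W_\mu})$ \cite[Theorem 5.4.7]{GePf}, kills the higher terms by noting that for $\mu>\lambda$ the Levi $M_\mu$ has a block of size $>n$ and therefore $\dim\Wh_\psi(\Theta(\wt{M}_\mu))=0$, and finally uses $\varepsilon_W\otimes j_{W_{\lambda^\top}}^W(\varepsilon) = j_{W_\lambda}^W(\varepsilon)$ \cite[Corollary 5.4.9]{GePf} together with $W_\nu = W_{\lambda^\top}$ to land on $\angb{j_{W_\nu}^W(\varepsilon_\nu)}{\varepsilon_W\otimes\sigma^{\msc{X}}}_W$. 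To turn your proposal into a proof you would need to supply the entire middle step (identifying the surviving virtual $W$-character and the multiplicity statement); the shorter route is to invoke \cite[Proposition 6.2]{Ga6} directly on $\Theta(\wt{M}_\lambda)$ and then handle the $j$-induction bookkeeping, including the crucial vanishing for $\mu>\lambda$, which does not appear anywhere in your outline.
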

Here $\sigma^\msc{X}: W \to {\rm Perm}(\msc{X})$ is the permutation representation of $W$ on $\msc{X}$ via the twisted Weyl action, see \eqref{F:sigmaX}. The proof of Theorem \ref{T:main3} relies on the crucial fact that every nilpotent orbit of $\GL_r$ is of PL-type \`a la \cite{GoSa15}, i.e., it is the principal/regular orbit of a Levi subgroup. In the proof of Theorem \ref{T:main3}, we also use the result of Gomez--Gourevitch--Sahi \cite{GGS21} and some properties of the $j$-induction.

For general $\wt{G}^{(n)}$ we expect that whenever the orbit $\mca{O}_{\rm Spr}(j_{W_\tnu}^W \varepsilon_\tnu)$ is of PL-type, then \cite[(2.4)]{GaTs} (i.e, \eqref{E:main2} in Conjecture \ref{C:main} here)  can be checked by using similar analysis as in the case of $\wt{\GL}_r^{(n)}$. However, it seems to require new ideas to deal with the  case of non-PL-type orbits.

\subsection{Acknowledgement} We would like to thank Dima Gourevitch for several communications and some clarifications on \cite{GGS21}. The work of F. G. is partially supported by the National Key R{\&}D Program of China (No. 2022YFA1005300) and also by NNSFC-12171422. The work of B.\! L. is partially supported by the NSF Grants DMS-1702218, DMS-1848058. The work of W.-Y. Tsai is supported by the National Science and Technology Council of Taiwan  (108-2115-M-033-004-MY3 and 111-2115-M-033-001-MY3).

\section{Quasi-admissible and raisable orbits} \label{S:q-r}

\subsection{Covering groups}
Let $F$ be a number field or a local field. Let $\mbf{G}$ be a connected split linear reductive group over $F$. Denote by 
$$(X,\ \Phi, \ \Delta; \ Y, \Phi^\vee, \Delta^\vee)$$
 the root datum of $\mbf{G}$, where $X$ is the character lattice and $Y$ the cocharacter lattice of a split torus $\mbf{T}\subset \mbf{G}$. Here $\Delta$ is a choice of simple roots, and we denote by $Y^{sc} \subset Y$ the coroot lattice and $X^{sc} \subset X$ the root lattice. Denote by $W$ the Weyl group of the coroot system.

Let $$Q: Y \longrightarrow \Z$$ be a Weyl-invariant quadratic form, and let $B_Q$ be the associated bilinear form.
Assume that $F^\times$ contains the full group $\mu_n$ of $n$-th roots of unity.  Consider the pair $(D, \eta=\mbf{1})$, where $D$ is a ``bisector" of $Q$ (see \cite[\S 2.6]{GG}). If $F$ is a local field with $G_F:=\mbf{G}(F)$, then associated to $(D, \mbf{1})$ one has a covering group  $\wt{G}_F$, which is a central extension
$$\begin{tikzcd}
\mu_n \ar[r, hook] & \wt{G}_F \ar[r, two heads] & G_F
\end{tikzcd}$$
of $G$ by $\mu_n$. For simplicity, we may also write $\wt{G}:=\wt{G}_F$. If $F$ is a number field, then one has a global $n$-fold central cover $\wt{G}_{\mathbb{A}}$ of $G_{\mathbb{A}}$. For more details, see \cites{BD, GG, We6}.  Throughout, for every root $\alpha$ we denote
\begin{equation}\label{n_alpha}
    n_\alpha := \frac{n}{\gcd(n, Q(\alpha^\vee))}.
\end{equation}

\begin{dfn} \label{D:BDI}
Suppose $G_{der}$ is almost simple. Then the number
$$\BDI(\wt{G}):=Q(\alpha^\vee) \in \Z,$$
where $\alpha^\vee$ is any short coroot, is called the Brylinski--Deligne invariant associated with $\wt{G}$.
\end{dfn}

Note that the Brylinski--Deligne invariant does not depend on $n$. Moreover, it behaves functorially as follows. Let $\zeta: \mbf{G} \longrightarrow \mbf{H}$ be an algebraic group homomorphism. It induces a group homomorphism $\zeta^\natural: Y_G \longrightarrow Y_H$ on the cocharacter lattices of $\mbf{G}$ and $\mbf{H}$. Let $\wt{H}$ be an $n$-fold cover of $H$ associated with a quadratic form $Q_H$. The pull-back $n$-fold cover $\zeta^*(\wt{H})$ of $G$ via $\zeta$ is associated with $Q\circ \zeta^\natural$. In particular, we have
$$\BDI(\zeta^*(\wt{H})) = Q \circ \zeta^\natural(\alpha^\vee_G),$$
where $\alpha^\vee_G$ is any short coroot of $G$.

\subsection{Nilpotent orbits and Whittaker pairs}
Let $F$ be a number field or a local field.
For the terminology and notations in this subsection, we follow \cites{GGS17, GGS21}.
 In particular, the Lie algebra $\mfr{sl}_2$ over $F$ has a standard basis $\set{e_+, h_0, e_-}$. Recall that $\mfr{g}_F$ denotes the Lie algebra of $G_F$.

Let $u\in \mfr{g}_F$ be a nilpotent element. Given a semisimple element $h \in \mfr{g}_F$, one has a decomposition 
$$\mfr{g}_{F} = \bigoplus_{i \in I} \mfr{g}^h[i],$$
where $I \subset F$ and  $\mfr{g}^h[i]$ denotes the $i$-th eigenspace of the adjoint action of $h$ on $\mfr{g}_F$. The element $h\in \mfr{g}_F$ is called $\Q$-semisimple if 
$$I \subset \Q.$$
The pair $(h, u)$ is called a Whittaker pair if $h$ is $\Q$-semisimple and $u\in \mfr{g}^h[-2]$. A Whittaker pair is called a neutral pair if there exists a nilpotent element $f\in \mfr{g}_F$ such that 
$$(f, h, u)$$
is an $\mfr{sl}_2$-triple, in which case $h$ is called a neutral element for $u$. For any nilpotent $u$, the Jacobson--Morozov theorem gives a homomorphism 
$$\gamma: \mfr{sl}_2 \longrightarrow \mfr{g}_F$$
such that $u=\gamma(e_-)$. Conversely, naturally associated to any $\gamma$ is an $\mfr{sl}_2$-triple
$$\set{f, h, u} \subset \mfr{g}_F,$$
where $h$ is then a neutral element for $u$.

Let 
$$\kappa: \mfr{g}_F \times \mfr{g}_F \longrightarrow F $$
be the Killing form.  For any Whittaker pair $(h, u)$, one has a symplectic form
$$\omega_u: \mfr{g}_F \times \mfr{g}_F \longrightarrow F$$
given by $\omega_u(x, y):= \kappa(u, [x, y])$. For any rational $i\in \Q$, let
$$\mfr{g}_{\gest i}^h = \bigoplus_{i' \gest i} \mfr{g}^h[i'] \text{ and } \mfr{u}_h:=\mfr{g}_{\gest 1}^h.$$
The restriction $\omega_u|_{\mfr{u}_h}$ is well-defined and let $\mfr{n}_{h, u}$ be the radical of $\omega_u|_{\mfr{u}_h}$. Then
$$[\mfr{u}_h, \mfr{u}_h] \subset \mfr{g}_{\gest 2}^h \subset \mfr{n}_{h, u}.$$
By \cite[Lemma 3.2.6]{GGS17}, one has
$$\mfr{n}_{h, u} = \mfr{g}_{\gest 2}^h + \mfr{g}_1^h \cap \mfr{g}_u,$$
where $\mfr{g}_u$ denotes the centralizer of $u\in \mfr{g}$. If the Whittaker pair $(h, u)$ is a neutral pair, then $\mfr{n}_{h,u}=\mfr{g}^h_{\gest 2}$. 

Let $U_h=\exp(\mfr{u}_h)$ and $N_{h,u}=\exp(\mfr{n}_{h,u})$ be the corresponding unipotent subgroups of $G_F$. 
Let $$\psi: F \longrightarrow \C^\times$$
be a nontrivial character. Define a character of $N_{h,u}$ by
\begin{equation} \label{F:psi-u}
\psi_u(n)=\psi(\kappa(u,\log(n))).
\end{equation}
 Let $N_{h,u}' = N_{h,u} \cap {\rm Ker}(\psi_u)$. Then $U_h/N_{h,u}'$ is a Heisenberg group with center $N_{h,u}/N_{h,u}'$.
Every unipotent subgroup splits canonically into $\wt{G}$. Thus, we view all the above $U_h, N_{h, u}$ etc as subgroups of $\wt{G}$.

If $F$ is a local field and $\pi$ an irreducible genuine representation of $\wt{G}$, then the degenerate Whittaker model of $\pi$ associated to any Whittaker pair $(h, u)$ is
$$\pi_{N_{h,u}, \psi_u}:=\Hom_{N_{h, u}}(\pi, \psi_u),$$
the twisted Jacquet module of $\pi$ with respect to $(N_{h, u}, \psi_u)$. If $(h, u)$ is a neutral pair, then $\pi_{N_{h,u}, \psi_u}$ is also called a generalized Whittaker model of $\pi$. For any nilpotent orbit $\mca{O} \subset \mfr{g}_F$, we write
$$\pi_{N_\mca{O}, \psi_\mca{O}} = \pi_{N_{h,u}, \psi_u}$$
for any $u\in \mca{O}$ and any choice $(h, u)$ of neutral pair. Denote
$$\mca{N}_\Wh(\pi)=\set{\mca{O} \subset \mfr{g}_F: \ \pi_{N_\mca{O}, \psi_\mca{O}} \ne 0}$$
and let 
$$\mca{N}_\Wh^{\rm max}(\pi) \subset \mca{N}_\Wh(\pi)$$
 be the subset consisting of maximal elements.

Now assume $F$ is a number field and $u\in \mfr{g}_F$ . Let $\psi: F\backslash \mathbb{A} \to \C^\times$ be a nontrivial additive character. One can extend the Killing form to obtain $\kappa: \mfr{g}_{\mathbb{A}} \times \mfr{g}_{\mathbb{A}} \to {\mathbb{A}}_F$. This gives a character
$$\psi_u: N_{h, u}(\mathbb{A}) \to \C^\times$$
given by the same formula as in \eqref{F:psi-u}. Here $\psi_u$ is automorphic, i.e., trivial on $N_{h, u}(F)$ and hence can be viewed as a character on
$$[N_{h,u}]:=N_{h,u}(F)\backslash N_{h, u}(\mathbb{A}).$$
Let $(\pi, V_\pi)$ be an irreducible genuine automorphic representation of $\wt{G}_\mathbb{A}$. For any $\phi \in V_\pi$, the degenerate Whittaker--Fourier coefficient of $\phi$ attached to a Whittaker pair $(h, u)$ is given by
\begin{equation} \label{F:F-co}
\mca{F}_{h,u}(\phi)(g):=\int_{[N_{h,u}]} \phi(ng) \psi_u^{-1}(n) dn.
\end{equation}
For any Whittaker pair $(h, u)$, we denote
$$\mca{F}_{h,u}(\pi)=\set{ \mca{F}_{h,u}(\phi): \ \phi \in V_\pi }.$$
If $(h, u)$ is a neutral pair, then again $\mca{F}_{s,u}(\phi)$ is called a generalized Whittaker--Fourier coefficient of $\phi$.
For any orbit $\mca{O} \subset \mfr{g}_F$, we write
$$\mca{F}_\mca{O}(\phi)= \mca{F}_{h,u}(\phi)$$
for any neutral pair $(h, u)$ with $u\in \mca{O}$. This gives $\mca{F}_\mca{O}(\pi) = \set{ \mca{F}_\mca{O}(\phi): \phi\in V_\pi }$ and
$$\mca{N}_\Wh(\pi)=\set{ \mca{O}\subset \mfr{g}_F: \ \mca{F}_\mca{O}(\pi)\ne 0}.$$
Similarly, we have the subset of maximal elements $\mca{N}_\Wh^{\rm max}(\pi) \subset \mca{N}_\Wh(\pi)$, for which there are many results, see for example \cites{Gin0, JLS}. Since the set $\mca{N}_{\rm Wh}^{\rm max}(\pi)$ for $\pi=\otimes_v \pi_v$ is partially constrained by $\mca{N}_{\rm Wh}^{\rm max}(\pi_v)$ for any place $v$ of $F$, we focus in this paper only on $p$-adic local places. Henceforth, we assume that $F$ is a $p$-adic local field.

\subsection{$F$-split nilpotent orbits}
Let $\gamma=(f, h, u) \subset \mfr{g}_F$ be an $\mfr{sl}_2$-triple. Let $\mca{O}_u \subset \mfr{g}_F$. Viewing $\gamma \subset \mfr{g}_{F^{\rm al}}$, one has
$$\mbf{G}_u = \mbf{G}_\gamma \cdot \mbf{N}_u,$$
where $\mbf{G}_\gamma \subset \mbf{G}$ is the stabilizer subgroup of $\gamma$. The group $\mbf{G}_\gamma$ is a (possibly disconnected) linear algebraic group and is the reductive part of $\mbf{G}_u$. In general, we have 
$$(\mca{O}_u \otimes F^{\rm al}) \cap \mfr{g}_F = \bigsqcup_{i \in I} \mca{O}_{u_i},$$
where the $F$-rational orbits $\mca{O}_{u_i}$ are classified by the Galois cohomology group
$$H^1(F, \mbf{G}_\gamma),$$
see \cite[Chap. III]{Ser-gc} and \cite[\S 4]{Nev02}. This group is only a pointed set if $\mbf{G}_\gamma$ is not abelian.

Consider the connected component group
$$\pi_0(\mbf{G}_\gamma) = \mbf{G}_\gamma/\mbf{G}_{\gamma, 0}$$
of $\mbf{G}_\gamma$. This group is one of the symmetric groups $S_i, 1\lest i \lest 5$; moreover, for classical groups it is either trivial or $S_{2}$.
One has an exact sequence of pointed sets
$$\begin{tikzcd}
H^1(F, \mbf{G}_{\gamma, 0}) \ar[r] & H^1(F, \mbf{G}_\gamma) \ar[r, "\iota"] & H^1(F, \pi_0(\mbf{G}_\gamma)) \ar[r] & \cdots.
\end{tikzcd}$$
In particular, if $\mbf{G}_{\gamma, 0}$ is semisimple and simply-connected, then $\iota$ is injective.

The orbit $\mca{O}_u$ is called an $F$-split nilpotent orbit if the reductive group $\mbf{G}_\gamma$ is split over $F$. In this paper, we only consider $F$-split orbits, since the computation with covers of a non-split $\mbf{G}_\gamma$  involves further subtlety. Henceforth, we will assume that the orbit $\mca{O} \subset \mfr{g}_F$ is always $F$-split without explicating this again.

In this paper, we will implement the methods of M\oe glin \cite{Moe96}, Nevins \cites{Nev99, Nev02}, Jiang--Liu--Savin \cite{JLS}  and Gomez--Gourevitch--Sahi \cite{GGS21} to give some necessary or sufficient condition for an orbit $\mca{O}_u$ to possibly lie in the wavefront set of some genuine representation $\pi \in \Irrg(\wt{G})$. In fact, for general $\wt{G}$ we determine (if possible) the $F$-split orbits of $\mfr{g}_F$ that can not be the wavefront set of any genuine representation $\pi$. More precisely, we consider necessary conditions for an $F$-split orbit $\mca{O}_u$ to be quasi-admissible in the sense of \cite{GGS21}, and also give sufficient conditions for it to be raisable, a notion due to \cite{JLS}. All these together will enable us to determine the subset of the orbits in $\mca{N}$ which never occur in the wavefront set of any $\pi \in \Irrg(\wt{G})$.

Some results here also hold in the global setting for genuine automorphic representations of $\wt{G}_{\mathbb{A}}$.

\subsection{$\wt{G}^{(n)}$-quasi-admissible orbits}
Recall that by restriction, one has a non-degenerate symplectic form
$$\omega_u: \mfr{g}[1] \times \mfr{g}[1] \longrightarrow F.$$
The group $G_\gamma = \mbf{G}_\gamma(F)$ acts on $\mfr{g}[1]$ and preserves the form $\omega_u$. Thus, one has a natural group homomorphism
$$\phi: G_\gamma \longrightarrow \Sp(\mfr{g}[1]).$$
By pull-back of the metaplectic double cover $\Mp(\mfr{g}[1])$ of $\Sp(\mfr{g}[1])$ via $\phi$, one has a double cover 
$$\wt{G}_\gamma^{(2), \phi} \onto G_\gamma.$$
Here the metaplectic group $\Mp(\mfr{g}[1])$ is uniquely determined by its Brylinski--Deligne invariant
$$\BDI(\Mp(\mfr{g}[1]))=1.$$

The inclusion $G_\gamma \subset G$ gives an inherited covering $\wt{G}_\gamma^{(n)}$. We obtain the fiber product $\wt{G}_\gamma^{(n)} \times_{G_\gamma} \wt{G}_\gamma^{(2), \phi} $ as in the following diagram
$$\begin{tikzcd}
\wt{G}_\gamma^{(n)} \times_{G_\gamma} \wt{G}_\gamma^{(2), \phi} \ar[r]  \ar[d, "q_\gamma"] & \wt{G}_\gamma^{(2), \phi} \ar[r]  \ar[d, two heads] & \Mp(\mfr{g}[1])  \ar[d, two heads] \\
\wt{G}_\gamma^{(n)} \ar[r, two heads] \ar[d, hook] & G_\gamma \ar[r, "\phi"] \ar[d, hook] & \Sp(\mfr{g}[1]) \\
\wt{G}^{(n)} \ar[r, two heads] & G.
\end{tikzcd}$$

Write 
$$G_{\gamma, der} \subset G_\gamma$$
 for the subgroup generated by unipotent elements; it is equal to the derived subgroup of $G_{\gamma, 0}:=\mbf{G}_{\gamma, 0}(F) \subset G_\gamma$. Thus, we have the inclusions
$$G_{\gamma, der} \subset G_{\gamma, 0} \subset G_\gamma.$$
Let $\mbf{G}_{\gamma, der} \subset \mbf{G}_{\gamma}$ be the derived subgroup and let $\mbf{f}: \mbf{G}_{\gamma, sc} \onto \mbf{G}_{\gamma, der}$ be the simply-connected cover. Setting $G_{\gamma, sc}:=\mbf{G}_{\gamma, sc}(F)$, the map $\mbf{f}$ induces a map 
\begin{equation} \label{F:f}
f: G_{\gamma, sc} \onto G_{\gamma, der} \into \mbf{G}_{\gamma, der}(F),
\end{equation}
see \cite[\S 6]{BoTi73}. In particular, the inclusion in \eqref{F:f} may not be an equality in general.

Henceforth, we write $\wt{G}_{\gamma}^{(n, 2)}:= \wt{G}_{\gamma}^{(n)} \times_{G_{\gamma}} \wt{G}_{\gamma}^{(2), \phi}$. Similarly, for $\star \in \set{0, der, sc}$, one has the natural pull-back $\wt{G}_{\gamma, \star}^{(n)}$ and $\wt{G}_{\gamma, \star}^{(2), \phi}$, and we set
$$\wt{G}_{\gamma, \star}^{(n, 2)}:= \wt{G}_{\gamma, \star}^{(n)} \times_{G_{\gamma, \star}} \wt{G}_{\gamma, \star}^{(2), \phi}.$$
One has by construction the following diagram
\begin{equation} \label{CD:01}
\begin{tikzcd}
\wt{G}_{\gamma, \star}^{(n, 2)} \ar[r]  \ar[d] \ar[rd, two heads, "{p_{\gamma, \star}}"] & \wt{G}_{\gamma, \star}^{(2), \phi}   \ar[d, two heads] \\
\wt{G}_{\gamma,\star}^{(n)} \ar[r, two heads]  & G_{\gamma,\star},
\end{tikzcd}
\end{equation}
where $p_{\gamma, \star}$ is the canonical quotient map.

\begin{dfn} 
A representation of a group $H$ with $\mu_n \times \mu_2 \subset Z(H)$ is called  $(n, 2)$-genuine if the central subgroups $\mu_n $ and $\mu_2$ both act faithfully. An $F$-split nilpotent orbit $\mca{O}  = \mca{O}_u \subset \mfr{g}$ is called 
\begin{enumerate}
\item[--] $\wt{G}^{(n)}$-admissible if the map $p_{\gamma, 0}: \wt{G}_{\gamma, 0}^{(n, 2)} \onto G_{\gamma, 0}$ in \eqref{CD:01} splits;
\item[--] $\wt{G}^{(n)}$-quasi-admissible if $\wt{G}_\gamma^{(n, 2)}$ admits a finite dimensional $(n, 2)$-genuine representation.
%
%\item[--] $\wt{G}^{(n)}$-pseudo-admissible if the map $p_{\gamma, sc}^D$ in \eqref{CD:pDsc} splits.
\end{enumerate}
\end{dfn}

For simplicity, we may just use admissibility and quasi-admissibility when the underlying covering group is clear from the context. It is clear that 
$$ \text{admissible} \Longrightarrow \text{quasi-admissible}$$
for $F$-split orbits $\mca{O}$. To understand further the quasi-admissibility,  we denote by $\left(\wt{G}_{\gamma}^{(n, 2)}\right)_+ \subset \wt{G}_{\gamma}^{(n, 2)}$ the subgroup generated by unipotent elements. For $\star\in \set{0, der}$, one has a commutative diagram:
\begin{equation} \label{CD:pD}
\begin{tikzcd}
\mu_n \times \mu_2 \ar[r, hook] & \wt{G}_{\gamma, \star}^{(n, 2)} \ar[r, two heads, "{p_{\gamma, \star}}"] & G_{\gamma, \star} \\
\Ker(p_{\gamma}^D) \ar[u, hook, "{\iota^D}"] \ar[r, hook] & \left(\wt{G}_{\gamma}^{(n, 2)}\right)_+ \ar[u, hook] \ar[r, two heads, "{p_{\gamma}^D}"]  & G_{\gamma, der} \ar[u, hook] .
\end{tikzcd}
\end{equation}
Here $p_\gamma^D$ is surjective since $G_{\gamma, der}$ is generated by unipotent elements. Note that $\iota^D$ may not be surjective in general. 

\begin{lm} \label{L:qadm}
Let $\mca{O}=\mca{O}_u$ be an $F$-split $\wt{G}^{(n)}$-quasi-admissible orbit.
\begin{enumerate}
\item[(i)] If $n$ is odd, then $\Ker(p_\gamma^D)=\set{1}$ and thus $p_{\gamma, 0}$ splits over $G_{\gamma, \der}$.
\item[(ii)] If $n=2m$ is even, then either $\Ker(p_\gamma^D) = \set{1}$ or $\Ker(p_\gamma^D) = \Delta(\mu_2)$, where $\Delta: \mu_2 \into \mu_n \times \mu_2$ is the diagonal embedding. 
% Moreover, if $\Ker(p_\gamma^D) = \Delta(\mu_2)$, then both $\left(\wt{G}_{\gamma}^{(n)}\right)_{der}$ and $\left(\wt{G}_{\gamma}^{(2), \phi}\right)_{der}$ are non-split double cover of $G_{\gamma, \der}$.
\end{enumerate}
\end{lm}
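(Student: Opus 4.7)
The plan is to first locate $\Ker(p_\gamma^D)$ inside the central kernel of the cover, and then to exploit the finite-dimensional $(n,2)$-genuine representation provided by quasi-admissibility to constrain its possible subgroups. First, $\Ker(p_\gamma^D) \subset \mu_n \times \mu_2$ because any element of $\left(\wt{G}_\gamma^{(n,2)}\right)_+$ that maps to the identity under $p_\gamma^D$ also maps to the identity in $G_\gamma$ via the inclusion $\left(\wt{G}_\gamma^{(n,2)}\right)_+ \into \wt{G}_\gamma^{(n,2)}$ composed with the full covering map, hence lies in the kernel $\mu_n \times \mu_2$ of the total covering. Thus $\Ker(p_\gamma^D)$ is a central finite subgroup. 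Next, by quasi-admissibility, fix a finite-dimensional $(n,2)$-genuine representation $\rho : \wt{G}_\gamma^{(n,2)} \to \GL(V)$; passing to a $(\mu_n \times \mu_2)$-isotypic summand, I may assume $\mu_n \times \mu_2$ acts via a single $(n,2)$-genuine character $\chi(\zeta_n^i, \epsilon) = \zeta_n^{ai} \cdot \epsilon$ with $\gcd(a, n) = 1$, so that $\Ker(p_\gamma^D)$ acts on this summand by the scalar character $\chi|_{\Ker(p_\gamma^D)}$.

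The key step is to show that $\chi|_{\Ker(p_\gamma^D)}$ is trivial, i.e.\ that $\Ker(p_\gamma^D) \subset \Ker(\chi)$. The subgroup $\left(\wt{G}_\gamma^{(n,2)}\right)_+$ is generated by canonical lifts $\tu \in \wt{G}_\gamma^{(n,2)}$ of unipotent elements $u \in G_{\gamma, \de}$, obtained from the canonical unipotent splittings of both $\wt{G}_\gamma^{(n)}$ and $\wt{G}_\gamma^{(2), \phi}$. Pulling back through $\mbf{f} : G_{\gamma, sc} \onto G_{\gamma, \de}$ from \eqref{F:f} yields a central extension of the perfect group $G_{\gamma, sc}$ by $\Ker(p_\gamma^D)$, carrying canonical lifts of every one-parameter unipotent subgroup $U_\alpha \cong (F, +)$; composing with $\rho$ gives a projective representation of $G_{\gamma, sc}$ whose cocycle is $\chi|_{\Ker(p_\gamma^D)}$. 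Unique divisibility of the $U_\alpha$, together with the requirement that the Steinberg relations of $G_{\gamma, sc}$ be satisfied in $\GL(V)$, forces the cocycle to be trivial, yielding $\chi|_{\Ker(p_\gamma^D)} = 1$. I then read off the possibilities: for $n$ odd, $\mu_n \cap \mu_2 = \set{1}$ in $\C^\times$ so $\chi$ is injective on $\mu_n \times \mu_2$, giving $\Ker(p_\gamma^D) = \set{1}$; for $n = 2m$ even, any $a$ coprime to $n$ is odd, so $a \cdot n/2 \equiv n/2 \pmod{n}$, and a direct calculation yields $\Ker(\chi) = \set{(1, 1), (\zeta_n^{n/2}, -1)} = \Delta(\mu_2)$ independently of $a$, whence $\Ker(p_\gamma^D)$ is either $\set{1}$ or $\Delta(\mu_2)$.

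For the splitting assertion in (i), when $\Ker(p_\gamma^D) = \set{1}$ the map $p_\gamma^D$ is an isomorphism $\left(\wt{G}_\gamma^{(n,2)}\right)_+ \xrightarrow{\sim} G_{\gamma, \de}$, whose inverse, composed with the inclusion $\left(\wt{G}_\gamma^{(n,2)}\right)_+ \into \wt{G}_{\gamma, 0}^{(n,2)}$, is the desired section of $p_{\gamma, 0}$ over $G_{\gamma, \de}$. The hardest part of this plan is the third step: converting the structural fact that $\left(\wt{G}_\gamma^{(n,2)}\right)_+$ is generated by canonical unipotent lifts into a concrete constraint on $\chi|_{\Ker(p_\gamma^D)}$; this rests on the divisibility of the $U_\alpha$ and on the Steinberg presentation of $G_{\gamma, sc}$, both needed to rule out a nontrivial projective-representation twist by $\chi|_{\Ker(p_\gamma^D)}$.
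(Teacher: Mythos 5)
Your overall structure matches the paper's: reduce the question to showing that $\Ker(p_\gamma^D)$ acts trivially in a finite-dimensional $(n,2)$-genuine representation, then read off the two possibilities from the shape of the genuine character on $\mu_n\times\mu_2$. The final read-off computation is correct. But the crucial middle step has a genuine gap.

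The paper establishes $\Ker(p_\gamma^D)\subset\Ker(\sigma)$ by quoting \cite[Lemma 4.5]{GGS21}, which says precisely that $\left(\wt{G}_\gamma^{(n,2)}\right)_+\subset\Ker(\sigma)$ for any finite-dimensional \emph{smooth} representation $\sigma$; since $\Ker(p_\gamma^D)\subset\left(\wt{G}_\gamma^{(n,2)}\right)_+$ (the left column of \eqref{CD:pD}), the conclusion is immediate. You instead attempt to re-derive this vanishing from scratch, and the argument you give does not hold up. First, you never invoke smoothness of $\rho$, yet without it the claim is false: what one actually needs is that the kernel of $\rho$ restricted to each canonically lifted root group $U_\alpha\simeq(F,+)$ is an \emph{open} subgroup, which is then promoted to all of $U_\alpha$ by conjugating with the torus (the relation $h\,u_\alpha(t)\,h^{-1}=u_\alpha(\alpha(h)t)$). ``Unique divisibility of $U_\alpha$'' by itself does not accomplish this. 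Second, the framing ``Steinberg relations force the cocycle to be trivial'' is misleading and, if taken literally, wrong: nontrivial elements of $H^2(G_{\gamma,sc},\mu_n)$ pushed out along a genuine character are exactly how metaplectic covers arise, so the Steinberg presentation of $G_{\gamma,sc}$ certainly does not force such a pushed-out cocycle to vanish. The honest assertion is not about the cocycle class at all; it is that the concrete elements of $\Ker(p_\gamma^D)\subset\left(\wt{G}_\gamma^{(n,2)}\right)_+$ lie in $\Ker(\rho)$ because the whole subgroup generated by (canonically lifted) unipotents does, and that is precisely what \cite[Lemma 4.5]{GGS21} supplies. Either cite that lemma as the paper does, or spell out the smoothness-plus-torus-conjugation argument that proves it; the appeal to divisibility and Steinberg relations as stated is not a proof.
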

\begin{proof}
Let $\sigma$ be a finite-dimensional $(n, 2)$-genuine representation of $\wt{G}_{\gamma}^{(n, 2)}$, viewed as a representation of $\wt{G}_{\gamma, 0}^{(n, 2)}$ by restriction. By \cite[Lemma 4.5]{GGS21} (see also the proof of Proposition 6.4 in loc. cit.), we have
$$\left(\wt{G}_{\gamma}^{(n, 2)}\right)_+ \subset \Ker(\sigma).$$
In particular, $\Ker(p_\gamma^D) \subset \Ker(\sigma)$. But since $\sigma$ is $(n, 2)$-genuine, this immediately gives the result.
\end{proof}

Now we set 
$$n^* = {\rm lcm}(n, 2)$$
and consider the multiplication map 
\begin{equation} \label{F:mfr-m}
\mfr{m}: \mu_n \times \mu_2 \longrightarrow \mu_{n^*}.
\end{equation}
By push-out via $\mfr{m}$, we obtain the following
\begin{equation} \label{CD:q-adm}
\begin{tikzcd}
\mu_n \times \mu_2 \ar[r, hook] \ar[d, "{\mfr{m}}"] & \wt{G}_{\gamma, der}^{(n, 2)} \ar[r, two heads, "{p_{\gamma, der}}"] \ar[d] & G_{\gamma, der} \ar[d, equal] \\
\mu_{n^*}  \ar[r, hook] & \mfr{m}_*\left(\wt{G}_{\gamma, der}^{(n, 2)}\right) \ar[r, two heads, "{p_{\gamma, der}^\mfr{m}}"]  & G_{\gamma, der}.
\end{tikzcd}
\end{equation}

Assume that $G_{\gamma} = \prod_{j \in J} G_{\gamma, j}$, where $G_{\gamma, j, der}$ is almost simple for every $j$. Then one has 
\begin{equation}\label{CD:q-adm-j}
\begin{tikzcd}
\mu_{n^*}  \ar[r, hook] & \mfr{m}_*\left(\wt{G}_{\gamma, j, 0}^{(n, 2)}\right) \ar[r, two heads, "{p_{\gamma, j, 0}^\mfr{m}}"]  & G_{\gamma, j, 0} \\
\mu_{n^*}  \ar[r, hook] \ar[u, equal] & \mfr{m}_*\left(\wt{G}_{\gamma, j, der}^{(n, 2)}\right) \ar[r, two heads, "{p_{\gamma, j, der}^\mfr{m}}"]  \ar[u, hook] & G_{\gamma, j, der} \ar[u, hook]
\end{tikzcd}
\end{equation}
for every $j \in J$. In general, the natural set-theoretic map 
$$\prod_j \wt{G}_{\gamma, j}^{(n, 2)} \longrightarrow \wt{G}_{\gamma}^{(n, 2)}$$
may not be a group homomorphism, i.e., block commutativity may fail. However, for the derived subgroup, we have the natural group isomorphism
$$(\prod_j \wt{G}_{\gamma, j, der}^{(n, 2)})/K \simeq \wt{G}_{\gamma, der}^{(n,2)},$$
where $K=\set{(\zeta_j, \xi_j) \in (\mu_n \times \mu_2)^{\val{J}}: \ \prod_j (\zeta_j, \xi_j) = (1, 1)}$.
Indeed, if $Z_j$ denotes the cocharacter lattice of $G_{\gamma, j, der}$ and $Z$ that for $G_{\gamma, der}$. Then any Weyl-invariant quadratic form $Q$ on $Z$ decomposes as $Q = \bigoplus_j Q|_{Z_j}$. This shows that $\wt{G}_{\gamma, j, der}^{(n)}$ commutes with each other in $\wt{G}_{\gamma, der}^{(n)}$, and similarly for $\wt{G}_{\gamma, j, der}^{(2), \phi}$; thus, so does the $\wt{G}_{\gamma, j, der}^{(n, 2)}$.

\begin{prop} \label{P:qadm} 
Keep notations as above. Then the following are equivalent:
\begin{enumerate}
\item[(i)] The $F$-split orbit $\mca{O}$ is quasi-admissible.
\item[(ii)] The map $p_{\gamma, der}^\mfr{m}$ in \eqref{CD:q-adm} splits.
\item[(iii)] For every $j$, the map $p_{\gamma, j, der}^\mfr{m}$ in \eqref{CD:q-adm-j} splits.
\item[(iv)] For every $j$, the cover $\mfr{m}_*\left(\wt{G}_{\gamma, j, 0}^{(n, 2)}\right)$ in \eqref{CD:q-adm-j} has a  finite dimensional $\mu_{n^*}$-genuine representation.
\end{enumerate}
\end{prop}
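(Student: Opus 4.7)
The plan is to show $(i) \Leftrightarrow (ii) \Leftrightarrow (iii) \Leftrightarrow (iv)$, using Lemma \ref{L:qadm} together with the standard dictionary for $p$-adic covers between splittings over the derived subgroup and the existence of finite-dimensional genuine representations.

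For $(i) \Leftrightarrow (ii)$, the first key observation is that any $(n,2)$-genuine representation $\sigma$ of $\wt{G}_\gamma^{(n,2)}$ is automatically trivial on $\ker(\mfr{m}) \subset \mu_n \times \mu_2$: when $n$ is odd this is vacuous since $\mfr{m}$ is an isomorphism, and when $n$ is even the central character of any genuine irreducible constituent of $\sigma$ satisfies $\chi_\sigma(-1,-1) = (-1)(-1) = 1$, while $\ker(\mfr{m}) = \{(1,1), (-1,-1)\}$. Hence $\sigma$ descends to a $\mu_{n^*}$-genuine representation of $\mfr{m}_*(\wt{G}_\gamma^{(n,2)})$. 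Combined with the fact recorded in the proof of Lemma \ref{L:qadm} that this representation is trivial on $\bigl(\wt{G}_\gamma^{(n,2)}\bigr)_+$, whose image covers $G_{\gamma,der}$, the restriction to $\mfr{m}_*(\wt{G}_{\gamma,der}^{(n,2)})$ factors through a section of $p_{\gamma,der}^\mfr{m}$. For the converse, a splitting $s$ of $p_{\gamma,der}^\mfr{m}$ identifies $\mfr{m}_*(\wt{G}_\gamma^{(n,2)})/s(G_{\gamma,der})$ with a central $\mu_{n^*}$-extension of the (possibly disconnected) reductive quotient $G_\gamma/G_{\gamma,der}$; its identity component is a torus which admits a finite-dimensional $\mu_{n^*}$-genuine character, and extending through the finite component group followed by pullback produces a finite-dimensional $(n,2)$-genuine representation of $\wt{G}_\gamma^{(n,2)}$.

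For $(ii) \Leftrightarrow (iii)$, use the block isomorphism $\wt{G}_{\gamma,der}^{(n,2)} \simeq \bigl(\prod_j \wt{G}_{\gamma,j,der}^{(n,2)}\bigr)/K$ recalled just before the proposition: a global splitting restricts factor-by-factor, and factor-wise splittings assemble consistently since the block covers pairwise commute, with descent modulo $K$ being automatic because the defining relations of $K$ lie in $\ker(\mfr{m})$ after pushout. For $(iii) \Leftrightarrow (iv)$, apply the standard fact that for a split connected reductive $p$-adic group $H$ (so that $H_{der}$ is generated by its root subgroups and coincides with $H_+$), any finite-dimensional smooth representation of a $\mu_k$-cover $\wt{H}^{(k)}$ factors through the abelian quotient $\wt{H}^{(k)}/\bigl(\wt{H}^{(k)}\bigr)_+$; a $\mu_k$-genuine representation therefore exists precisely when $\mu_k \cap \bigl(\wt{H}^{(k)}\bigr)_+ = 1$, i.e., precisely when the cover splits over $H_{der}$. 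Taking $H = G_{\gamma,j,0}$ and $k = n^*$, and observing that the restriction of $\mfr{m}_*(\wt{G}_{\gamma,j,0}^{(n,2)})$ to $G_{\gamma,j,der}$ is exactly $\mfr{m}_*(\wt{G}_{\gamma,j,der}^{(n,2)})$, yields the equivalence.

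The main obstacle I anticipate is the converse direction in $(i) \Leftrightarrow (ii)$: building a genuine representation of $\wt{G}_\gamma^{(n,2)}$ starting only from a splitting over $G_{\gamma,der}$. Here one must handle the disconnectedness of $G_\gamma$ carefully, since the component group $\pi_0(\mbf{G}_\gamma)$ can be any $S_i$ with $i \lest 5$ and is in general nonabelian; one has to verify that an appropriate genuine character on the identity component of $G_\gamma/G_{\gamma,der}$ can be extended (or induced) to the full quotient in such a way that the resulting representation remains $(n,2)$-genuine after undoing the pushout by $\mfr{m}$. The remaining steps are largely bookkeeping, modulo the standard $p$-adic structure results invoked above.
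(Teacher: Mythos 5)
Your proposal is correct and tracks the paper's own argument in its essential structure: both reduce quasi-admissibility to a splitting question via the pushout by $\mfr{m}$, use the fact that any finite-dimensional $(n,2)$-genuine representation kills $\bigl(\wt{G}_\gamma^{(n,2)}\bigr)_+$, and invoke the block decomposition over $G_{\gamma,der}$ for the equivalence of (ii) and (iii). For (i)$\Rightarrow$(ii) you replace the paper's two-case analysis via Lemma~\ref{L:qadm} (namely, $\Ker(p_\gamma^D)$ is trivial or $\Delta(\mu_2)$) with the observation that any $(n,2)$-genuine central character is automatically trivial on $\ker(\mfr{m})$; these are equivalent, and yours is arguably the cleaner phrasing. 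For (ii)$\Rightarrow$(i) the paper simply invokes \cite[Proposition 6.4]{GGS21}, whereas you construct the genuine representation directly from a splitting $s$ over $G_{\gamma,der}$; for this you tacitly use that $s(G_{\gamma,der})$ is normal in $\mfr{m}_*\bigl(\wt{G}_\gamma^{(n,2)}\bigr)$, which follows from uniqueness of the section (because $G_{\gamma,der}$, a quotient of $G_{\gamma,sc}(F)$, is perfect over a $p$-adic field) and should be stated.

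A few small inaccuracies to correct, none fatal. In the (iii)$\Leftrightarrow$(iv) step you call $\wt{H}^{(k)}/\bigl(\wt{H}^{(k)}\bigr)_+$ ``the abelian quotient''; it is a central $\mu_k$-extension of the abelian group $H/H_{der}$, hence nilpotent of class at most $2$ but in general not abelian (the covering torus is typically a Heisenberg group). The conclusion that a $\mu_k$-genuine finite-dimensional representation exists if and only if $\mu_k \cap \bigl(\wt{H}^{(k)}\bigr)_+ = 1$ still holds, but the ``if'' direction requires producing a genuine finite-dimensional \emph{representation} of this nilpotent quotient rather than a character, which is precisely what \cite[Proposition 6.4]{GGS21} supplies. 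The same correction applies to your (ii)$\Rightarrow$(i) argument: the $\mu_{n^*}$-extension of the torus part need not admit a genuine character, only a finite-dimensional genuine representation, which suffices. Finally, the remark that ``the defining relations of $K$ lie in $\ker(\mfr{m})$ after pushout'' in the (ii)$\Leftrightarrow$(iii) step is not needed and is not quite accurate: since $G_{\gamma,der} = \prod_j G_{\gamma,j,der}$ is a direct product of groups, assembling the factor-wise sections into a global section is immediate and no descent is involved.
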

\begin{proof} 
Let $\mca{O}$ be as given.  The equivalence between (ii) and (iii) is clear in view of the preceding discussion. We first consider the equivalence between (i) and (ii). 

Assuming (i), there are two cases as from Lemma \ref{L:qadm}:
\begin{enumerate}
\item[$\bullet$] If ${\rm Ker}(p_\gamma^D)=\set{1}$, then $p_{\gamma, der}$ splits over $G_{\gamma, der}$ and thus $p_{\gamma, der}^\mfr{m}$ splits as well.
\item[$\bullet$] If ${\rm Ker}(p_\gamma^D) = \Delta(\mu_2)$, then necessarily $2|n$ and $n^*=n$. In this case, by pushing out $\left( \wt{G}_\gamma^{(n,2)} \right)_{+}$ via $\iota^D$, we obtain an isomorphism of extensions from the bottom two lines as in
$$\begin{tikzcd}
\Ker(p_{\gamma}^D) \ar[d, hook, "{\iota^D}"] \ar[r, hook] & \left(\wt{G}_{\gamma}^{(n, 2)}\right)_+ \ar[d, hook] \ar[r, two heads, "{p_{\gamma}^D}"]  & G_{\gamma, der} \ar[d, equal] \\
 \mu_n \times \mu_2 \ar[r, hook] &  \iota^D_*\left( \wt{G}_\gamma^{(n,2)} \right)_+ \ar[r, two heads] & G_{\gamma, der} \\
 \mu_n \times \mu_2 \ar[u, equal]  \ar[r, hook] &  \wt{G}_{\gamma, der}^{(n, 2)} \ar[u, "\simeq"] \ar[r, two heads] & G_{\gamma, der} \ar[u, equal].
\end{tikzcd}$$
Since in this case ${\rm Ker}(\mfr{m}) ={\rm Im}(\iota^D)$, we see that one has a retraction of the short exact sequence
$$\mu_{n^*} \into \mfr{m}_*\left( \wt{G}_{\gamma, der}^{(n, 2)} \right) \onto G_{\gamma, der},$$
which then gives a splitting of $p_{\gamma, der}^\mfr{m}$.
\end{enumerate}
Now, assuming that $p_{\gamma, der}^\mfr{m}$ splits, we have a homomorphism $s$ such that
$$\begin{tikzcd}
\mu_n \times \mu_2 \ar[d] \ar[r, hook]  & \wt{G}_{\gamma, der}^{(n, 2)}/\left( \wt{G}_\gamma^{(n, 2)} \right)_{+}  \ar[ld, "s"]\\
\mu_{n^*}
\end{tikzcd}$$
commutes. This implies in particular that
$$\begin{tikzcd}
\mu_n \times \mu_2 \ar[d, "\mfr{m}"] \ar[r, "\iota"]  & (\mu_n \times \mu_2)/{\rm Ker}(p_\gamma^D)  \ar[ld, "s"]\\
\mu_{n^*}
\end{tikzcd}$$
commutes. Since $\mfr{m}$ is injective on $\mu_n$ and also on $\mu_2$, we see that $\iota$ is injective on $\mu_n$ and on $\mu_2$ as well. Thus, one has a finite dimensional $(n, 2)$-genuine representation of $\wt{G}_{\gamma, der}^{(n, 2)}$ and also of $\wt{G}_{\gamma,0}^{(n, 2)}$ (see \cite[Proposition 6.4]{GGS21}). 

This equivalence between (iii) and (iv) follows an analogous argument for that of (i) and (ii). This completes the proof.
\end{proof}

In view of Proposition \ref{P:qadm}, we assume now that $G_{\gamma, der}$ is almost simple and give an explicit condition for the splitting of $p_{\gamma, der}^\mfr{m}$. Consider the Brylinski--Deligne invariant (see Definition \ref{D:BDI})
\begin{equation}\label{Q1Q2}
    Q_1:=\BDI(\wt{G}_{\gamma, 0}^{(n)}), \quad Q_2:=\BDI(\wt{G}_{\gamma, 0}^{(2), \phi})
\end{equation}
associated to $\wt{G}_{\gamma, 0}^{(n)}$ and  $\wt{G}_{\gamma, 0}^{(2), \phi}$ respectively. Then we can view $\wt{G}_{\gamma, 0}^{(n)}$ as an $n^*$-fold cover associated with $(n^*/n)Q_1$, and view $\wt{G}_{\gamma, 0}^{(2), \phi}$ as $n^*$-fold cover associated with $(n^*/2)Q_2$. Thus, we see that
$$\mfr{m}_*\left(\wt{G}_{\gamma, der}^{(n, 2)}\right)$$
is an $n^*$-fold cover with Brylinski--Deligne invariant 
$$(n^*/n)Q_1 + (n^*/2)Q_2.$$

\begin{prop} \label{P:exp-spl}
Let $Q_1 \in \Z$ (resp. $Q_2\in \Z$) be the Brylinski--Deligne invariant of $\wt{G}_{\gamma, 0}^{(n)}$ (resp. $\wt{G}_{\gamma, 0}^{(2), \phi}$).  Assume that n is coprime to the size  of ${\rm Ker}(\mbf{f}: \mbf{G}_{\gamma, sc} \onto \mbf{G}_{\gamma, \der})$.
Then $p_{\gamma, der}^\mfr{m}$ splits if and only if
$$n^* \text{ divides }  (n^*/n)Q_1 + (n^*/2)Q_2,$$
or equivalently,
\begin{enumerate}
\item[(i)] $n|Q_1$ when $2|Q_2$, and
\item[(ii)] $n/\gcd(n, Q_1)=2$ when $2\nmid Q_2$. 
\end{enumerate}
\end{prop}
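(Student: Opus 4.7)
The plan is to translate the splitting of $p_{\gamma, der}^{\mfr{m}}$ into a Brylinski--Deligne invariant divisibility condition by passing to the simply-connected cover of $G_{\gamma, der}$ and then descending under the coprimality hypothesis.

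The first step is immediate from the discussion preceding the proposition: $\mfr{m}_*\bigl(\wt{G}_{\gamma, der}^{(n,2)}\bigr)$ is an $n^*$-fold cover of the almost simple group $G_{\gamma, der}$ with Brylinski--Deligne invariant
$$Q \;:=\; (n^*/n)\, Q_1 + (n^*/2)\, Q_2,$$
and since $G_{\gamma, der}$ is almost simple, this single integer (evaluated on a short coroot) determines the extension up to isomorphism of BD-type covers.

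The key step is to invoke the standard Brylinski--Deligne splitting criterion on the simply-connected cover: for a split simply-connected almost simple group $\mbf{H}$, an $n^*$-fold BD cover with invariant $Q$ splits if and only if $n^*_\alpha = 1$ for a short coroot, equivalently $n^* \mid Q$, and when it splits the splitting is unique since $\mbf{H}(F)$ is perfect. Pulling the push-out back along $f: G_{\gamma, sc} \onto G_{\gamma, der}$ preserves the BD invariant $Q$, so when $n^* \mid Q$ there is a unique splitting on $G_{\gamma, sc}$. Its descent to $G_{\gamma, der}$ is obstructed by a homomorphism $\Ker(f) \to \mu_{n^*}$, where $\Ker(f)$ is a subgroup of the finite abelian group $\Ker(\mbf{f})(F)$. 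The hypothesis $\gcd(n, |\Ker(\mbf{f})|) = 1$, together with the observation that the push-out $\mfr{m}: \mu_n \times \mu_2 \to \mu_{n^*}$ has kernel $\Delta(\mu_2)$ absorbing the potential $2$-torsion contribution from the metaplectic factor $\wt{G}_{\gamma}^{(2),\phi}$, forces this obstruction to vanish. Conversely, any splitting of $p_{\gamma, der}^{\mfr{m}}$ pulls back to one on $G_{\gamma, sc}$, which by the Matsumoto/BD criterion forces $n^* \mid Q$.

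The final step is the routine parity check translating $n^* \mid (n^*/n)\, Q_1 + (n^*/2)\, Q_2$ into the stated conditions. When $2 \mid Q_2$ both summands are divisible by $n^*$ exactly when $n \mid Q_1$, yielding (i). When $Q_2$ is odd, parity forces $n$ to be even (so $n^*=n$), and reduction modulo $n$ gives $Q_1 + n/2 \equiv 0 \pmod n$, i.e.\ $\gcd(n, Q_1) = n/2$, which is condition (ii); the case of $n$ odd and $Q_2$ odd produces no solutions, consistent with the implicit requirement in (ii) that $n$ be even. The main obstacle in executing this plan is the descent step from $G_{\gamma, sc}$ to $G_{\gamma, der}$: precisely tracking how the metaplectic $\mu_2$ factor cancels the $2$-torsion in $\Ker(\mbf{f})$ is what justifies stating the coprimality hypothesis in terms of $n$ rather than $n^*$.
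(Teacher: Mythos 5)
Your plan follows exactly the paper's route: rewrite $\mfr{m}_*\bigl(\wt{G}_{\gamma, der}^{(n,2)}\bigr)$ as an $n^*$-fold BD cover with invariant $Q=(n^*/n)Q_1+(n^*/2)Q_2$, pull back along $f: G_{\gamma, sc}\onto G_{\gamma, der}$ to reduce splitting to the simply-connected case (where it is exactly $n^*\mid Q$), and descend using the coprimality hypothesis; your closing parity computation correctly fills in the ``or equivalently'' step that the paper states without proof. The one place where your write-up is actually wrong is the claim that the kernel of $\mfr{m}:\mu_n\times\mu_2\to\mu_{n^*}$ is $\Delta(\mu_2)$ and ``absorbs'' the $2$-torsion of $\Ker(\mbf{f})$: that kernel equals $\Delta(\mu_2)$ only when $2\mid n$, but then $n^*=n$ and $\gcd(n,|\Ker(\mbf{f})|)=1$ is already coprimality with $n^*$, so no absorption is needed. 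When $n$ is odd — the only case where $n^*=2n\ne n$ and the descent is genuinely delicate — $\mfr{m}$ is an isomorphism with trivial kernel, so your mechanism does not apply. The actual subtlety is whether a nontrivial character $\Ker(f)\to\mu_2\subset\mu_{n^*}$ can make $s|_{\Ker(f)}\neq\iota$; the paper's proof handles this by asserting uniqueness of the splitting of $\Ker(f)$ from $\gcd(|\Ker(f)|,n)=1$ alone (quietly passing over the possible $2$-part of $\Ker(f)$), and Remark~\ref{R:run-eg} candidly acknowledges that the equivalence ``splitting over $G_{\gamma,sc}$ $\iff$ splitting over $G_{\gamma,der}$'' is expected but not proved unconditionally. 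So you and the paper share the same argument — and the same soft spot — but your proposed explanation for why the coprimality hypothesis in $n$ suffices is not the right one.
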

\begin{proof}
Recall that the homomorphism $\mbf{f}: \mbf{G}_{\gamma, sc} \onto \mbf{G}_{\gamma, der}$ induces a map 
$$f: G_{\gamma, sc} \onto G_{\gamma, der} \into \mbf{G}_{\gamma, der}(F).$$
For any $k$-fold cover $\wt{G}_{\gamma, der}^{(k)}$ of $G_{\gamma, der}$, by pull-back via $f$ one has a $k$-fold cover $\wt{G}_{\gamma, sc}^{(k)}$, as depicted in the following diagram:
\begin{equation} \label{CD:der-sc}
\begin{tikzcd}
& \mu_k \ar[r, equal] \ar[d, hook] & \mu_k \ar[d, hook] \\
{\rm Ker}(f) \ar[r, hook, "\iota"] \ar[d, equal] & \wt{G}_{\gamma, sc}^{(k)} \ar[r, two heads] \ar[d, two heads] & \wt{G}_{\gamma, der}^{(k)} \ar[d, two heads, "q"] \\
{\rm Ker}(f) \ar[r, hook, "\iota"] & G_{\gamma, sc} \ar[u, bend left= 40, "s"] \ar[r, two heads, "f"] & G_{\gamma, der}.
\end{tikzcd}
\end{equation}
If $\wt{G}_{\gamma, der}^{(k)}$ splits, then $\wt{G}_{\gamma, sc}^{(k)}$ splits by its definition as pull-back.
On the other hand, let 
$$s: G_{\gamma, sc} \into \wt{G}_{\gamma, sc}^{(k)}$$
 be a splitting, which is actually unique since $G_{\gamma, sc}$ is equal to its derived subgroup. The assumption implies that $\gcd(\val{{\rm Ker}(f)}, n)=1$, and thus the splitting of $\Ker(f)$ into $\wt{G}_{\gamma, sc}^{(k)}$  is unique. That is, the left lower square in \eqref{CD:der-sc} involving $s$ commutes. This implies that $\wt{G}_{\gamma, der}^{(k)}$ splits over $G_{\gamma, der}$.

Applying the above to the case $k=n^*$, we see that $p_{\gamma, der}^\mfr{m}$ splits if and only if $\wt{G}_{\gamma, sc}^{(n^*)}$ splits over $G_{\gamma, sc}$. However, since $G_{\gamma, sc}$ is simply-connected, its $n^*$-fold cover splits if and only if $n^*$ divides the Brylinski--Deligne invariant. This completes the proof.
\end{proof}

% \textcolor{red}{STOPPED HERE}

\begin{rmk}  \label{R:run-eg}
In general, consider $\mbf{f}: \mbf{G}_{sc} \onto \mbf{G}$, the simply-connected cover of a semi-simple group $\mbf{G}$, which gives $f: G_{sc} \onto G_{der} \into G$. There exists a non-split extension of $G_{der}$ whose pull-back is split. For example, consider $\mbf{G}=\SO_3$ and thus $\mbf{G}_{sc}=\SL_2$. Then one has $G_{der}=\SL_2(F)/\set{\pm 1}$, where we identify $\set{\pm 1}$ with the center of $\SL_2(F)$. Consider the double cover
$$\wt{G}_{der}^{(2)}:=(\mu_2 \times \SL_2(F))/{\rm Im}(\sigma),$$
where the map $\sigma: \set{\pm 1} \to \mu_2 \times \SL_2(F)$ is given by $\sigma(a)=((\varpi, a)_2, a)$. It is easy to see that the pull-back of $\wt{G}_{der}^{(2)}$ to $G_{sc}=\SL_2$ is a split extension. However, $\wt{G}_{der}^{(2)}$ splits over $G_{der}$ if and only if $(\varpi, -1)_2=1$. In fact, $\wt{G}_{der}^{(2)}$ is a Brylinski--Deligne cover associated with $(D=0, \eta)$, and is the running example for several interesting phenomena discussed in \cite{GG}.

However, in the notation of Proposition \ref{P:exp-spl}, we believe that the splitting of $\wt{G}_{\gamma, sc}^{(k)}$ is always equivalent to the splitting of $\wt{G}_{\gamma, der}^{(k)}$, without the assumption imposed there. That is, the covers $\wt{G}_{\gamma, der}^{(k)}$ that arise are not of the type in the preceding paragraph, essentially due to the fact that we consider in this paper covers associated with $(D, \eta=\mbf{1})$. We are not able to prove this expectation in full generality, though.
\end{rmk}

The importance of the quasi-admissibility is given as follows.

\begin{thm}[{\cite[Theorem 1]{GGS21}}] \label{T:GGS}
Let $\mca{O}$ be an $F$-split orbit such that  $\mca{O} \in \mca{N}_{\Wh}^{\rm max}(\pi)$ for some $\pi \in \Irrg(\wt{G})$. Then $\mca{O}$ is quasi-admissible.
\end{thm}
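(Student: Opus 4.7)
The plan is to adapt the strategy of \cite{GGS21} to the covering setting while carefully tracking the $(n,2)$-genuine structure that arises from combining the genuine action of $\wt{G}^{(n)}$ with the metaplectic action coming from the Heisenberg quotient $U_h/N_{h,u}'$. The entire argument is driven by realizing the twisted Jacquet module $\pi_{N_{h,u},\psi_u}$ as a $\Hom$-space that naturally carries a module structure for $\wt{G}_\gamma^{(n,2)}$.

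First, since $\mca{O}\in\mca{N}_\Wh^{\max}(\pi)$, choose $u\in\mca{O}$ and a neutral element $h$ completing $u$ to an $\mfr{sl}_2$-triple $\gamma=(f,h,u)$, so that $\pi_{N_{h,u},\psi_u}\ne 0$. The subgroup $U_h$ fits into a Heisenberg group structure on $U_h/N_{h,u}'$ with center $N_{h,u}/N_{h,u}'$ and symplectic quotient $\mfr{g}[1]$. By Stone--von Neumann, there is a unique irreducible smooth representation $\omega_{\psi_u}$ of $U_h$ with central character $\psi_u$. The stabilizer $G_\gamma$ preserves $(U_h,N_{h,u}')$ and $\psi_u$, so it acts on $U_h/N_{h,u}'$ through $\phi\colon G_\gamma\to \Sp(\mfr{g}[1])$, and hence acts on $\omega_{\psi_u}$ only after passing to the metaplectic cover $\wt{G}_\gamma^{(2),\phi}$. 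Combining with the genuine action inherited from $\wt{G}^{(n)}$, I would construct an $(n,2)$-genuine smooth module $\tau$ of $\wt{G}_\gamma^{(n,2)}$ together with a $\wt{G}_\gamma^{(n,2)}$-equivariant identification
$$\pi_{N_{h,u},\psi_u}\ \cong\ \Hom_{U_h}\bigl(\pi,\ \omega_{\psi_u}\otimes\tau\bigr).$$
This is the covering-group analogue of the module constructed in \cite[\S 6]{GGS21}, and the canonical splitting of unipotent subgroups into $\wt{G}^{(n)}$ makes this step essentially formal.

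Second, I would use the maximality of $\mca{O}$ to show that $\tau$ is finite-dimensional. The key input is the root-exchange/deformation technique of \cite{GGS17, GGS21}: if $\tau$ were infinite-dimensional, one could deform the Whittaker pair $(h,u)$ through a sequence of non-neutral Whittaker pairs interpolating towards a neutral pair associated to a strictly larger nilpotent orbit $\mca{O}'>\mca{O}$, and produce from an infinite-dimensional subquotient of $\tau$ a nonzero degenerate Whittaker functional for $\pi$ at $\mca{O}'$. This would contradict the assumption $\mca{O}\in\mca{N}_\Wh^{\max}(\pi)$. Hence $\tau$ is finite-dimensional, which is precisely what quasi-admissibility requires. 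By Proposition \ref{P:qadm}, this is equivalent to the splitting of $p_{\gamma,der}^\mfr{m}$, so $\mca{O}$ is $\wt{G}^{(n)}$-quasi-admissible.

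The principal obstacle is the second step: the deformation argument that converts infinite-dimensionality of $\tau$ into a Whittaker model at a larger orbit. In the linear setting of \cite{GGS21} this is executed by iteratively exchanging roots across carefully chosen parabolic subgroups, and the delicate point in the covering setting is to verify that each intermediate unipotent subgroup over which one exchanges carries a canonical splitting into $\wt{G}^{(n)}$ compatible with the metaplectic twist from $\omega_{\psi_u}$. Because all relevant subgroups are unipotent and unipotents split canonically and uniquely into $\wt{G}^{(n)}$, and because the Heisenberg/Weil contribution is tracked functorially through the fiber product $\wt{G}_\gamma^{(n,2)}$, this compatibility holds, and the \cite{GGS21} argument goes through essentially verbatim.
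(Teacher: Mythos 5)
Your overall plan tracks the one the paper takes: both defer the real work to the construction in \cite[\S 5.1]{GGS21} that puts an $(n,2)$-genuine $\wt{G}_\gamma^{(n,2)}$-module structure on the (finite-dimensional) space $\pi_{N_\mca{O},\psi_\mca{O}}$ and then invokes the definition of quasi-admissibility. But two points in your reconstruction are off.

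First, the displayed identification
$\pi_{N_{h,u},\psi_u}\cong\Hom_{U_h}(\pi,\omega_{\psi_u}\otimes\tau)$
is malformed: $\tau$ is by hypothesis trivial on $U_h$, so the right side is $\Hom_{U_h}(\pi,\omega_{\psi_u})\otimes\tau\cong\pi_{N_{h,u},\psi_u}\otimes\tau$, which forces $\dim\tau=1$ and collapses the construction. The intended (and correct) statement is that the multiplicity space itself, i.e.\ the isotypic component $\Hom_{U_h}(\omega_{\psi_u},\pi)$ (equivalently, $\pi_{N_{h,u},\psi_u}$ after the Stone--von Neumann identification), \emph{is} the $(n,2)$-genuine module: the $\mu_n$-action is inherited from $\wt{G}^{(n)}$, and the $\mu_2$-action is the Weil-representation twist coming from $\phi\colon G_\gamma\to\Sp(\mfr{g}[1])$. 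There is no auxiliary $\tau$ to be solved for.

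Second, you propose to prove finite-dimensionality of $\pi_{N_\mca{O},\psi_\mca{O}}$ by a deformation argument raising $\mca{O}$ to a larger orbit. In fact the paper uses this as a black box: finite-dimensionality of the generalized Whittaker space at a maximal orbit is a theorem of M{\oe}glin--Waldspurger, extended to positive residual characteristic by Varma and to covering groups by Patel, all cited in the introduction of the paper (and this is precisely what gives the identity $c_\mca{O}=\dim_\C\pi_{N_\mca{O},\psi_\mca{O}}$). The GGS domination/root-exchange machinery is used in \cite{GGS17,GGS21} to transfer non-vanishing between Whittaker pairs for the \emph{same} nilpotent $u$, not to convert infinite-dimensionality into non-vanishing at a strictly larger orbit; your contradiction argument is therefore not established by the tools you invoke. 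The correct and shorter route is to cite the known finite-dimensionality and devote the argument solely to exhibiting the $(n,2)$-genuine module structure, as the paper does.
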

\begin{proof}
The proof of \cite[Theorem 1.4]{GGS21} in \S 5.1 there actually shows that if $\mca{O} \in \mca{N}_{\Wh}^{\rm max}(\pi)$, then the non-zero finite-dimensional space $\pi_{N_\mca{O}, \psi_\mca{O}}$ affords an $(n, 2)$-genuine representation of  $\wt{G}_{\gamma}^{(n, 2)}$ and thus is quasi-admissible.
\end{proof}

\subsection{Admissibility versus quasi-admissibility}
Again, consider $\mbf{f}: \mbf{G}_{sc} \onto \mbf{G}$ of a semisimple $\mbf{G}$ and the arising $f: G_{sc} \to G$, which is not necessarily surjective. It is possible that a cover over $G$ does not split, but its restriction to $G_{der}$ splits and thus also the pull-back to $G_{sc}$. Note that if $\mbf{G} = \mbf{G}_{\gamma, 0}$ for some $\gamma$, then this gives an example of orbit which is quasi-admissible but not admissible.

As a more concrete example of this, consider $\mbf{G}=\SO_k$. Let $\wt{\SL}_{k}^{(m)}$ be the $m$-fold cover with Brylinski--Deligne invariant $Q(\alpha^\vee)$, where $\alpha^\vee$ is any coroot of $\SL_k$. Consider the cover $\wt{\SO}_k^{(m)}$ obtained from restricting $\wt{\SL}_k$ via the inclusion $\SO_k \into \SL_k$. Thus,
$$\BDI(\wt{\SO}_k^{(m)})=2Q(\alpha^\vee) \text{ for } k \gest 4,  \text{ and } \BDI(\wt{\SO}_3^{(m)})=4Q(\alpha^\vee).$$

\begin{lm} \label{L:SO}
Keep notations as above. The cover $\wt{\SO}_k^{(m)}, k\gest 3$, has a finite-dimensional $\mu_m$-genuine representation if and only if $m|\BDI(\wt{\SO}_k^{(m)})$.
\end{lm}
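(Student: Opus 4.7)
The strategy is to reduce to the simply-connected cover $\Spin_k \to \SO_k$ and invoke the standard fact that for a split simply-connected group, an $m$-fold Brylinski--Deligne cover splits over $F$ if and only if $m$ divides its Brylinski--Deligne invariant. Pull-back along $\Spin_k \to \SO_k$ preserves the Brylinski--Deligne invariant, since it is computed from short coroots and the two groups share the same root system. Thus $\BDI(\wt{\Spin}_k^{(m)}) = \BDI(\wt{\SO}_k^{(m)})$, and the whole lemma will follow once we understand the descent from $\Spin_k(F)$ to $\SO_k(F)_+$, the image of $\Spin_k(F) \to \SO_k(F)$ (equivalently, the subgroup of $\SO_k(F)$ generated by unipotents).

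For the forward direction, any finite-dimensional $\mu_m$-genuine representation of $\wt{\SO}_k^{(m)}$ is trivial on every canonically lifted unipotent subgroup, since unipotent $p$-adic groups admit no non-trivial finite-dimensional representations. Consequently, the restriction of $\wt{\SO}_k^{(m)}$ to $\SO_k(F)_+$ must split (its intersection with $\mu_m$ in the image of any unipotent is forced to be trivial). Pulling this splitting back along $\Spin_k(F) \twoheadrightarrow \SO_k(F)_+$ gives a splitting of $\wt{\Spin}_k^{(m)}$, forcing $m \mid \BDI(\wt{\Spin}_k^{(m)}) = \BDI(\wt{\SO}_k^{(m)})$.

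For the converse, assume $m \mid \BDI(\wt{\SO}_k^{(m)})$. Then $\wt{\Spin}_k^{(m)}$ splits with a section $s$ that is unique because $\Spin_k(F)$ is perfect. The next step is to verify that $s$ restricted to $Z(\Spin_k)(F)$ lands in $\Ker\bigl(\wt{\Spin}_k^{(m)} \to \wt{\SO}_k^{(m)}\bigr)$, so that $s$ descends to a section of $\wt{\SO}_k^{(m)}$ over $\SO_k(F)_+$. Granting this descent, the quotient $\SO_k(F)/\SO_k(F)_+$ is a finite abelian group (a subquotient of $F^\times/(F^\times)^2$ via the spinor norm), so $\wt{\SO}_k^{(m)}/s(\SO_k(F)_+)$ is a central extension of a finite abelian group by $\mu_m$ and admits a $\mu_m$-genuine character. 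Inflation then produces the required finite-dimensional genuine representation of $\wt{\SO}_k^{(m)}$.

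The main obstacle is the descent step on the center of $\Spin_k$. This must be checked through the explicit Brylinski--Deligne data (the bisector $D$ paired with $\eta = \mathbf{1}$), using crucially that $\BDI(\wt{\SO}_k^{(m)})$ is always an even multiple of the $\BDI$ of $\wt{\SL}_k^{(m)}$---namely $2Q(\alpha^\vee)$ for $k \geq 4$ and $4Q(\alpha^\vee)$ for $k = 3$. This built-in even divisibility should force the unique section $s|_{Z(\Spin_k)(F)}$ to coincide with the natural embedding of $Z(\Spin_k)(F)$ coming from pull-back, which is exactly what is needed to descend $s$ to $\SO_k(F)_+$ and thereby close the argument.
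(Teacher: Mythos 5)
Your ``only if'' direction is essentially the paper's: restrict to $\wt{\Spin}_k^{(m)}$, observe that any finite-dimensional genuine representation kills the (canonically split) unipotent subgroups, hence $\wt{\Spin}_k^{(m)}$ splits over the perfect group $\Spin_k(F)$, and conclude $m\mid \BDI$.

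The ``if'' direction, however, has a genuine gap that you flag yourself but do not close. You want to descend the unique section $s\colon \Spin_k(F)\to\wt{\Spin}_k^{(m)}$ to a section of $\wt{\SO}_k^{(m)}$ over $\SO_k(F)_+$, which is equivalent to asking that the cover $\wt{\SO}_k^{(m)}$ split over $\SO_k(F)_+$ (the subgroup generated by unipotents). This is precisely the phenomenon Remark \ref{R:run-eg} is about: there exist covers of $G_{der}$ whose pull-back to $G_{sc}$ splits but which are themselves non-split, and the paper explicitly says that it \emph{cannot prove in general} that covers coming from $(D,\eta=\mbf{1})$ avoid this obstruction. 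Proposition \ref{P:exp-spl} gets around it only under a coprimality hypothesis ($\gcd(n,\lvert\Ker\mbf{f}\rvert)=1$) that fails for even $m$ here, since $\lvert Z(\Spin_k)\rvert\in\{2,4\}$. Your appeal to the even divisibility $\BDI(\wt{\SO}_k^{(m)})=2Q(\alpha^\vee)$ (resp.\ $4Q(\alpha^\vee)$) is the right heuristic --- that is exactly why one expects the descent to hold --- but ``should force'' is not an argument, and you offer no computation of $s|_{Z(\Spin_k)(F)}$.

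The paper avoids this entirely. Rather than descending a section, it constructs a one-dimensional genuine representation directly: it shows that under $m\mid 2Q(e)$ the rank-one cover $\wt{F^\times}$ splits over $F^{\times 2}$, producing an abelian cover $\wt{F^\times/2}$ of the finite group $F^\times/F^{\times 2}$, and then invokes the explicit isomorphism $\wt{\SO}_k^{(m)}\simeq \mca{N}^*\bigl(\wt{F^\times/2}\bigr)$ (with $\mca{N}$ the spinor norm) from \cite[Example~2.10]{GSS1}. That identification makes the splitting over $\Ker(\mca{N})=\SO_k(F)_+$ manifest and directly produces the genuine character by pull-back, handling the $k=3$, $m=4Q(\alpha^\vee)$ case separately. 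In short: your strategy would work if you could prove the descent of $s$, but that is an open step in the generality you invoke; the paper instead proves the concrete isomorphism via the spinor norm, which is the key input your proposal is missing.
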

\begin{proof}
We first show the ``only if" part. If $\wt{\SO}_k^{(m)}$ has a finite dimensional $\mu_m$-genuine representation, then it gives rise to one for $\wt{G}_{sc}=\wt{\Spin}_k$. It then follows that $\wt{G}_{sc}$ splits over $G_{sc}$, which implies that $m$ divides $\BDI(\wt{G}_{sc})=\BDI(\wt{\SO}_k^{(m)})$.

Now for the ``if" part, we first assume $m|2Q(\alpha^\vee)$.
Then the desired result essentially follows from the discussion in \cite[\S 2.4]{GSS1}. Let $\Z e$ be the cocharacter lattice of $\mbf{G}_m$ and let $Q: \Z e \to \Z$ be a quadratic form. It gives the cover 
\begin{equation} \label{F:covF}
 \mu_m \into \wt{F^\times} \onto F^\times
 \end{equation}
associated with $Q$. Now assume $m|2Q(e)$, then the cover $\wt{F^\times}$ splits over $F^{\times 2}$ and thus we obtain a cover
$$\mu_m \into \wt{F^\times/2} \onto F^\times/2,$$
where $F^\times/2:=F^\times/F^{\times 2}$. This extension is abelian, though non-split. Consider the spinor norm
$$\mca{N}: \SO_k \to F^\times/2$$
and the pull-back cover 
$$\mu_m \into \mca{N}^*(\wt{F^\times/2}) \onto \SO_k.$$
If we consider the $\wt{\SL}_k^{(m)}$ with Brylinski--Deligne invariant $Q(\alpha^\vee):=Q(e)$, then by restriction it gives rise to $\wt{\SO}_k^{(m)}$ and one has
$$ \wt{\SO}_k^{(m)} \simeq \mca{N}^*(\wt{F^\times/2}),$$
see \cite[Example 2.10]{GSS1}. Thus, there is a one-dimensional $\mu_m$-genuine representation of $\wt{\SO}_k$.

For the ``if" part, the only remaining case is when $k=3$ and $m=4Q(\alpha^\vee)$. In this case, the extension \eqref{F:covF} associated with $Q(e):=Q(\alpha^\vee)$ also splits over $F^{\times 2}$. The pull-back cover 
$$\mu_m \into \mca{N}^*(\wt{F^\times/2}) \onto \SO_3$$
is equal to the cover $\wt{\SO}_3^{(m)}$ of $\SO_3$ restricted from $\wt{\SL}_3^{(m)}$ with $\BDI(\wt{\SL}_3^{(m)})=Q(\alpha^\vee)$ via $\SO_3 \into \SL_3$. Since in this case $\wt{F^\times/2}$ clearly has a finite-dimensional $\mu_m$-genuine representation, so does the cover $\wt{\SO}_3^{(m)}$.

Combining all the above, the proof is completed.
\end{proof}

Thus, suppose $\mca{O}_u$ and $\gamma$ are such that
\begin{enumerate}
\item[--] $G_{\gamma, 0} = \SO_k$ as above,
\item[--] $\BDI(G_{\gamma, 0}^{(n)})=2Q(\alpha^\vee)$ with $n|2Q(\alpha^\vee)$,
\item[--] $4|\BDI(G_{\gamma, 0}^{(2), \phi})$.
\end{enumerate}
Such orbit $\mca{O}_u$ is always quasi-admissible. But it might not be admissible, if $n=2Q(\alpha^\vee)$. A concrete example is the orbit $B_3$ of the exceptional group $F_4$, see \S \ref{SS:F4}.

\subsection{$\wt{G}^{(n)}$-raisable orbits}\label{sec:raisable}
Now we discuss about the notion of $\wt{G}$-raisability  following \cite{JLS}. It pertains to a ``local" version of non-quasi-admissibility relative to a ``good" choice of an $\SL_2 \subset G_\gamma$, if it exists.

Let $\mfr{g}_\gamma \subset \mfr{g}$ be the centralizer of $\gamma$ in $\mfr{g}$. Assume:
\begin{enumerate}
\item[--] (C0) There is a non-trivial  map
$$\tau: \mfr{sl}_2 \to \mfr{g}_\gamma.$$
\end{enumerate}
In this case, we write
$$\mfr{sl}_{2, \tau} = \tau(\mfr{sl}_2).$$
If we set
$$u_\tau:=\tau(e_-(x))$$
for some $0 \ne x \in F$, then 
$$\gamma'=\gamma \oplus \tau: \mfr{sl}_2 \to \mfr{g}$$
is a Jacobson--Morozov map associated to the nilpotent element $u'=u + u_\tau$. Let $\mfr{g}[j, l] \subset \mfr{g}$ be the space of vectors with $\gamma$-weight $j$ and $\tau$-weight $l$.  We assume the following:
\begin{enumerate}
\item[--] (C1) The $\tau$-weights $l$ are bounded by 2.
\item[--] (C2) As $\mfr{sl}_{2, \tau}$-module,
$$\mfr{g}[1] = \mfr{g}[1]^{\mfr{sl}_{2,\tau}} \oplus m V_2.$$
\item[--] (C3) $\dim \mfr{g}[0, 2] = 1 + \dim \mfr{g}[2, 2]$.
\end{enumerate}
By abuse of notation, we still denote by 
$$\tau: \SL_2 \to G_\gamma$$
the map corresponding to $\tau$. Also, there is a natural group homomorphism
$$\phi_m: \SL_2 \to \Sp_{2m}$$
arising from $\phi \circ \tau$ and (C2) above. Recall that 
$$\phi: G_\gamma \longrightarrow \Sp(\mfr{g}[1]),$$
is the natural group homomorphism.
This gives rise to the following diagram
\begin{equation} \label{CD:02}
\begin{tikzcd}
\wt{\SL}_2^{(n), \tau} \times_{\SL_2} \wt{\SL}_2^{(2), \phi_m} \ar[r]  \ar[d] \ar[rd, "p_\tau"] & \wt{\SL}_2^{(2), \phi_m} \ar[r]  \ar[d, two heads] & \Mp_{2m}  \ar[d, two heads] \\
\wt{\SL}_2^{(n), \tau} \ar[r, two heads] \ar[d] & \SL_2 \ar[r, "\phi_m"] \ar[d, "\tau"] & \Sp_{2m} \\
\wt{G}^{(n)} \ar[r, two heads] & G.
\end{tikzcd}
\end{equation}
From now, we write
$$\wt{\SL}_{2,\tau}^{(n, 2)}:=\wt{\SL}_2^{(n), \tau} \times_{\SL_2} \wt{\SL}_2^{(2), \phi_m}.$$
The two covering groups $\wt{\SL}_2^{(n), \tau}$ and $\wt{\SL}_2^{(2), \phi_m}$ both arise from the Brylinski--Deligne framework. Consider $n^*={\rm lcm}(n, 2)$ and the push-out of $\wt{\SL}_{2,\tau}^{(n, 2)}$ via $\mfr{m}$ as in \eqref{F:mfr-m}. This gives
\begin{equation} \label{CD:p-tau}
\begin{tikzcd}
\mu_n\times \mu_2 \ar[r, hook] \ar[d, "\mfr{m}"] & \wt{\SL}_{2,\tau}^{(n, 2)} \ar[r, two heads] \ar[d] & \SL_2 \ar[d, equal] \\
\mu_{n^*} \ar[r, hook] & \mfr{m}_*(\wt{\SL}_{2,\tau}^{(n, 2)}) \ar[r, two heads, "{p_\tau^\mfr{m}}"] & \SL_2.
\end{tikzcd}
\end{equation}

\begin{dfn} \label{D:rais}
An $F$-split nilpotent orbit $\mca{O} \subset \mfr{g}$ is called $\wt{G}^{(n)}$-raisable if there exists $\tau: \mfr{sl}_2 \to \mfr{g}_\gamma$ satisfying (C0)--(C3) such that the projection $p_\tau^\mfr{m}$ in \eqref{CD:p-tau} does not split.
\end{dfn}

Again, we denote by 
\begin{equation}\label{Q1Q2tau}
    (Q_1^\tau, Q_2^\tau):=\left(\BDI(\wt{\SL}_2^{(n), \tau}), \BDI(\wt{\SL}_2^{(2), \phi_m})\right)
\end{equation}
the pair of Brylinski--Deligne invariants associated with $\wt{\SL}_2^{(n), \tau}$ and $\wt{\SL}_2^{(2), \phi_m}$ respectively. We have an analogue of Proposition \ref{P:exp-spl}, noting that $Q_2^\tau =m$ in this case.

\begin{prop} \label{P:rais}
Keep notations as above. Then $p_\tau^\mfr{m}$ splits if and only if 
$$n^*| \left( Q_1^\tau n^*/n + Q_2^\tau n^*/2 \right),$$
or equivalently,
\begin{enumerate}
\item[(i)]  $n|Q_1^\tau$ when $m\in \N_{\gest 0}$ is even, and
\item[(ii)] $n/\gcd(n, Q_1^\tau)=2$ when $m\in \N_{\gest 0}$ is odd.
\end{enumerate}
\end{prop}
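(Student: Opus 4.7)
The plan is to follow the template of Proposition \ref{P:exp-spl}, with the substantial simplification that $\SL_2$ is already simply-connected, so the subtleties of Remark \ref{R:run-eg} and the hypothesis on $|\Ker(\mbf{f})|$ disappear entirely. First I would reinterpret both $\wt{\SL}_2^{(n), \tau}$ and $\wt{\SL}_2^{(2), \phi_m}$ as $n^*$-fold central extensions of $\SL_2$, obtained by pushing out the central $\mu_n$ (resp.\ $\mu_2$) along the canonical embeddings $\mu_n \hookrightarrow \mu_{n^*}$ and $\mu_2 \hookrightarrow \mu_{n^*}$. Under this reinterpretation, and using the functoriality of the Brylinski--Deligne invariant under rescaling of the quadratic form, these $n^*$-fold covers have invariants $(n^*/n) Q_1^\tau$ and $(n^*/2) Q_2^\tau$ respectively.

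Next I would identify the push-out $\mfr{m}_*(\wt{\SL}_{2,\tau}^{(n, 2)})$ as the Baer sum of these two $n^*$-fold covers, so that its Brylinski--Deligne invariant is
\[
 (n^*/n) Q_1^\tau + (n^*/2) Q_2^\tau.
\]
Because $\SL_2$ is simply-connected, a BD cover of degree $n^*$ over $\SL_2$ splits if and only if $n^*$ divides its BD invariant; this gives the first equivalence stated in the proposition.

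Finally I would translate the congruence $n^* \mid (n^*/n) Q_1^\tau + (n^*/2) Q_2^\tau$ into the case-by-case form, using $Q_2^\tau = m$. When $m$ is even, the term $(n^*/2) m$ is already divisible by $n^*$, so the condition collapses to $n \mid Q_1^\tau$, giving (i). When $m$ is odd, one has to split on the parity of $n$: for $n$ odd, $n^* = 2n$ and $(n^*/n) Q_1^\tau + (n^*/2) m = 2 Q_1^\tau + nm$ is odd, hence never divisible by $2n$, consistent with the fact that $n/\gcd(n, Q_1^\tau)=2$ forces $n$ even; for $n=2n'$ even, $n^* = n$ and the condition becomes $2n' \mid Q_1^\tau + n' m$, i.e.\ $Q_1^\tau \equiv n' \pmod{2n'}$, which is equivalent to $\gcd(n, Q_1^\tau) = n' = n/2$, i.e.\ $n/\gcd(n, Q_1^\tau)=2$, giving (ii).

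There is no genuine obstacle in this proof; the only slightly delicate point is the elementary but careful number-theoretic bookkeeping in the $m$ odd case to verify the equivalence with the clean statement $n/\gcd(n,Q_1^\tau)=2$. Everything else is a direct specialization of the proof of Proposition \ref{P:exp-spl} to $G_{\gamma,\der} = \SL_2$, where no simply-connected cover intervenes.
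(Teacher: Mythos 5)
Your argument is correct and is exactly the argument the paper intends: the paper states Proposition~\ref{P:rais} without a separate proof, indicating only that it is the ``analogue of Proposition~\ref{P:exp-spl} with $Q_2^\tau = m$,'' and your proof is precisely that specialization, noting that $\SL_2$ is already simply-connected so the pullback-to-$G_{\gamma,sc}$ step and the hypothesis on $|\Ker(\mbf{f})|$ disappear. The identification of $\mfr{m}_*(\wt{\SL}_{2,\tau}^{(n,2)})$ as an $n^*$-fold cover with Brylinski--Deligne invariant $(n^*/n)Q_1^\tau + (n^*/2)Q_2^\tau$ matches the discussion preceding Proposition~\ref{P:exp-spl}, and your case-by-case reduction to (i) and (ii), including the check that $\gcd(n,Q_1^\tau)=n/2$ is equivalent to $Q_1^\tau \equiv n/2 \pmod{n}$ when $m$ is odd and $n$ even, is correct.
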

%\begin{proof}
%\end{proof}

As a first example, we have
\begin{cor} \label{C:rais-0}
Let $\wt{G}^{(n)}$ be the $n$-fold cover of an almost simple simply-connected $G$ with $\BDI(\wt{G}^{(n)})=1$. Then the zero orbit is always raisable if $n\gest 2$.  
\end{cor}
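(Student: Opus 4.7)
The plan is to exhibit a specific $\tau$ and then invoke Proposition \ref{P:rais}. I would take $\tau:\mfr{sl}_2 \to \mfr{g}_\gamma = \mfr{g}$ to be the embedding attached to a long root $\alpha$ of $\mbf{G}$ (any root if $G$ is simply-laced), so that the induced map $\tau^\natural$ on cocharacter lattices sends the standard coroot of $\SL_2$ to the short coroot $\alpha^\vee$ of $G$. By the functoriality of the Brylinski--Deligne invariant recorded right after Definition \ref{D:BDI}, this gives
$$Q_1^\tau = \BDI(\wt{\SL}_2^{(n), \tau}) = Q(\alpha^\vee) = \BDI(\wt{G}^{(n)}) = 1.$$

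Next, I would verify conditions (C0)--(C3) for the zero orbit. Because $u = 0$, the Jacobson--Morozov map $\gamma$ is trivial, every element of $\mfr{g}$ has $\gamma$-weight $0$, and $\mfr{g}[j,l] = 0$ whenever $j \neq 0$. In particular $\mfr{g}[1] = 0$, so (C2) holds vacuously with $m = 0$, and (C0) is immediate from $\alpha \neq 0$. For (C1), on the root space $\mfr{g}_\beta$ the $\tau$-weight equals $\langle \beta, \alpha^\vee\rangle = 2(\beta,\alpha)/(\alpha,\alpha)$; since $\alpha$ has maximal length, Cauchy--Schwarz yields $|\langle \beta, \alpha^\vee\rangle| \leq 2|\beta|/|\alpha| \leq 2$. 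For (C3), the equality case of Cauchy--Schwarz shows that $(\beta, \alpha) = (\alpha, \alpha)$ together with $|\beta| \leq |\alpha|$ forces $\beta = \alpha$, so $\mfr{g}_\alpha$ is the only root space of $\tau$-weight $2$ and $\dim \mfr{g}[0,2] = 1 = 1 + \dim \mfr{g}[2,2]$.

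With $m = 0$ even, Proposition \ref{P:rais}(i) says $p_\tau^\mfr{m}$ splits if and only if $n$ divides $Q_1^\tau = 1$, which fails for every $n \geq 2$. Hence $p_\tau^\mfr{m}$ is non-split and, by Definition \ref{D:rais}, the zero orbit is $\wt{G}^{(n)}$-raisable. The most delicate point is the choice of $\tau$: a short-root $\mfr{sl}_2$ would also give $Q_1^\tau = 1$, but it can violate (C1) in the non-simply-laced types (for example in $G_2$ the short-root $\mfr{sl}_2$ puts $\tau$-weight $3$ on the highest root space), so taking $\alpha$ to be a long root is essential.
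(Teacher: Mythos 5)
Your proposal is correct and matches the paper's proof: both pick $\tau$ corresponding to a long root $\alpha$ (so $\alpha^\vee$ is a short coroot and $Q_1^\tau = \BDI(\wt{G}^{(n)}) = 1$), observe $\mfr{g}[1] = 0$ so $m = 0$, and apply Proposition \ref{P:rais}. The only difference is that you spell out the verification of (C1)--(C3), which the paper asserts without detail — your Cauchy--Schwarz argument for (C1)/(C3) and the warning that short-root $\mfr{sl}_2$'s break (C1) in non-simply-laced types are both correct.
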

\begin{proof}
Pick any long root $\alpha$ and let $\tau: \mfr{sl}_2 \to \mfr{sl}_{2,\alpha}$ be the identity. In this case (C1)--(C3) are all satisfied. We have $\mfr{g}[1]=0$. This shows that
$$(Q_1^\tau, Q_2^\tau) = (1, 0).$$
It then follows from Proposition \ref{P:rais} that the orbit $\set{0}$ is raisable if $n\gest 2$.
\end{proof}

The importance of the raisability is seen from the following result given by Jiang--Liu--Savin.

\begin{thm}[{\cite{JLS}}] \label{T:JLS}
Let $\mca{O}$ be a $\wt{G}$-raisable orbit. Then it does not lie in the wavefront set $\mca{N}_{\rm Wh}^{\rm max}(\pi)$ of any $\pi \in \Irrg(\wt{G})$.
\end{thm}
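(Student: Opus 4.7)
The plan is to argue by contradiction, converting the non-splitting of $p_\tau^{\mfr{m}}$ into a rigidity obstruction against a finite-dimensional genuine module over a non-split cover of $p$-adic $\SL_2$. Suppose towards a contradiction that $\mca{O} = \mca{O}_u \in \mca{N}_{\rm Wh}^{\rm max}(\pi)$ for some $\pi \in \Irrg(\wt{G}^{(n)})$. Theorem \ref{T:GGS} then produces a non-zero finite-dimensional $(n,2)$-genuine representation
$$V := \pi_{N_{h,u}, \psi_u}$$
of $\wt{G}_{\gamma}^{(n,2)}$. The goal is to show that this $V$ cannot exist under the raisability hypothesis.

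Using the raisability data $\tau : \mfr{sl}_2 \to \mfr{g}_\gamma$ satisfying (C0)--(C3), I would identify $\wt{\SL}_{2,\tau}^{(n,2)}$ with the pull-back $\tau^*\bigl(\wt{G}_\gamma^{(n,2)}\bigr)$: condition (C2) makes $\phi\circ\tau$ factor as $\SL_2 \xrightarrow{\phi_m}\Sp_{2m} \hookrightarrow \Sp(\mfr{g}[1])$ with trivial action on $\mfr{g}[1]^{\mfr{sl}_{2,\tau}}$, so the metaplectic factor pulls back exactly to $\wt{\SL}_2^{(2),\phi_m}$. This yields a natural homomorphism $\wt{\SL}_{2,\tau}^{(n,2)} \to \wt{G}_\gamma^{(n,2)}$ covering $\tau$; restricting $V$ along it and pushing out via $\mfr{m} : \mu_n \times \mu_2 \to \mu_{n^*}$ produces a non-zero finite-dimensional $\mu_{n^*}$-genuine continuous representation $\overline{V}$ of $\mfr{m}_*\bigl(\wt{\SL}_{2,\tau}^{(n,2)}\bigr)$. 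The descent through $\mfr{m}$ is automatic because the standard $(n,2)$-genuine character $(\zeta,\xi)\mapsto \epsilon(\zeta)\xi$ is trivial on $\ker(\mfr{m})$: vacuously when $n$ is odd, and when $n$ is even the faithfulness of $\epsilon$ forces $\epsilon(-1) = -1$, so $\epsilon(-1)\cdot(-1) = 1$.

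The main obstacle, and the core of the argument, is the final step. By raisability, $p_\tau^\mfr{m}$ does not split, so $\mfr{m}_*\bigl(\wt{\SL}_{2,\tau}^{(n,2)}\bigr)$ is a topologically non-trivial central $\mu_{n^*}$-extension of $\SL_2(F)$. I would then invoke the $\SL_2$-rigidity principle that over a $p$-adic $F$ such a non-split cover admits no non-zero finite-dimensional continuous genuine representation --- this contradicts the existence of $\overline V$. The rigidity can be established via the canonical splittings $s^\pm : U^\pm \hookrightarrow \mfr{m}_*(\wt{\SL}_{2,\tau}^{(n,2)})$ over the opposite unipotent radicals: smoothness of $\overline V$ forces $\overline V\circ s^\pm$ to have open kernel on $U^\pm\cong F$, and since $U^+$ and $U^-$ topologically generate the perfect group $\SL_2(F)$, a continuous splitting over $\SL_2(F)$ would have to emerge, contradicting the hypothesis of non-splitting. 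Thus $\overline V=0$, hence $V=0$, contradicting $\mca{O}\in \mca{N}_{\rm Wh}^{\rm max}(\pi)$. This is precisely the rigidity phenomenon that the definition of raisability is tailored to exploit, following \cite{JLS, GGS21}.
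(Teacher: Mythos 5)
Your proposal is correct, but it takes a genuinely different route from the cited reference \cite{JLS}. What you actually establish is that raisability forces non-quasi-admissibility, and then Theorem \ref{T:GGS} delivers the conclusion. The key identification $\wt{\SL}_{2,\tau}^{(n,2)} \cong \tau^*\bigl(\wt{G}_\gamma^{(n,2)}\bigr)$ is correct: condition (C2) makes $\phi\circ\tau$ factor through $\Sp_{2m}$, and $\Mp(\mfr{g}[1])$ restricted to $\Sp_{2m}$ (acting trivially on the non-degenerate orthogonal complement $\mfr{g}[1]^{\mfr{sl}_{2,\tau}}$) is precisely $\Mp_{2m}$; the descent through $\mfr{m}$ is also correct for the reason you give. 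The final $\SL_2(F)$-rigidity is the right principle but reads a bit loosely as written; the clean version is that $\ker\overline{V}$ is open and normal in $\mfr{m}_*\bigl(\wt{\SL}_{2,\tau}^{(n,2)}\bigr)$, so by Tits simplicity its image in $\SL_2(F)$ is all of $\SL_2(F)$, while genuineness gives $\ker\overline{V}\cap\mu_{n^*}=\{1\}$, hence $\ker\overline{V}$ is itself a section of $p_\tau^\mfr{m}$. By contrast, the argument in \cite{JLS} is a direct raising argument: from a non-zero generalized Whittaker model along $\mca{O}$ one produces a non-zero one along the strictly larger orbit $\mca{O}'$ of $u'=u+u_\tau$, using the non-splitting of $p_\tau^\mfr{m}$ to annihilate the constant term of a Fourier expansion; condition (C3) is what ensures $\dim\mca{O}'>\dim\mca{O}$, contradicting the maximality of $\mca{O}$. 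Your route is shorter given the \cite{GGS21} machinery and, notably, makes no use of (C3), so it actually yields a slightly stronger statement; but it depends logically on Theorem \ref{T:GGS}, whereas the argument in \cite{JLS} predates \cite{GGS21} and is self-contained.
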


\subsection{A comparison} 
Consider the three properties of $\mca{O} \subset \mfr{g}$:
\begin{enumerate}
\item[(P1)] $\mca{O}$ is $\wt{G}$-raisable;
\item[(P2)] $\mca{O}$ is not $\wt{G}$-quasi-admissible;
\item[(P3)] $\mca{O}$ does not lie in $\mca{N}_{\rm Wh}^{\rm max}(\pi)$ for any $\pi \in \Irrg(\wt{G})$.
\end{enumerate}
In view of Theorems \ref{T:GGS} and \ref{T:JLS}, one has the implications
$$\text{(P1)} \Longrightarrow \text{(P3)} \Longleftarrow \text{(P2)}.$$
In general, raisable and non-quasi-admissible orbits are not identical. Even in the case $G_{\gamma, 0} = \SL_2$, these two properties are not necessarily equivalent, though can be shown to be so in many cases.

%%%
\section{Explicit analysis for each Cartan type} \label{S:ex-ana}
For each covering group $\wt{G}^{(n)}$ and each $F$-split nilpotent orbit $\mca{O} \subset \mfr{g}$, our goal is to determine the $n$ such that $\mca{O}$ is quasi-admissible and raisable. 
%There are orbits whose raisability are not accessible by the method in \cite{JLS}. 

The computation in this section is case by case. For classical groups of type $A$ to $D$, we work out the details. For exceptional groups, we illustrate the method by giving full details for $G_2$ and $F_4$. For $E_6, E_7$ and $E_8$, we only consider the raisability and quasi-admissibility of those orbits which are conjecturally equal to the wavefront sets of theta representations. Whenever our method applies, we show that these conjectural wavefront orbits of theta representations are not raisable and are also quasi-admissible.

Let 
$$\mfr{p}=(p_1^{d_1} \cdots p_i^{d_i} \cdots p_k^{d_k})$$
 denote a partition of $r$, where $p_i$'s are distinct with $d_i \in \N_{\gest 1}$. For any $p \in \set{p_i}_i$ appearing in $\mfr{p}$, we define two functions
\begin{equation} \label{D:A-B}
\mfr{A}(\mfr{p}, p) = \sum_{\substack{i:\ p_i> p, \\ p_i - p +1 \in 2\Z}} d_i, \quad \mfr{B}(\mfr{p}, p) = \sum_{\substack{i:\ p_i < p, \\ p_i - p +1 \in 2\Z}} d_i. 
\end{equation}

% \textcolor{red}{(Remark: I guess raisability and quasi-admissibility are not invariant under isogeny? A non-split orbit in $\SL_2$ might split in $\GL_2$. But what if we start with split orbit in $G_{der}$?...)}

\subsection{Type A} 
For type $A$ groups, we analyze $\GL_r$ instead of $\SL_r$ for convenience, since the parabolic subgroups of the former allow for a simpler description.
Consider the Dynkin diagram of simple roots for $\GL_r$:

$$\qquad 
\begin{picture}(4.7,0.2)(0,0)
\put(1,0){\circle{0.08}}
\put(1.5,0){\circle{0.08}}
\put(2,0){\circle{0.08}}
\put(2.5,0){\circle{0.08}}
\put(3,0){\circle{0.08}}
\put(1.04,0){\line(1,0){0.42}}
\multiput(1.55,0)(0.05,0){9}{\circle{0.02}}
\put(2.04,0){\line(1,0){0.42}}
\put(2.54,0){\line(1,0){0.42}}
\put(1,0.1){\footnotesize $\alpha_{1}$}
\put(1.5,0.1){\footnotesize $\alpha_{2}$}
\put(2,0.1){\footnotesize $\alpha_{r-3}$}
\put(2.5,0.1){\footnotesize $\alpha_{r-2}$}
\put(3,0.1){\footnotesize $\alpha_{r-1}$}
\end{picture}
$$
\vskip 10pt

For the orbit $\mca{O} \subset \GL_r$ associated with the partition 
$$\mfr{p}_\mca{O}=(p_1^{d_1} \cdots p_k^{d_k} q_1 \cdots q_m),$$
where $d_i \gest 2$ and $p_i, q_j$ are distinct, one has
$$G_\gamma = \prod_{i=1}^k \GL_{d_i}^{\Delta, p_i} \simeq \prod_{i=1}^k \GL_{d_i}.$$
Here $\GL_{d}^{\Delta, p}$ means the image of the diagonal embedding of $\GL_d$ into $\prod_{i=1}^p \GL_d$. For each pair $(p_i, p_j)$ with $p_i$ even and $p_j$ odd, we have a map
$$\phi_{p_i, p_j}: \GL_{d_i} \times \GL_{d_j} \to \Sp(\mfr{g}_{p_i, p_j}[1])$$
arising from the natural action of $\GL_{d_i} \times \GL_{d_j}$, where
$$\mfr{g}_{p_i, p_j}[1] \simeq 2 \min \set{p_i, q_j} \cdot (\C^{d_i} \otimes \C^{d_j}),$$
with $\C^{d_i}$ affording the standard representation of $\GL_{d_i}$.
We get
$$\mfr{g}[1] = \bigoplus_{(p_i, p_j)} \mfr{g}_{p_i, p_j}[1]$$
ranging over pairs $(p_i, p_j)$ as above. Then the representation
$$\phi: G_\gamma \to \Sp(\mfr{g}[1])$$
arises from gluing all the $\phi_{p_i, p_j}$ together. For each $d_i$, denote by 
$$(Q_{1, d_i}, Q_{2, d_i})$$
the two Brylinski--Deligne invariants attached to $\GL_{d_i} \subset G_\gamma$ as defined in \eqref{Q1Q2}. We have 
$$Q_{1, d_i} = p_i \cdot Q(\alpha^\vee)$$
 for any root $\alpha$ of $\GL_r$, and $Q_{2, d_i} \in 2\Z$ for every $1\lest i\lest k$. By Propositions \ref{P:qadm} and \ref{P:exp-spl}, we see that $\mca{O}$ is quasi-admissible if and only if 
$$n_\alpha|p_i$$
for every $i$. Recall that $n_{\alpha}$ is defined in \eqref{n_alpha}.

%Since $p_i$'s are all even, we see that
%$$\wt{G}_\gamma^{(2), \phi} \simeq \mu_2 \times G_\gamma.$$
%In particular, the orbit $\mca{O}$ is $G$-admissible.
%It then follows from Lemma XX that every type $A$ orbit is also $\wt{G}^{(n)}$-admissible.

Now we turn to raisability. Here we choose $1\lest i \lest k$ and take 
$$\tau_i: \SL_2 \to \GL_{d_i}^{\Delta, p_i} \into G_\gamma$$
to be the embedding corresponding to any simple root of $\GL_{d_i}$. Then this $\tau_i$ satisfies (C0)--(C3) as in \S \ref{sec:raisable}, and regardless of $m$ one has
$$\wt{\SL}_2^{(2), \phi_m} \simeq \mu_2 \times \SL_2,$$
i.e., $Q_{2, d_i}^{\tau_i} \in 2\Z$. On the other hand, for any Brylinski--Deligne cover of $\GL_r$, the pull-back covering
$$\mu_n \into \wt{\SL}_2^{(n), \tau} \onto \SL_2$$
has Brylinski--Deligne invariant $Q_{1, d_i}^{\tau_i} = p_i \cdot Q(\alpha^\vee)$. Hence, by Proposition \ref{P:rais}, the orbit  $\mca{O}$ is raisable if $n\nmid p_i \cdot Q(\alpha^\vee)$, i.e., $n_\alpha \nmid p_i$. For the definitions of $Q_{1, d_i}^{\tau_i}$ and $Q_{2, d_i}^{\tau_i}$, see \eqref{Q1Q2tau}.

\begin{thm} \label{T:typeA}
Keep notations as above and assume $Q(\alpha^\vee) \ne 0$. Then the orbit $\mca{O}=(p_1^{d_1} \cdots p_k^{d_k} q_1 \cdots q_m)$ is 
\begin{enumerate}
\item[(i)] quasi-admissible if and only if $n_\alpha | p_i$ for every i;
\item[(ii)] raisable if $n_\alpha \nmid p_i$ for some $i$.
\end{enumerate}
In particular, the orbit $\mca{O}$ is always $\wt{\GL}_r^{(n)}$-raisable for any $n > \val{Q(\alpha^\vee)} \cdot \min\set{p_i}$.
\end{thm}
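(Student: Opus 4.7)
The plan is to reduce both parts to direct applications of the splitting criteria established in Propositions \ref{P:qadm}, \ref{P:exp-spl}, and \ref{P:rais}, using the explicit decomposition $G_\gamma = \prod_{i=1}^{k} \GL_{d_i}^{\Delta, p_i}$ and the shape of the symplectic representation $\mfr{g}[1]$ already set up in the discussion preceding the theorem. Since each almost simple factor is $\SL_{d_i}$, I will be able to check the splitting conditions one factor at a time.

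For part (i), the key computation is to pin down the two Brylinski--Deligne invariants $(Q_{1, d_i}, Q_{2, d_i})$ for each factor $\GL_{d_i}^{\Delta, p_i}$. Since the diagonal-of-multiplicity-$p_i$ embedding scales the pulled-back quadratic form by $p_i$, I get $Q_{1, d_i} = p_i \cdot Q(\alpha^\vee)$. For $Q_{2, d_i}$, the symplectic representation decomposes as $\mfr{g}[1] = \bigoplus \mfr{g}_{p_i, p_j}[1]$, and each summand $\mfr{g}_{p_i, p_j}[1] \simeq 2\min\set{p_i, p_j} \cdot (\C^{d_i} \otimes \C^{d_j})$ carries an even multiplicity, which forces $Q_{2, d_i} \in 2\Z$. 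With $Q_{2, d_i}$ even, Proposition \ref{P:exp-spl}(i) gives that the factor $p_{\gamma, j, der}^\mfr{m}$ splits if and only if $n \mid p_i \cdot Q(\alpha^\vee)$, i.e., $n_\alpha \mid p_i$. Combining across all $i$ via Proposition \ref{P:qadm}, quasi-admissibility of $\mca{O}$ is equivalent to $n_\alpha \mid p_i$ for every $i$.

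For part (ii), for each index $i$ I will take $\tau_i: \mfr{sl}_2 \to \mfr{g}_\gamma$ to be the embedding corresponding to any simple root of the factor $\GL_{d_i}^{\Delta, p_i}$, and verify conditions (C0)--(C3) from \S \ref{sec:raisable}. Conditions (C0) and (C1) are immediate from the adjoint weight structure; (C2) and (C3) reduce to tensor-product bookkeeping, since $\mfr{sl}_{2, \tau_i}$ acts on each tensor summand $\C^{d_i} \otimes \C^{d_j}$ only through the first factor, producing summands that are either trivial or copies of $V_2$, with matching dimensional counts. The pair of invariants computes to $Q_{1, d_i}^{\tau_i} = p_i \cdot Q(\alpha^\vee)$ with $Q_{2, d_i}^{\tau_i} \in 2\Z$ by the same parity argument as in (i), so Proposition \ref{P:rais}(i) yields that $\mca{O}$ is raisable whenever $n_\alpha \nmid p_i$ for some $i$.

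The final ``in particular'' assertion follows from (ii) by choosing the index $i_0$ with $p_{i_0} = \min\set{p_i}$; the hypothesis $n > \val{Q(\alpha^\vee)} \cdot \min\set{p_i}$ together with $Q(\alpha^\vee) \ne 0$ forces $0 < p_{i_0} \cdot \val{Q(\alpha^\vee)} < n$, so $n \nmid p_{i_0} \cdot Q(\alpha^\vee)$ and hence $n_\alpha \nmid p_{i_0}$. The subtlest technical point to execute carefully will be the verification of (C2) and (C3) in part (ii), namely tracking the $(\gamma, \tau_i)$ bi-grading through each tensor summand to confirm $\dim \mfr{g}[0, 2] = 1 + \dim \mfr{g}[2, 2]$; everything else is a routine application of the machinery developed in \S \ref{S:q-r}.
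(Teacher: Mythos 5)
Your proposal follows the same route as the paper's own argument: compute $Q_{1,d_i}=p_i\cdot Q(\alpha^\vee)$ from the diagonal-of-multiplicity-$p_i$ embedding, observe that the multiplicities $2\min\{p_i,p_j\}$ in $\mfr{g}_{p_i,p_j}[1]$ force $Q_{2,d_i}\in 2\Z$, apply Propositions \ref{P:qadm} and \ref{P:exp-spl} for part (i), and take $\tau_i$ corresponding to a simple root of $\GL_{d_i}$ with the same $(Q_1^{\tau_i},Q_2^{\tau_i})$ parity bookkeeping and Proposition \ref{P:rais} for part (ii). The derivation of the ``in particular'' clause is also exactly the intended one, so no gap or genuinely distinct method here.
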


Consider the orbit
$$\mca{O}^{r, n}:=(n_\alpha^ab)$$
of $\GL_r$, where $r=a \cdot n_\alpha+b, 0\lest b < n_{\alpha}$. We believe the following holds:

\begin{conj} \label{C:low-bd}
Let $\pi \in \Irrg(\wt{\GL}_r^{(n)})$. Then one has $\mca{O}^{r, n} \lest \mca{O}$ for every $\mca{O} \in \mca{N}_{\rm Wh}^{\rm max}(\pi)$.
% or these two orbits are not compatible. \textcolor{red}{Perhaps just the first part?}
\end{conj}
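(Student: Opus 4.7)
The plan is to prove Conjecture~\ref{C:low-bd} by combining the raisability technique of this paper with a parabolic-descent argument on $\GL_r$, while isolating a remaining case as the main obstacle.

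First, I would apply Theorem~\ref{T:JLS} together with Theorem~\ref{T:typeA}(ii) as a sieve: for any $\pi \in \Irrg(\wt{\GL}_r^{(n)})$ and any $\mca{O}=(p_1^{d_1}\cdots p_k^{d_k} q_1\cdots q_m) \in \mca{N}_\Wh^{\rm max}(\pi)$, every repeated part $p_i$ (those with $d_i\geq 2$) must be divisible by $n_\alpha$; otherwise $\mca{O}$ is $\wt{\GL}_r^{(n)}$-raisable and excluded from every wavefront set. This already rules out most partitions $\mca{O}$ failing the dominance inequality $\mca{O}\geq\mca{O}^{r,n}$.

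Second, I would exploit the PL-type property of $\GL_r$: every orbit $\mca{O}=(r_1,\ldots,r_k)$ is the regular orbit of its canonical Levi $M_\mca{O}=\prod_i \GL_{r_i}$. Assuming $\mca{O}\in \mca{N}_\Wh^{\rm max}(\pi)$, a suitable extension to covers of the M\oe{}glin--Waldspurger/Varma argument---using the Whittaker pairs framework of \cite{GGS21}---shows that some irreducible subquotient $\sigma=\otimes_i \sigma_i$ of the Jacquet module of $\pi$ along the standard parabolic with Levi $\wt{M}_\mca{O}^{(n)}$ is genuine-generic on each factor $\wt{\GL}_{r_i}^{(n)}$. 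Applying the conjecture inductively on $r_i<r$ yields $(r_i)\geq \mca{O}^{r_i,n}$ for each $i$, producing partition-level constraints on $(r_1,\ldots,r_k)$. Combined with the divisibility constraint from Step~1 on the multiplicities, this is intended to force $\mca{O}\geq \mca{O}^{r,n}$ in dominance order.

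The main obstacle is the case where $\mca{O}$ has all parts distinct---for instance $\mca{O}=(3,2,1)$ in $\wt{\GL}_6^{(3)}$, which satisfies $\mca{O}\not\geq\mca{O}^{6,3}=(3,3)$---so that $G_\gamma$ is a torus, Definition~\ref{D:rais} provides no non-trivial $\tau:\mfr{sl}_2\to \mfr{g}_\gamma$, and the inductive step above only yields consistent but inconclusive constraints. Ruling out such $\mca{O}$ will require a new ingredient, such as a refinement of raisability using unipotent (rather than $\mfr{sl}_2$) subgroups of $G_\gamma$, or a direct dimensional analysis of generalized Whittaker models via Gurevich's refined relationship in \cite{Gur23} between the coefficients $c_\mca{O}$ and degenerate Whittaker functionals. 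A complementary global strategy---realizing $\pi$ as a local component of an automorphic representation and using Arthur-type classification to pin down $\mca{N}_\Wh^{\rm max}$---would also be worth exploring.
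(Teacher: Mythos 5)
Conjecture~\ref{C:low-bd} is stated in the paper as an \emph{open conjecture}; the authors write only ``We believe the following holds'' and give no proof. So there is no paper proof to compare your proposal against, and you should not expect there to be one: what you have written is an approach to an unsolved problem, not a reconstruction of an existing argument.

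That said, a few remarks on the soundness of your sketch. Your Step~1 is correct and is exactly the kind of sieve the paper's machinery supports: Theorem~\ref{T:typeA}(ii) together with Theorem~\ref{T:JLS} excludes from every $\mca{N}_\Wh^{\rm max}(\pi)$ any partition containing a repeated part $p_i$ with $n_\alpha \nmid p_i$. You also correctly isolate the structural reason this cannot finish the job: when all parts of $\mca{O}$ are distinct, $\mbf{G}_\gamma$ is a torus, there is no nontrivial $\tau:\mfr{sl}_2\to\mfr{g}_\gamma$, and Definition~\ref{D:rais} gives nothing. Your example $(3,2,1)$ in $\wt{\GL}_6^{(3)}$ failing dominance against $(3,3)$ is accurate and is precisely the sort of orbit the paper's tools leave untouched.

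Step~2 as written, however, has a logical soft spot. The GGS surjection $\pi_{N_\mca{O},\psi_\mca{O}}\onto\pi_{S,u}$ of \cite[Theorem A]{GGS17} goes from the generalized Whittaker model \emph{onto} the degenerate one, so non-vanishing of $\pi_{N_\mca{O},\psi_\mca{O}}$ does not by itself force non-vanishing of the semi-Whittaker (degenerate) model attached to the PL-datum; you need an independent non-vanishing statement for $\pi_{S,u}$ (for $\mca{O}$ maximal, this is the content of the M\oe glin--Waldspurger/Varma/Patel comparison, which does hold, but it should be invoked rather than derived from GGS). More seriously, after you deduce that the Whittaker model of the Jacquet module $J_U(\pi)$ along $\wt{M}_\mca{O}$ is nonzero, the inductive conclusion ``$(r_i)\gest\mca{O}^{r_i,n}$'' is vacuous: the regular orbit $(r_i)$ of $\GL_{r_i}$ dominates everything, so no constraint on $(r_1,\dots,r_k)$ is produced. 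The constraint you actually want would have to come from the \emph{converse} direction, namely that $J_U(\pi)$ being generic on each $\wt{\GL}_{r_i}^{(n)}$-factor imposes restrictions on which $\pi$ admit such a Jacquet module, and that is not supplied by the conjecture applied inductively. This is consistent with your own assessment that new input is needed; I would add that even in Step~2 the deduction chain needs to be reorganized before the raisability sieve and the parabolic descent can be fitted together.
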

Here the orbit $\mca{O}^{r, n}$ is equal to the leading wavefront set of the theta representation of $\wt{\GL}_r^{(n)}$, see \cites{Sav2, Cai19}. If $\mca{O}^{r, n}$ is the regular orbit, then Conjecture \ref{C:low-bd} implies that every $\pi \in \Irrg(\wt{\GL}_r^{(n)})$ is generic in this case, which is exactly the content of \cite[Conjecture 6.9 (ii)]{Ga6}.

\subsection{Type B and D}
In this subsection, we consider classical groups of orthogonal type.

For $\SO_{2r+1}, r\gest 2$, its Dynkin diagram of simple roots  is given as follows

$$ \qquad 
\begin{picture}(4.7,0.2)(0,0)
\put(1,0){\circle{0.08}}
\put(1.5,0){\circle{0.08}}
\put(2,0){\circle{0.08}}
\put(2.5,0){\circle{0.08}}
\put(3,0){\circle{0.08}}
\put(1.04,0){\line(1,0){0.42}}
\multiput(1.55,0)(0.05,0){9}{\circle*{0.02}}
\put(2.04,0){\line(1,0){0.42}}
\put(2.54,0.015){\line(1,0){0.42}}
\put(2.54,-0.015){\line(1,0){0.42}}
\put(2.74,-0.04){$>$}
\put(1,0.1){\footnotesize $\alpha_1$}
\put(1.5,0.1){\footnotesize $\alpha_2$}
\put(2,0.1){\footnotesize $\alpha_{r-2}$}
\put(2.5,0.1){\footnotesize $\alpha_{r-1}$}
\put(3,0.1){\footnotesize $\alpha_r$}
\end{picture}
$$
\vskip 20pt
\noindent 
We consider the natural covering
$$\wt{\SO}_{2r+1} \into \wt{\SL}_{2r+1}$$
obtained from restriction of the covering $\wt{\SL}_{2r+1}$ with Brylinski--Deligne invariant 
$$\BDI(\wt{\SL}_{2r+1})=1=Q(\alpha^\vee),$$
where $\alpha^\vee$ is any coroot of ${\SL}_{2r+1}$.  
This gives
$$Q(\alpha_r^\vee)=4\cdot Q(\alpha^\vee)=4 \text{ and } Q(\alpha_1^\vee) = 2\cdot Q(\alpha^\vee)=2.$$
That is, 
$$\BDI(\wt{\SO}_{2r+1}) = 2.$$
For $\SO_{2r}, r\gest 3$ we have the Dynkin diagram of simple roots to be

$$
\begin{picture}(4.7,0.4)(0,0)
\put(1,0){\circle{0.08}}
\put(1.5,0){\circle{0.08}}
\put(2,0){\circle{0.08}}
\put(2.5,0){\circle{0.08}}
\put(3,0){\circle{0.08}}
\put(3.5, 0.25){\circle{0.08}}
\put(3.5, -0.25){\circle{0.08}}
\put(1.04,0){\line(1,0){0.42}}
\put(1.54,0){\line(1,0){0.42}}
\multiput(2.05,0)(0.05,0){9}{\circle{0.02}}
\put(2.54,0){\line(1,0){0.42}}
\put(3.00,0){\line(2,1){0.46}}
\put(3.00,0){\line(2,-1){0.46}}
\put(1,0.1){\footnotesize $\alpha_1$}
\put(1.5,0.1){\footnotesize $\alpha_2$}
\put(2,0.1){\footnotesize $\alpha_3$}
\put(2.5,0.1){\footnotesize $\alpha_{r-3}$}
\put(2.9,0.15){\footnotesize $\alpha_{r-2}$}
\put(3.5,0.35){\footnotesize $\alpha_{r-1}$}
\put(3.5,-0.4){\footnotesize $\alpha_r$}
\end{picture}
$$
\vskip 30pt
\noindent We have the cover $\wt{\SO}_{2r}$ obtained from restricting the above $\wt{\SL}_{2r}$ via $\SO_{2r} \into \SL_{2r}$. In this case,
$$\BDI(\wt{\SO}_{2r}) =2.$$

%\textcolor{red}{stopped here}

Consider a nilpotent orbit
$$\mfr{p}_\mca{O}=(p_1^{d_1} \cdots p_i^{d_i}\cdots p_k^{d_k} q_1^{e_1} \cdots q_j^{e_j} \cdots q_m^{e_m}),$$
where $p_i$ are even, $q_j$ are odd, and $d_i\gest 1, e_j\gest 1$. Thus, $d_i$ are necessarily even.
To discuss about the quasi-admissibility and raisability of the orbit $\mfr{p}_\mca{O}$, we first note that
$$G_{\gamma, 0} = \left( \prod_{i=1}^k \Sp_{d_i}^{\Delta, p_i} \right) \times \left( \prod_{j=1}^m \SO_{e_j}^{\Delta, q_j} \right).$$
For each pair $(p_i, q_j)$, we have 
$$\mfr{g}_{p_i, q_j}[1] \simeq \min \set{p_i, q_j} \cdot (\C^{d_i} \otimes \C^{e_j}),$$
where $\C^{d_i}$ affords the natural action of $\Sp_{d_i}$, and $\C^{e_j}$ that of $\SO_{e_j}$; also, 
$$\mfr{g}[1] = \bigoplus_{(p_i, q_j)} \mfr{g}_{p_i, q_j}[1]$$
as $G_\gamma$-representations, see \cite[\S 5]{Nev99}. Since $\BDI(\Mp(\mfr{g}[1]))=1$, we get
\begin{enumerate}
\item[--] the Brylinski--Deligne invariant for $\wt{\Sp}_{d_i}^{(2), \phi}$ is 
$$\sum_j \min \set{p_i, q_j} e_j = \left( \sum_{j:\ p_i > q_j} q_j e_j \right) + p_i \left(\sum_{j:\ p_i < q_j} e_j\right);$$
\item[--] the Brylinski--Deligne invariant for $\wt{\SO}_{e_j}^{(2), \phi}, e_j\gest 4$ is 
$$\sum_i 2 \min \set{p_i, q_j} d_i = 2q_j \left( \sum_{i:\ p_i > q_j} d_i \right) + 2 \left( \sum_{i:\ p_i < q_j} p_i d_i\right).$$
\end{enumerate}
Similarly, we have that
\begin{enumerate}
\item[--] the Brylinski--Deligne invariant for $\wt{\Sp}_{d_i}^{(n)}$ is $p_i$;
\item[--] the Brylinski--Deligne invariant for $\wt{\SO}_{e_j}^{(n)}, e_j\gest 4$ is $2q_j$ (and for $\wt{\SO}_{3}^{(n)}$, it is $4q_j$).
\end{enumerate}

Note that in general
$$\wt{G}_{\gamma, 0}^{(n, 2)} \neq \left( \prod_{i=1}^k \wt{\Sp}_{d_i}^{(n, 2)} \right) \times_{\mu_n} \left( \prod_{j=1}^m \wt{\SO}_{e_j}^{(n, 2)} \right).$$
However, $\wt{G}_{\gamma, 0}^{(n, 2)}$ has $(n, 2)$-genuine representation if and only if each factor has as well, which follows easily from \cite[Proposition 6.4]{GGS21}. Thus, it suffices to consider each factor in $G_{\gamma, 0}$.
The $n^*$-fold cover $\mfr{m}_*(\wt{\Sp}_{d_i}^{(n, 2)})$ has Brylinski--Deligne invariant
$$(n^*p_i/n) + \left( \sum_j \min \set{p_i, q_j} e_j\right) n^*/2$$
and $\mfr{m}_*(\wt{\SO}_{e_j}^{(n, 2)}), e_j\gest 4$ has Brylinski--Deligne invariant
$$(2n^*q_j/n) + n^*\left(\sum_i \min \set{p_i, q_j} d_i\right),$$
and 
$\mfr{m}_*(\wt{\SO}_{3}^{(n, 2)})$ has Brylinski--Deligne invariant
$$(4n^*q_j/n) + 2n^*\left(\sum_i \min \set{p_i, q_j} d_i\right).$$

%We write
%$$f^>(\mfr{p}_\mca{O}, p_i):= \sum_{j:\  p_i > q_j} e_j \text{ and } f^<(\mfr{p}_\mca{O}, q_j):=\sum_{i:\ q_j < p_i} d_i.$$
Hence, $\mfr{m}_*(\wt{\Sp}_{d_i}^{(n, 2)}), d_i \gest 2$ splits over $\Sp_{d_i}$ if and only if the following hold:
\begin{enumerate}
\item[--] $n| p_i$ and $\mfr{B}(\mfr{p}_\mca{O}, p_i) \in 2\Z$, if $n$ is odd;
\item[--] $n| p_i$, if $n$ is even and $\mfr{B}(\mfr{p}_\mca{O}, p_i) \in 2\Z$;
\item[--] $\gcd(n, p_i)=n/2$, if $n$ is even and $\mfr{B}(\mfr{p}_\mca{O}, p_i) \notin 2\Z$.
\end{enumerate}
 On the other hand, it follows from Lemma \ref{L:SO} that $\wt{\SO}_{e_j}^{(n, 2)}, e_j\gest 3$, has a finite dimensional $(n, 2)$-genuine representation in the following cases:
 \begin{enumerate}
 \item[--] $n|q_j$, if $n$ is odd;
 \item[--] $n|(2q_j)$, if $n$ is even and $e_j\gest 4$;
 \item[--] $n|(4q_j)$, if $n$ is even and $e_j=3$.
 \end{enumerate}
% 
%\begin{rmk}
%If $e_j=3$, then we expect that $\wt{\SO}_{e_j}^{(n, 2)}$ has finite-dimensional $(n, 2)$-genuine representation if $n|(4q_j)$. If $n=4q_j$, then the pull-back cover of $\SL_2$ splits, and we expect
%\end{rmk}

Now, regarding raisability, we implement \cite[\S 8--9]{JLS} and consider the two cases:
\begin{enumerate}
\item[$\bullet$] (B/D-Sym) Suppose there exists $d_i \gest 2$. Let $\tau: \SL_2 \into \Sp_{d_i}$ be the embedding corresponding to the long simple root of $\Sp_{d_i}$. Then this $\tau$ satisfies (C1)--(C3) where $\mfr{g}[1]=\mfr{g}[1]^{\mfr{sl}_{2,\tau}} \oplus m V_2$ with
$$m=p_i \cdot \left(\sum_{q_j: \ q_j > p_i} e_j \right) + \sum_{q_j: \ q_j < p_i} q_j \cdot e_j = p_i \cdot \mfr{A}(\mfr{p}_\mca{O}, p_i) + \sum_{q_j: \ q_j < p_i} q_j \cdot e_j.$$
\item[$\bullet$] (B/D-Ort) Suppose there exists $e_j \gest 4$ and thus a Levi subgroup 
$$\GL_2 \times \SO_k \subset \SO_{e_j}.$$
Let $\tau: \SL_2 \into \SO_{e_j}$ be corresponding to the root of $\GL_2$. This $\tau$ satisfies (C1)--(C3) and
$\mfr{g}[1]=\mfr{g}[1]^{\mfr{sl}_{2,\tau}} \oplus m V_2$, where
$$m=q_j \cdot \left( \sum_{p_i:\ p_i > q_j} d_i \right) + \sum_{p_i: \ p_i < q_j} p_i \cdot d_i =q_j \cdot \mfr{A}(\mfr{p}_\mca{O}, q_j) +   \sum_{p_i: \ p_i < q_j} p_i \cdot d_i.$$
\end{enumerate}

To give a sufficient condition for the orbit $\mca{O}$ to be raisable, we proceed with considering the two covering groups
$$\wt{\SL}_2^{(2), \phi_m} \text{ and } \wt{\SL}_2^{(n), \tau}.$$
Again, we have two cases depending on the consideration of symplectic or orthogonal stabilizer.
\begin{enumerate}
\item[$\bullet$] (B/D-Sym) We have $Q_2^\tau=m$ and  
$$Q_1^\tau=p_i$$
for the Brylinski--Deligne invariants.
 It follows from Proposition \ref{P:rais} that the cover
 $$\mfr{m}_*(\wt{\SL}_2^{(n,2)}) \onto \SL_2$$
 splits if and only if $ \sum_{q_j: \ q_j < p_i} e_j$ is even and $n|(p_i \cdot Q(\alpha^\vee))$, or $ \sum_{q_j: \ q_j < p_i} e_j$ is odd and $n/\gcd(n, p_i\cdot Q(\alpha^\vee))=2$.
It follows that the orbit $\mca{O}$ is raisable if one of the following holds:
\begin{enumerate}
\item[--] $n\nmid p_i$ or $\mfr{B}(\mfr{p}, p_i) \notin 2\Z$, if $n$ is odd;
\item[--] $n\nmid p_i$, if $n$ is even and $\mfr{B}(\mfr{p}, p_i) \in 2\Z$;
\item[--] $\gcd(n, p_i)\ne n/2$, if $n$ is even and $\mfr{B}(\mfr{p}, p_i) \notin 2\Z$.
\end{enumerate}
\item[$\bullet$] (B/D-Ort) In this case, since $d_i$ is always even we get $m\in 2\Z$. Here $\SO_{e_i}$ has Brylinski--Deligne invariant  equal to
$2q_j$. Thus, the covering $\wt{\SL}_2^{(n), \tau}$ has Brylinski--Deligne invariant $Q_1^\tau=2q_j$ as well. We see that $\mfr{m}_*(\wt{\SL}_2^{(n,2)})$ splits if and only if $n|2q_j$. That is, the orbit $\mca{O}$ is raisable if $n\nmid (2q_j)$.
\end{enumerate}

\begin{thm} \label{T:tyBD}
Consider the orthogonal orbit $\mca{O}=(p_1^{d_1} \cdots p_i^{d_i}\cdots p_k^{d_k} q_1^{e_1} \cdots q_j^{e_j}\cdots q_m^{e_m})$ for the orthogonal group $G_r=\SO_{2r}$ or $\SO_{2r+1}$ with $p_i$ even and $q_j$ odd.
\begin{enumerate}
\item[(i)] It is $\overline{G}_r^{(n)}$-quasi-admissible if and only if for every $i, j$ with $d_i \gest 2$ and $e_j\gest 3$ the following hold:
\begin{enumerate}
\item[--] $n| p_i$ and $\mfr{B}(\mfr{p}_\mca{O}, p_i) \in 2\Z$, if $n$ is odd;
\item[--] $n| p_i$, if $n$ is even and $\mfr{B}(\mfr{p}_\mca{O}, p_i) \in 2\Z$;
\item[--] $\gcd(n, p_i)=n/2$, if $n$ is even and $\mfr{B}(\mfr{p}_\mca{O}, p_i) \notin 2\Z$;
\item[--] $n|q_j$, if $n$ is odd;
 \item[--] $n|(2q_j)$, if $n$ is even and $e_j\gest 4$;
 \item[--] $n|(4q_j)$, if $n$ is even and $e_j=3$.
\end{enumerate}
\item[(ii)] It is $\overline{G}_r^{(n)}$-raisable if 
for some $i, j$ with $d_i \gest 2$ and $e_j\gest 4$
one of the following holds:
\begin{enumerate}
\item[--] $n\nmid p_i$ or $\mfr{B}(\mfr{p}, p_i) \notin 2\Z$, if $n$ is odd;
\item[--] $n\nmid p_i$, if $n$ is even and $\mfr{B}(\mfr{p}, p_i) \in 2\Z$;
\item[--] $\gcd(n, p_i)\ne n/2$, if $n$ is even and $\mfr{B}(\mfr{p}, p_i) \notin 2\Z$;
\item[--] $n \nmid (2q_j)$.
\end{enumerate}
\end{enumerate}
\end{thm}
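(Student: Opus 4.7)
The plan is to consolidate the explicit Brylinski–Deligne computations carried out in the preceding discussion with the general criteria established in Propositions \ref{P:qadm}, \ref{P:exp-spl}, and \ref{P:rais}, together with Lemma \ref{L:SO}. The key point is that the stabilizer decomposes as
$$G_{\gamma, 0} = \left( \prod_{i=1}^k \Sp_{d_i}^{\Delta, p_i} \right) \times \left( \prod_{j=1}^m \SO_{e_j}^{\Delta, q_j} \right),$$
so both the quasi-admissibility and raisability questions factor across almost-simple components.

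For part (i), by Proposition \ref{P:qadm}, quasi-admissibility of $\mca{O}$ is equivalent to the splitting of $p^{\mfr{m}}_{\gamma, j, der}$ for each almost-simple factor of $G_{\gamma, der}$. I would treat the two types of factors separately. For each symplectic factor $\Sp_{d_i}$ with $d_i \geq 2$, since $\Sp_{d_i}$ is already simply-connected, Proposition \ref{P:exp-spl} applies with no hypothesis on $n$, and one simply plugs in the Brylinski–Deligne invariants $Q_{1,d_i} = p_i$ and $Q_{2,d_i} = \sum_j \min\{p_i, q_j\} e_j$ computed just above the theorem; the parity of $Q_{2,d_i}$ matches $\mfr{B}(\mfr{p}_\mca{O}, p_i) \bmod 2$ after splitting the sum at $p_i$. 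For each orthogonal factor $\SO_{e_j}$ with $e_j \geq 3$, since $\SO_{e_j}$ is not simply-connected I invoke Lemma \ref{L:SO} instead, which translates the existence of a finite-dimensional $\mu_{n^*}$-genuine representation into the divisibility $n^* \mid \BDI\bigl(\mfr{m}_*(\wt{\SO}_{e_j}^{(n,2)})\bigr)$. Dividing into the three cases ($n$ odd, $n$ even with $e_j \geq 4$, $n$ even with $e_j = 3$) and simplifying yields the listed conditions.

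For part (ii), I would produce the required $\tau: \mfr{sl}_2 \to \mfr{g}_\gamma$ in the two cases (B/D-Sym and B/D-Ort) as already sketched: in the symplectic case embed $\tau$ along the long simple root of $\Sp_{d_i}$, and in the orthogonal case embed along the root of the $\GL_2$-factor of a Levi $\GL_2 \times \SO_k \subset \SO_{e_j}$. In both cases one verifies (C0)--(C3) directly from the $G_\gamma$-module structure
$$\mfr{g}[1] = \bigoplus_{(p_i, q_j)} \min\{p_i, q_j\} \cdot \bigl(\C^{d_i} \otimes \C^{e_j}\bigr),$$
obtaining the multiplicity $m$ in $\mfr{g}[1] = \mfr{g}[1]^{\mfr{sl}_{2,\tau}} \oplus m V_2$ whose parity dictates whether $Q_2^\tau$ is even. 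Proposition \ref{P:rais} then converts non-splitting of the push-out cover $\mfr{m}_*(\wt{\SL}_{2,\tau}^{(n,2)})$ into the stated raisability conditions, with the B/D-Sym case giving the three conditions involving $p_i$ and $\mfr{B}(\mfr{p}, p_i)$, and the B/D-Ort case giving the condition $n \nmid 2q_j$.

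The main obstacle is really the bookkeeping around parities and the distinction between odd and even $n$, since the push-out via $\mfr{m}: \mu_n \times \mu_2 \to \mu_{n^*}$ behaves quite differently in the two cases: when $n$ is odd one has $n^* = 2n$ and the two factors are decoupled, while when $n$ is even $n^* = n$ and the parity of the second Brylinski–Deligne invariant can shift the effective divisibility requirement by a factor of two. A secondary subtlety is checking that the hypothesis of Proposition \ref{P:exp-spl} (coprimality of $n$ with $|\Ker(\mbf{f})|$) is either automatic for $\Sp_{d_i}$ or can be bypassed by Lemma \ref{L:SO} for $\SO_{e_j}$; this is why the orthogonal factors must be handled by a separate argument rather than by a uniform application of Proposition \ref{P:exp-spl}.
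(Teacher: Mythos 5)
Your proposal is correct and follows the same route as the paper: the same factorization of $G_{\gamma,0}$ into symplectic and orthogonal blocks, Propositions \ref{P:qadm} and \ref{P:exp-spl} together with Lemma \ref{L:SO} for part (i), and the same B/D-Sym and B/D-Ort embeddings of $\tau$ fed into Proposition \ref{P:rais} for part (ii). Your observation that the orthogonal factors require Lemma \ref{L:SO} because $|\Ker(\mbf{f})|=2$ obstructs a uniform use of Proposition \ref{P:exp-spl} for even $n$ is precisely the implicit rationale in the paper's treatment.
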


%\begin{eg} 
%\textcolor{red}{Again, use WF of theta as example, show they are not raisable.}
%\end{eg}

\subsection{Type C}
Consider the Dynkin diagram for the simple roots of type $C_r$:

$$ \qquad 
\begin{picture}(4.7,0.2)(0,0)
\put(1,0){\circle{0.08}}
\put(1.5,0){\circle{0.08}}
\put(2,0){\circle{0.08}}
\put(2.5,0){\circle{0.08}}
\put(3,0){\circle{0.08}}
\put(1.04,0){\line(1,0){0.42}}
\multiput(1.55,0)(0.05,0){9}{\circle*{0.02}}
\put(2.04,0){\line(1,0){0.42}}
\put(2.54,0.015){\line(1,0){0.42}}
\put(2.54,-0.015){\line(1,0){0.42}}
\put(2.74,-0.04){$<$}
\put(1,0.1){\footnotesize $\alpha_1$}
\put(1.5,0.1){\footnotesize $\alpha_2$}
\put(2,0.1){\footnotesize $\alpha_{r-2}$}
\put(2.5,0.1){\footnotesize $\alpha_{r-1}$}
\put(3,0.1){\footnotesize $\alpha_r$}
\end{picture}
$$
\vskip 10pt

We consider the group $G=\Sp_{2r}$ and a nilpotent orbit
$$\mca{O}=(p_1^{d_1} \cdots p_i^{d_i}\cdots p_k^{d_k} q_1^{e_1} \cdots q_j^{e_j} q_m^{e_m}),$$
where $p_i$ is even and $q_j$ is odd. Here, $e_j$ is necessarily even.
To discuss about the quasi-admissibility, we first note that
$$G_{\gamma, 0} = \left( \prod_{i=1}^k \SO_{d_i}^{\Delta, p_i} \right) \times \left( \prod_{j=1}^m \Sp_{e_j}^{\Delta, q_j} \right).$$
For each pair $(p_i, q_j)$, we have
$$\mfr{g}_{p_i, q_j}[1] \simeq \min \set{p_i, q_j} \cdot ( \C^{d_i} \otimes \C^{e_j} ),$$
where $\C^{d_i}$ affords the natural action of $\SO_{d_i}$, and $\C^{e_j}$ that of $\Sp_{e_j}$. Also, 
$$\mfr{g}[1] = \bigoplus_{(p_i, q_j)} \mfr{g}_{p_i, q_j}[1],$$
where $G_{\gamma, 0}$ acts on each component as given above. Consider the cover $\wt{\Sp}_{2r}^{(n)}$  obtained from the restriction of $\wt{\SL}_{2r}^{(n)}$ via inclusion $\Sp_{2r} \subset \SL_{2r}$, one has $\BDI(\wt{\Sp}_{2r}^{(n)})= Q(\alpha_r^\vee) = Q(\alpha^\vee)=1$, where $\alpha^\vee$ is any coroot of ${\SL}_{2r}$. We have
\begin{enumerate}
\item[--] the Brylinski--Deligne invariant for $\wt{\Sp}_{e_j}^{(2), \phi}$ is 
$$\sum_i \min \set{p_i, q_j} d_i = \left( \sum_{i:\ p_i < q_j} p_i d_i \right) + q_j \left(\sum_{i:\ p_i > q_j} d_i\right);$$
\item[--] the Brylinski--Deligne invariant for $\wt{\SO}_{d_i}^{(2), \phi}, d_i \gest 4$ is 
$$\sum_j 2 \min \set{p_i, q_j} d_j = 2p_i \left( \sum_{j:\ q_j > p_i} e_j \right) + 2 \left( \sum_{j:\ p_i > q_j} q_j e_j\right).$$
\end{enumerate}
Similarly, we have
\begin{enumerate}
\item[--] the Brylinski--Deligne invariant for $\wt{\Sp}_{e_j}^{(n)}$ is $q_j$;
\item[--] the Brylinski--Deligne invariant for $\wt{\SO}_{d_i}^{(n)}, d_i\gest 4$ is $2p_i$, and it is $4p_i$ for $\wt{\SO}_{3}^{(n)}$.
\end{enumerate}
Again, for quasi-admissibility it suffices to consider the splitting of the $n^*$-fold cover of $\mfr{m}_*(\wt{\Sp}_{e_j}^{(n, 2)})$ and $\mfr{m}_*(\wt{\SO}_{d_i}^{(n, 2)})$ over $\Sp_{e_j}$ and $\SO_{d_i}$ respectively. The $n^*$-fold cover $\mfr{m}_*(\wt{\Sp}_{e_j}^{(n, 2)})$ has Brylinski--Deligne invariant
$$(n^*q_j/n) + \left( \sum_i \min \set{p_i, q_j} d_i\right) n^*/2$$
and $\mfr{m}_*(\wt{\SO}_{d_i}^{(n, 2)})$, $d_i \gest 4$ has Brylinski--Deligne invariant
$$(2n^*p_i/n) + n^*\left(\sum_j \min \set{p_i, q_j} e_j\right).$$
Also, $\mfr{m}_*(\wt{\SO}_{3}^{(n, 2)})$ has Brylinski--Deligne invariant
$$(4n^*p_i/n) + 2n^*\left(\sum_j \min \set{p_i, q_j} e_j\right).$$
Here $\mfr{m}_*(\wt{\Sp}_{e_j}^{(n, 2)}), e_j \gest 2$ splits over $\Sp_{e_j}$ if and only if the following hold:
\begin{enumerate}
\item[--] $n| q_j$ and $\mfr{A}(\mfr{p}_\mca{O}, q_j) \in 2\Z$, if $n$ is odd;
\item[--] $n| q_j$, if $n$ is even and $\mfr{A}(\mfr{p}_\mca{O}, q_j) \in 2\Z$;
\item[--] $\gcd(n, q_j)=n/2$, if $n$ is even and $\mfr{A}(\mfr{p}_\mca{O}, q_j) \notin 2\Z$.
\end{enumerate}
 On the other hand, $\wt{\SO}_{d_i}^{(n, 2)}, d_i\gest 3$, has a finite dimensional $(n, 2)$ representation if the following hold:
 \begin{enumerate}
 \item[--] $n|p_i$, if $n$ is odd;
 \item[--] $n|(2p_i)$, if $n$ is even and $d_i\gest 4$;
 \item[--] $n|(4p_i)$, if $n$ is even and $d_i=3$.
 \end{enumerate}

Now, regarding raisability, we again have  the two cases:
\begin{enumerate}
\item[$\bullet$] (C-Sym) Suppose there exists $e_j \gest 2$. Let $\tau: \SL_2 \into \Sp_{e_j}$ be the embedding corresponding to the long simple root of $\Sp_{e_j}$. Then this $\tau$ satisfies (C1)--(C3) where $\mfr{g}[1]=\mfr{g}[1]^{\mfr{sl}_{2,\tau}} \oplus m V_2$ with
$$m=q_j \cdot \left(\sum_{p_i: \ p_i > q_j} d_i \right) + \sum_{p_i: \ p_i < q_j} p_i \cdot d_i = q_j \cdot \mfr{A}(\mfr{p}, q_j) + \sum_{p_i: \ p_i < q_j} p_i \cdot d_i .$$
We have $Q_2^\tau=m$ and  
$$Q_1^\tau=q_j.$$
 It follows from  Proposition \ref{P:rais} that 
 $$\mfr{m}_*(\wt{\SL}_2^{(n,2)}) \onto \SL_2$$
 split if and only if $\mfr{A}(\mfr{p}, q_j)$ is even and $n|q_i$, or $\mfr{A}(\mfr{p}, q_j)$ is odd and $\gcd(n, q_j)=2$.
This shows that the orbit $\mca{O}$ is raisable if one of the following holds:
\begin{enumerate}
\item[--] $n\nmid q_j$ or $\mfr{A}(\mfr{p},q_j) \notin 2\Z$, if $n$ is odd;
\item[--] $n\nmid q_j$, if $n$ is even and $\mfr{A}(\mfr{p}, q_j) \in 2\Z$;
\item[--] $\gcd(n, q_j)\ne n/2$, if $n$ is even and $\mfr{A}(\mfr{p}, q_j) \notin 2\Z$.
\end{enumerate}
\item[$\bullet$] (C-Ort) Suppose there exists $d_i \gest 4$ and thus a Levi subgroup 
$$\GL_2 \times \SO_k \subset \SO_{d_i}.$$
Let $\tau: \SL_2 \into \SO_{d_i}$ be associated with the root of $\GL_2$. It satisfies (C1)--(C3) and
$\mfr{g}[1]=\mfr{g}[1]^{\mfr{sl}_{2,\tau}} \oplus m V_2$ where
$$m=p_i \cdot \left( \sum_{q_j:\ q_j > p_i} e_j \right) + \sum_{q_j: \ q_j < p_i} q_j \cdot e_j.$$
In this case, since $e_j$ is always even we get $m\in 2\Z$. Here $\SO_{d_i}$ has Brylinski--Deligne invariant 
$2p_i$. Thus, the covering $\wt{\SL}_2^{(n), \tau}$ has Brylinski--Deligne invariant $Q_1^\tau=2p_i$ as well. We see that $\mfr{m}_*(\wt{\SL}_2^{(n,2)})$ splits if and only if $n|(2p_i)$. That is, the orbit $\mca{O}$ is raisable if $n\nmid (2p_i)$.
\end{enumerate}

\begin{thm} \label{T:tyC}
Let $\mca{O}=(p_1^{d_1} \cdots p_i^{d_i}\cdots p_k^{d_k} q_1^{e_1} \cdots q_j^{e_j}\cdots q_m^{e_m})$ be an $F$-split symplectic orbit for $\Sp_{2r}$ with $p_i$ even and $q_j$ odd. Consider the $n$-fold cover  $\wt{\Sp}_{2r}^{(n)}$ of $\Sp_{2r}$ with Brylinski--Deligne invariant equal to $Q(\alpha^\vee_r)=1$.
\begin{enumerate}
\item[(i)] The orbit $\mca{O}$ is $\wt{\Sp}_{2r}^{(n)}$-quasi-admissible if and only if for every $i, j$ with $d_i \gest 3$ and $e_j\gest 2$ the following hold:
\begin{enumerate}
\item[--] $n| q_j$ and $\mfr{A}(\mfr{p}_\mca{O}, q_j) \in 2\Z$, if $n$ is odd;
\item[--] $n| q_j$, if $n$ is even and $\mfr{A}(\mfr{p}_\mca{O}, q_j) \in 2\Z$;
\item[--] $\gcd(n, q_j)=n/2$, if $n$ is even and $\mfr{A}(\mfr{p}_\mca{O}, q_j) \notin 2\Z$;
\item[--] $n|p_i$, if $n$ is odd;
 \item[--] $n|(2p_i)$, if $n$ is even and $d_i\gest 4$;
 \item[--] $n|(4p_i)$, if $n$ is even and $d_i=3$.
\end{enumerate}
\item[(ii)] The orbit $\mca{O}$ is $\wt{\Sp}_{2r}^{(n)}$-raisable if 
for some $i, j$ with $e_j \gest 2$ and $d_i\gest 4$
one of the following holds:
\begin{enumerate}
\item[--] $n\nmid q_j$ or $\mfr{A}(\mfr{p}, q_j) \notin 2\Z$, if $n$ is odd;
\item[--] $n\nmid q_j$, if $n$ is even and $\mfr{A}(\mfr{p}, q_j) \in 2\Z$;
\item[--] $\gcd(n, q_j)\ne n/2$, if $n$ is even and $\mfr{A}(\mfr{p}, q_j) \notin 2\Z$;
\item[--] $n \nmid (2p_i)$.
\end{enumerate}
\end{enumerate}
\end{thm}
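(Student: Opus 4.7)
The plan is to follow the template established for Theorem \ref{T:tyBD}, making the appropriate role reversals between symplectic and orthogonal factors that are dictated by passing from $\SO_k$ to $\Sp_{2r}$. Specifically, the stabilizer decomposes as
$$G_{\gamma, 0} = \left( \prod_{i=1}^k \SO_{d_i}^{\Delta, p_i} \right) \times \left( \prod_{j=1}^m \Sp_{e_j}^{\Delta, q_j} \right),$$
and by Proposition \ref{P:qadm} quasi-admissibility amounts to the splitting of the $n^*$-fold cover $\mfr{m}_*(\wt{G}_{\gamma, j, der}^{(n, 2)})$ for each almost simple factor indexed by $j$. Raisability, meanwhile, is established by producing an $\mfr{sl}_2$-embedding $\tau$ satisfying (C0)--(C3) such that the cover $p_\tau^\mfr{m}$ in \eqref{CD:p-tau} fails to split, and then invoking Proposition \ref{P:rais}.

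For part (i), I would invoke the Brylinski--Deligne invariants computed in the paragraphs preceding the theorem. For each simply-connected factor $\Sp_{e_j}$, one combines $Q_1 = q_j$ and $Q_2 = \sum_i \min\{p_i, q_j\} d_i$ and checks the criterion of Proposition \ref{P:exp-spl}. The key parity observation is that, since $p_i$ is even and $q_j$ is odd,
$$\sum_i \min\{p_i, q_j\} d_i \equiv \sum_{i:\, p_i > q_j} d_i \equiv \mfr{A}(\mfr{p}_\mca{O}, q_j) \pmod 2,$$
which produces the stated dichotomy between the $\mfr{A}(\mfr{p}_\mca{O}, q_j) \in 2\Z$ and $\mfr{A}(\mfr{p}_\mca{O}, q_j) \notin 2\Z$ cases. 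For each orthogonal factor $\SO_{d_i}$, I would instead apply Lemma \ref{L:SO}; here the metaplectic contribution $\sum_j \min\{p_i, q_j\} e_j$ is automatically even (both $p_i$ and $e_j$ are even), so the condition collapses to the stated divisibility of $2p_i$ (or $4p_i$ in the anomalous case $d_i = 3$) by $n$.

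For part (ii), I would treat the two cases (C-Sym) and (C-Ort) separately, exactly mirroring the orthogonal treatment in Theorem \ref{T:tyBD}. In (C-Sym), with some $e_j \gest 2$, I would take $\tau: \SL_2 \hookrightarrow \Sp_{e_j}$ via the long simple root embedding; the functoriality of the Brylinski--Deligne framework gives $Q_1^\tau = q_j$, and a direct dimension count yields $Q_2^\tau = m$ with
$$m \equiv \mfr{A}(\mfr{p}_\mca{O}, q_j) \pmod 2.$$
Proposition \ref{P:rais} then produces the first three bullets. In (C-Ort), with some $d_i \gest 4$, I would take $\tau: \SL_2 \hookrightarrow \SO_{d_i}$ through the natural $\GL_2$-Levi, so that $Q_1^\tau = 2p_i$ and $m$ is automatically even (the sum $p_i \mfr{A}(\mfr{p}_\mca{O}, p_i) + \sum_{q_j < p_i} q_j e_j$ is even because $p_i$ is even and $e_j$ is even whenever $q_j$ is odd). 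Proposition \ref{P:rais} then gives the last bullet.

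The main subtlety, as in the Type $B/D$ case, is the bookkeeping: one must verify (C0)--(C3) for the chosen $\tau$ (which follows from the tables in \cite{Nev99} together with the explicit description of $\mfr{g}[1]$ above) and carefully track how the quadratic form behaves under the pullback along the diagonal embedding $\Sp_{e_j}^{\Delta, q_j}$ and along $\tau$. No new technical ingredient beyond Propositions \ref{P:qadm}, \ref{P:exp-spl}, \ref{P:rais} and Lemma \ref{L:SO} is required; the argument is essentially a symplectic-orthogonal transcription of the proof of Theorem \ref{T:tyBD}.
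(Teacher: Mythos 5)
Your proposal is correct and follows the same route the paper takes: you identify the stabilizer $G_{\gamma,0} = (\prod_i \SO_{d_i}^{\Delta,p_i}) \times (\prod_j \Sp_{e_j}^{\Delta,q_j})$, compute the Brylinski--Deligne pairs $(Q_1,Q_2)$ factor by factor, invoke Proposition \ref{P:exp-spl} for the simply-connected $\Sp_{e_j}$ factors and Lemma \ref{L:SO} for the $\SO_{d_i}$ factors (with the anomalous $d_i=3$ case noted), observe that $\sum_i \min\{p_i,q_j\}d_i \equiv \mfr{A}(\mfr{p}_\mca{O},q_j) \pmod 2$ and that the orthogonal contribution $\sum_j \min\{p_i,q_j\}e_j$ is always even since $e_j$ is, and then for raisability run the (C-Sym) and (C-Ort) embeddings with $Q_1^\tau = q_j$ and $Q_1^\tau = 2p_i$ respectively and apply Proposition \ref{P:rais}. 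This is precisely the paper's argument, a symplectic--orthogonal transcription of Theorem \ref{T:tyBD}.
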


For a partition $\mfr{p}$ and $\sharp \in \set{B, C, D}$, we denote by
$$\mfr{p}_\sharp, \quad \mfr{p}^\sharp$$
the type $\sharp$-collapse and $\sharp$-expansion of $\mfr{p}$ respectively, see \cite[\S 6.3]{CM}.

\begin{eg} \label{E:Sp-orb}
We consider three special families of orbits of $\Sp_{2r}$ and their quasi-admissibility. First, assume $n\in \N_{\gest 1}$ is odd with $2r=a n + b$. Consider the orbit 
$$\mca{O}^{2r, n}_C=(n^a b)_C = 
\begin{cases}
(n^ab) & \text{ if $a$ is even (and $b$ even)}, \\
(n^{a-1}, n-1, b+1) & \text{ if $a$ is odd}.
\end{cases}$$
It is is $\wt{\Sp}_{2r}^{(n)}$-quasi-admissible by Theorem \ref{T:tyC}, and is not raisable. Second, if $n=2k$ with $k$ odd and we write $2r=ka + b$ with $0\lest b < k$, then we have 
$$ (k+1, \mca{O}^{2r-k-1, k})_{C} = 
\begin{cases}
(k+1, k^a, b) & \text{ if $a$ is even}, \\
(k+1, k^{a-1}, k-1, b+1) & \text{ if $a$ is odd}. 
\end{cases}
$$
We see that this orbit is quasi-admissible in this case. If $a$ is even, then the orbit  is not raisable, since $n/\gcd(n, q_j\cdot Q(\alpha^\vee))=2$ in this case.
 As a last example, consider $n \in 4\Z$, then the orbit $\mca{O}^{2r, n/2}_C = \mca{O}^{2r, n/2} = ((n/2)^a b)$,
where $a, b$ are both even. This orbit is quasi-admissible. In this case, $p_i=n/2$ and clearly $n|(2p_i\cdot Q(\alpha^\vee))$. Thus, the orbit is not raisable as well.

These orbits are the speculated wavefront sets of the theta representation of $\wt{\Sp}_{2r}^{(n)}$, see \S \ref{SS:WFcl} for more details.
\end{eg}

In the remaining of this section, we consider exceptional groups and discuss in more details the case of $G_2$ and $F_4$. The computation of the types $E_i, 6\lest i \lest 8$ follows from the same techniques, the details of which however are more involved. Thus, for $E_i$ we will only discuss about the quasi-admissibility and raisability of certain nilpotent orbits, which are the speculated stable wavefront set of the theta representations.

For each orbit written in the Bala--Carter notations, we give information on whether it is special or even, and the structure of $\mbf{G}_{\gamma, der}$, which can be obtained from \cites{CM, Car, JaNo05}. By computing the Brylinski--Deligne invariants $(Q_1, Q_2)$ for each of the simple subgroup of $\mbf{G}_{\gamma, der}$, we obtain a criterion of quasi-admissibility on $n$. Similarly, by computing $(Q_1^\tau, Q_2^\tau)$ for properly chosen $\tau$, we can give condition on $n$ such that the orbit is $\wt{G}^{(n)}$-raisable. Our computations rely on the extensive results in \cite{JaNo05}, some of which were already used in \cite{JLS}.

For simplicity, in the remaining of this section we consider almost simple simply-connected exceptional group $G$ and its cover $\wt{G}^{(n)}$ with
$$\BDI(\wt{G}^{(n)})=1.$$
We use $V^k$ to denote certain irreducible $k$-dimensional representation of the underlying group, which is clear from the context.

\subsection{Type $G_2$} 
Consider the Dynkin diagram of simple roots of $G_2$:
$$
\begin{picture}(5.2,0.2)(0,0)
\put(2.5,0){\circle{0.08}}
\put(3,0){\circle{0.08}}
\put(2.53,0.018){\line(1,0){0.44}}
\put(2.54,0){\line(1,0){0.42}}
\put(2.53,-0.018){\line(1,0){0.44}}
\put(2.7,-0.04){$<$}
\put(2.5,0.1){\footnotesize $\alpha_1$}
\put(3,0.1){\footnotesize $\alpha_2$}
\end{picture}
$$
\vskip 10pt

The results are summarized in Table \ref{table 1}.
\begin{table}[H] 
\caption{Nilpotent orbits for $G_2$}
\label{table 1}
\vskip 5pt
\renewcommand{\arraystretch}{1.3}
\begin{tabular}{|c|c|c|c|c|c|}
\hline
$\mca{O}$ & special/even?  &    $\mbf{G}_{\gamma, der}$  & $(Q_1, Q_2)$ &  quasi-adm., iff  & raisable, if    \\
\hline
\hline
$\set{0}$   & yes/yes  &  $G_2$ & (1, 0)  &  $n =1 $ & $n\gest 2$      \\
\hline
 $A_1$ & no/no & $\SL_{2,\alpha_1}$ & (3, 4) &  $n = 1, 3$ & n.a.    \\
\hline
  $\tilde{A}_1$  & no/no  & $\SL_{2, \alpha_2}$ & (1, 1) &  $n=  2$ & $n\neq 2$   \\
  \hline
$G_2(a_1)$  & yes/yes  & 1 & n.a. &  all $n$  & n.a.   \\
\hline
$G_2$ & yes/yes & 1  & n.a. &  all $n$ & n.a.    \\
\hline
%$ \val{Y^\exc_n}=1 $ & all $n$  & odd $n$  & all $n$ & all $n$ & all $n$ \\
% & & & & & \\
% \hline
\end{tabular}
\end{table}

The entries for the three orbits $\set{0}, G_2(a_1)$ and $G_2$ are clear. Thus, we give a brief explanation for the two orbits $A_1$ and $\tilde{A}_1$.

The minimal orbit $A_1$ gives $G_{\gamma, der} = \SL_{2}$ associated with $\alpha_1$. One has
$$\mfr{g}[1] \simeq V^4,$$
which then gives $Q_2 = 4$. We also have $Q_1 =3$. Thus, the orbit is $\wt{G}_2^{(n)}$-quasi-admissible for $n=1, 3$. The method of raisability in \cite{JLS} does not apply.

The orbit $\tilde{A}_1$ gives $G_{\gamma, der} = \SL_2$ associated with the long root $\alpha_2$, which immediately gives $Q_1 =1$. On the other hand,
$$\mfr{g}[1] \simeq V^2$$
and thus $Q_2 =1$. This shows that the orbit $\tilde{A}_1$ is quasi-admissible if and only if $n=2$.
The map $\tau={\rm id}$ satisfies (C1)--(C3). If $n\ne 2$, then $\mfr{m}_*(\wt{\SL}_{2, \tau}^{(n,2)}) \onto \SL_2$ does not split and thus $\tilde{A}_1$ is raisable.

\subsection{Type $F_4$}  \label{SS:F4}
Consider the Dynkin diagram of simple roots of $F_4$ as follows

$$
\begin{picture}(4.7,0.2)(0,0)
\put(2,0){\circle{0.08}}
\put(2.5,0){\circle{0.08}}
\put(3,0){\circle{0.08}}
\put(3.5,0){\circle{0.08}}
\put(2.04,0){\line(1,0){0.42}}
\put(2.54,0.015){\line(1,0){0.42}}
\put(2.54,-0.015){\line(1,0){0.42}}
\put(2.72,-0.04){$>$}
\put(3.04,0){\line(1,0){0.42}}
\put(2,0.1){\footnotesize $\alpha_1$}
\put(2.5,0.1){\footnotesize $\alpha_2$}
\put(3,0.1){\footnotesize $\alpha_3$}
\put(3.5,0.1){\footnotesize $\alpha_4$}
\end{picture}
$$
\vskip 10pt

The results are given in Table \ref{table 2}.

\begin{table}[H]  
\caption{Nilpotent orbits for $F_4$}
\label{table 2}
\vskip 5pt
\renewcommand{\arraystretch}{1.3}
\begin{tabular}{|c|c|c|c|c|c|}
\hline
$\mca{O}$ & special/even?  &     $\mbf{G}_{\gamma, der}$  &  $(Q_1, Q_2)$ &   quasi-adm., iff & raisable, if     \\
\hline
\hline
$\set{0}$   & yes/yes & $F_4$   & (1, 0)   & $n=1$ & $n\gest 2$    \\
\hline
$A_1$ & no/no & $\Sp_{6}$  & (1, 5)  & $n= 2$ & $n\ne 2$   \\
\hline
$\tilde{A}_1$ & yes/no & $\SL_{4}$  & (1, 2) & $n =1$ & $n\gest 2$   \\
\hline
$A_1 + \tilde{A}_1$ & yes/no  & $\SL_{2,\alpha_1} \times \SO_3$ & (1, 6), (8, 20) & $n=1$ & $n\gest 2$   \\
\hline
$A_2$ & yes/yes & $\SL_{3}$ & (2, 0) & $n=1, 2$ & $n\gest 3$   \\
\hline
$\tilde{A}_2$ & yes/yes & $\SL_{3}$ & (1, 0) & $n=1$ & $n\gest 2$   \\
\hline
$A_2 + \tilde{A}_1$ & no/no & $\SL_2$ & $(6, 11)$  &  $n=4, 12$  & n.a.  \\
\hline
$B_2$ & no/no  & $\SL_{2, \alpha_2} \times \SL_2$ & $(1, 1), (1, 1)$ &  $n=2$  & $n\ne 2$   \\
\hline
$\tilde{A}_2 + A_1$ & no/no & $\SL_2$  & $(3, 12)$  &  $n=1, 3$ &  n.a.   \\
\hline
$C_3(a_1)$ & no/no & $\SL_{2, \alpha_2}$  & $(1, 3)$ &  $n=2$  & $n\ne 2$  \\
\hline
$F_4(a_3)$ & yes/yes & 1  & n.a.  &  all $n$ & n.a.  \\
\hline
$B_3$ & yes/yes & $\SO_3$  & $(8, 0)$  &  $n= 1, 2, 4, 8$ & $n\ne 1, 2, 4, 8$  \\
\hline
$C_3$ & yes/no & $\SL_{2, \alpha_2}$  & $(1, 2)$  &  $n=1$ & $n\gest 2$  \\
\hline
$F_4(a_2)$ & yes/yes & 1  & n.a.  &  all $n$  & n.a.  \\
\hline
$F_4(a_1)$ & yes/yes & 1  & n.a.  &  all $n$ & n.a.   \\
\hline
$F_4$ & yes/yes & 1  & n.a.  &  all $n$ & n.a.   \\
\hline
\end{tabular}
\end{table}

Again, the method for quasi-admissibility and raisability does not apply to distinguished orbits. Thus, it suffices to consider the following orbits
$$A_1,\ \tilde{A}_1, \ A_1 + \tilde{A}_1,\  A_2, \ \tilde{A}_2, \ A_2 + \tilde{A}_1,\  B_2, \ \tilde{A}_2 +A_1, \ C_3(a_1),\  B_3,\  C_3.$$
We give a case by case discussion.

The orbit $A_1$ has 
$$G_{\gamma, der} =\Sp_6$$
associated with $\set{\alpha_2, \alpha_3, \alpha_4}$. One has 
$$\mfr{g}[1]=\extp^3 V_{\rm std} / V_{\rm std}$$
as a $G_{\gamma, der}$-module and is of dimension 14, where $V_{\rm std}$ represents the  standard representation of $\Sp_6$. We have $\mfr{g}[1] = 4V^1 \oplus 5\cdot V^2$ as an $\SL_{2, \alpha_2}$-module. This gives that $Q_2=5$. Thus, it follows from Propositions \ref{P:qadm} and \ref{P:exp-spl} that the orbit  $A_1$ is $\wt{F}_4^{(n)}$-quasi-admissible if and only if $n=2$. For raisability, we take 
$$\mfr{sl}_{2, \tau} =  \mfr{sl}_{2, \alpha_2}$$
be associated with $\alpha_2$. Then (C1)--(C3) are satisfied with $m=5$. This shows that if $2\nmid n$,  then $A_1$ is $\wt{F}_4^{(n)}$-raisable by Proposition \ref{P:rais}.

The orbit $\tilde{A}_1$ gives
$$G_{\gamma, der} = \SL_4$$
associated with $\set{\alpha_1, \alpha_2, \alpha_2 + 2\alpha_3}$.  One has
$$\mfr{g}[1] = V_{\rm std} \oplus V_{\rm std}^*,$$
where $V_{\rm std}$ is the standard representation of $\SL_4$. This gives that $Q_2 =2$. Since $Q_1 =1$, we see that 
$\tilde{A}_1$ is $\wt{F}_4^{(n)}$-quasi-admissible if and only if $n=1$. For raisability, consider the $\tau: \SL_2 \into G_{\gamma, der}$ associated to the simple root $\alpha_2$ of $G_{\gamma, der}$, then (C1)--(C3) are satisfied with $Q_1 = 1$ and $m=Q_2^\tau= 2$. In this case, we see that $\tilde{A}_1$ is raisable if $n\gest 2$.

The orbit $A_1 + \tilde{A}_1$ gives
$$\mbf{G}_{\gamma, der} = \SL_2 \times \SO_3,$$
where $\SL_2$ is associated with $\alpha_1$ and $ \SO_3= \PGL_2$ is embedded in $\SL_3$ associated with $\set{\alpha_3, \alpha_4}$. 
As a $G_{\gamma, der}$-module, one has
$$\mfr{g}[1] = (V^2 \boxtimes V^5) \oplus (V^2 \boxtimes V^1).$$
Thus, we have $Q_1 = 1$ for $\SL_2$ and $Q_1(\alpha_3^\vee + \alpha_4^\vee) = 8$ for $\SO_3$. Also, $Q_2 \in 2\Z$ for both $\SL_2$ and $\SO_3$. Thus, $\mfr{m}_*(\wt{\SL}_{2,\tau}^{(n,2)})$ splits over $\SL_2$ if and only if $n=1$; also, $\mfr{m}_*(\wt{\SO}_{3, der}^{(n,2)})$ splits over $\SO_{3, der}$ if and only if $n=1, 2, 4$ or $8$. Thus, the orbit $A_1 + \tilde{A}_1$ is quasi-admissible if and only if $n=1$. For raisability, consider the embedding 
$$\tau = {\rm id} \times 1: \SL_2 \into G_{\gamma, der},$$
which satisfies (C0)--(C3) with $Q_2^\tau = m=6$. Since $Q_1^\tau = 1$. We see that if $n\gest 2$, then $A_1 + \tilde{A}_1$ is raisable.

The orbits $A_2$ and $\tilde{A}_2$ are even, and thus $Q_2=0$ for both of them. For $A_2$ one has $G_{\gamma, der} = \SL_3$ associated with $\alpha_3, \alpha_4$. For $\tilde{A}_2$ one has $G_{\gamma, der} = \SL_3$ associated with $\alpha_1, \alpha_2$. We get $Q_1 =2$ for $A_2$ and $Q_1 = 1$ for $\tilde{A}_2$.
This gives the criterion for quasi-admissibility. For $A_2$, let 
$$\tau: \SL_2 \into G_{\gamma, der}$$
be the embedding associated with $\alpha_3$, which gives $Q_1^\tau=2$, and thus we see that if $n\gest 3$, then the orbit is raisable. Similarly, if $n\gest 2$, then the orbit $\tilde{A}_2$ is raisable.

The orbit $A_2 + \tilde{A}_1$ has $G_{\gamma, der} =\nabla(\SL_2)$, where $\nabla = {\rm Sym}^2 \times {\rm id}: \SL_2 \into \SL_{3, \alpha_1, \alpha_2} \times \SL_{2, \alpha_4}$. In this case,
$$\mfr{g}[1] = V^2 \oplus V^4$$
as a $G_{\gamma, der}$-module. We have
$$(Q_1, Q_2) = (6, 11).$$
Thus, the orbit is quasi-admissible if $n=4$ or $12$. For raisability, the method in \cite{JLS} does not apply. 

The orbit $B_2$ has
$$G_{\gamma, der} \simeq \SL_{2,\alpha_2} \times \SL_{2} \subset \Sp_4,$$
where $\Sp_4$ is associated with $\alpha_2, \alpha_3$. Also 
$$\mfr{g}[1] = (V^2\boxtimes V^1)  \oplus (V^1 \boxtimes V^2).$$
For every copy of $\SL_2$ in $G_{\gamma, 0}$ one has $Q_1 =1$. Also, $Q_2 = 1$. Thus, the orbit $B_2$ is quasi-admissible if and only if $n=2$. On the other hand, if we take $\SL_{2,\tau}$ to be associated with $\alpha_2$, then (C1)--(C3) are satisfied with $m=1$. In this case, the orbit $B_2$ is raisable if $n\ne 2$.

The orbit $\tilde{A}_2 + A_1$ gives 
$$G_{\gamma, der} = \Delta (\SL_2) \subset \SL_{2, \alpha_1} \times \SL_{2, \alpha_3}$$
and
$$\mfr{g}[1] = 2V^2 \oplus V^4.$$
This gives that $(Q_1, Q_2) = (3, 12)$. Thus, the orbit is quasi-admissible if and only if $n=1, 3$. On the other hand, the method of raisability does not apply.

The orbit $C_3(a_1)$ has $G_{\gamma, der} \simeq \SL_{2, \alpha_2}$ and 
$$\mfr{g}[1] = 3V^2.$$
Thus, $(Q_1, Q_2) =(1, 3)$ and the orbit is quasi-admissible if and only if $n=2$. For raisability, taking $\tau = {\rm id}$ shows that  the orbit is raisable if $n \ne 2$.

The orbits $B_3$ gives 
$$\mbf{G}_{\gamma, der} = \SO_3 \subset \SL_3$$ associated with $\alpha_3, \alpha_4$, and $\mfr{g}[1]= 0$. 
Thus, one has 
$$(Q_1, Q_2) = (8, 0).$$
This shows that the orbit is quasi-admissible if and only if $n=1, 2, 4, 8$. Taking $\tau: \SL_2 \to \SO_3 = \PGL_2$ to be the natural map, one sees that the orbit $B_3$ is raisable if $n\ne 1, 2, 4, 8$.

The orbit $C_3$ has $G_{\gamma, der} = \SL_{2, \alpha_2}$ and 
$$\mfr{g}[1] = 2 V^2.$$
This gives that $(Q_1, Q_2) =(1, 2)$. Thus, the orbit is quasi-admissible if and only if $n=1$. Also, taking $\tau = {\rm id}$ shows that it is raisable if $n\gest 2$.

\subsection{Certain orbits for type $E_r, 6\lest r \lest 8$} 
Now we consider several specific non-distinguished orbits for each exceptional group $E_r$ and investigate their $\wt{E}_r^{(n)}$-quasi-admissibility and $\wt{E}_r^{(n)}$-raisability. The consideration of these orbits is motivated from theta representations $\Theta(\wt{E}_r^{(n)})$, since they are expected to be equal to $\mca{N}_\Wh^{\rm max}(\Theta(\wt{E}_r^{(n)})) \otimes F^{\rm al}$ for some $n$.

More precisely, we consider orbits as follows:
\begin{enumerate}
\item[$\bullet$] for $E_6$ the orbits
$$3A_1, 2A_2 + A_1, D_4, A_4 + A_1, D_5;$$
\item[$\bullet$] for $E_7$ the orbits
$$4A_1, 2A_2 + A_1, A_3 + A_2 + A_1, A_4 + A_2, A_6, E_6(a_1);$$
\item[$\bullet$] for $E_8$ the orbits
$$4A_1, 2A_2 + 2A_1, 2A_3, A_4 + A_3, A_6 + A_1, A_7.$$
\end{enumerate}

\subsubsection{Type $E_6$}
Consider the Dynkin diagram for the simple roots of $G=E_6$, following Bourbaki's labelling \cite{BouL2}.

$$
\begin{picture}(4.7,0.2)(0,0)
\put(1,0){\circle{0.08}}
\put(1.5,0){\circle{0.08}}
\put(2,0){\circle{0.08}}
\put(2.5,0){\circle{0.08}}
\put(3,0){\circle{0.08}}
%\put(3.5,0){\circle{0.08}}
\put(2,-0.5){\circle{0.08}}
\put(1.04,0){\line(1,0){0.42}}
\put(1.54,0){\line(1,0){0.42}}
\put(2.04,0){\line(1,0){0.42}}
\put(2,-0.04){\line(0,-1){0.42}}
\put(2.54,0){\line(1,0){0.42}}
%\put(3.04,0){\line(1,0){0.42}}
%
\put(1,0.1){\footnotesize $\alpha_1$}
\put(1.5,0.1){\footnotesize $\alpha_3$}
\put(2,0.1){\footnotesize $\alpha_4$}
\put(2.5,0.1){\footnotesize $\alpha_5$}
\put(3,0.1){\footnotesize $\alpha_6$}
%\put(3.5,0.1){\footnotesize $\alpha_7^\vee$}
\put(2.08,-0.55){\footnotesize $\alpha_2$}
\end{picture}
$$
\vskip 45pt
The results are given in Table \ref{table 3}.

\begin{table}[H]  
\caption{Some nilpotent orbits for $\wt{E}_6^{(n)}$}
\label{table 3}
\vskip 5pt
\renewcommand{\arraystretch}{1.3}
\begin{tabular}{|c|c|c|c|c|c|c|c|}
\hline
$\mca{O}$ & special/even?  &     $\mbf{G}_{\gamma, der}$  &  $(Q_1, Q_2)$ &  quasi-adm., iff & raisable, if     \\
\hline
\hline
 $3A_1$   & no/no & $\SL_{2, \alpha_2} \times \SL_3$  & $(1, 9), (2, 12)$     &  $n=2$  & $n\ne 2$   \\
\hline
 $2A_2+ A_1$ & no/no & $\SL_2$  & $(3, 14)$  &  $n=1, 3$  & n.a.  \\
\hline
 $D_4(a_1)$ & yes/yes & 1 & n.a. &  all $n$ & n.a.  \\
\hline
 $A_4 + A_1$ & yes/no  & 1 & n.a. &  all $n$ & n.a.   \\
\hline
 $D_5$ & yes/yes & 1 & n.a. &  all $n$ & n.a.   \\
\hline
%& $F_4$ & yes & 1 &  & n.a. & all $n$   \\
%\hline
\end{tabular}
\end{table}

The orbit $3A_1$ has
$$G_{\gamma, der} \simeq \SL_{2, \alpha_2} \times \Delta(\SL_3),$$
where $\Delta: \SL_3 \into \SL_{3, \alpha_1, \alpha_3} \times \SL_{3, \alpha_5, \alpha_6}$ is the diagonal embedding. Also
$$\mfr{g}[1] = V^2 \boxtimes (V^1 \oplus V_{\rm adj}).$$
Thus, for $\SL_{2, \alpha_2}$ we have $(Q_1, Q_2) =(1, 9)$, and for $\Delta(\SL_3)$ one has $(Q_1, Q_2) =(2, 12)$. Hence, $\mfr{m}_*(\wt{\SL}_{2, \alpha_2}^{(n, 2)})$ splits over $\SL_{2, \alpha_2}$ if and only if $n=2$. On the other hand, the $n^*$-fold cover of $\Delta(\SL_3)$ splits if and only if $n=1, 2$. This shows that the orbit is quasi-admissible if and only if $n=2$. For raisability, the $\tau$ such that $\tau(\SL_2) = \SL_{2, \alpha_2}$ satisfies (C1)--(C3) with $Q_2^\tau = m=9$. We see  that the orbit is raisable if $n\ne 2$.

The orbit $2A_2 + A_1$ gives $G_{\gamma, der} = \SL_2$, diagonally embedded into $\SL_{2, \alpha_2} \times \SL_{2, \alpha_3} \times \SL_{2, \alpha_5}$. One has 
$$\mfr{g}[1] = 4V^2 \oplus V^4.$$
This gives $(Q_1, Q_2) = (3, 14)$. Thus, the orbit is quasi-admissible if and only if $n=1, 3$. For raisability the method in \cite{JLS} does not apply.

%The orbit $D_4$ has $G_{\gamma, der} = \nabla \SL_3$, diagonally embedded in $\SL_{3, \alpha_1, \alpha_3} \times \SL_{3, \alpha_5, \alpha_6}$. We have $(Q_1, Q_2) = (2, 0)$, since $\mfr{g}[1]=0$. Thus, the orbit is quasi-admissible if and only if $n=1, 2$. Also, it is raisable if $n \gest 3$. (\textcolor{red}{Is it really $D_4$ we want here? It does not give $n=4$! Check Wan-Yu's table!})

The orbit $D_4(a_1), A_4 + A_1$ and $D_5$ all give that $G_{\gamma, der} = 1$.

\subsubsection{Type $E_7$}
Consider the Dynkin diagram for the simple roots of $E_7$:

$$
\begin{picture}(4.7,0.2)(0,0)
\put(1,0){\circle{0.08}}
\put(1.5,0){\circle{0.08}}
\put(2,0){\circle{0.08}}
\put(2.5,0){\circle{0.08}}
\put(3,0){\circle{0.08}}
\put(3.5,0){\circle{0.08}}
\put(2,-0.5){\circle{0.08}}
\put(1.04,0){\line(1,0){0.42}}
\put(1.54,0){\line(1,0){0.42}}
\put(2.04,0){\line(1,0){0.42}}
\put(2,-0.04){\line(0,-1){0.42}}
\put(2.54,0){\line(1,0){0.42}}
\put(3.04,0){\line(1,0){0.42}}
\put(1,0.1){\footnotesize $\alpha_1$}
\put(1.5,0.1){\footnotesize $\alpha_3$}
\put(2,0.1){\footnotesize $\alpha_4$}
\put(2.5,0.1){\footnotesize $\alpha_5$}
\put(3,0.1){\footnotesize $\alpha_6$}
\put(3.5,0.1){\footnotesize $\alpha_7$}
\put(2.08,-0.55){\footnotesize $\alpha_2$}
\end{picture}
$$
\vskip 45pt

The results are given in Table \ref{table 4}.

\begin{table}[H]  
\caption{Some nilpotent orbits for $\wt{E}_7^{(n)}$}
\label{table 4}
\vskip 5pt
\renewcommand{\arraystretch}{1.3}
\begin{tabular}{|c|c|c|c|c|c|c|c|}
\hline
 $\mca{O}$ & special/even?  &     $\mbf{G}_{\gamma, der}$  &  $(Q_1, Q_2)$ &  quasi-adm., iff & raisable, if     \\
\hline
\hline
 $4A_1$ & no/no & $\Sp_6$ & $(1, 7)$ &   $n=2$ & $n\ne 2$    \\
 \hline
 $2A_2 + A_1$ & no/no & $\SL_2^a \times \SL_2^b$ &  $(3, 8), (3, 16)$ & $n=1, 3$ & n.a.    \\
 \hline
 $A_3 + A_2 + A_1$ & yes/yes  & $\SL_2$ & $(24, 0)$  & $n|24$ & n.a.     \\
 \hline
 $A_4 + A_2$ & yes/yes & $\SL_2$ & $(15, 0)$  & $n|15$ & n.a.  \\
\hline
 $A_6$ & yes/yes  & $\SL_2$ & $(7, 0)$  & $n|7$ & n.a.  \\
\hline
 $E_6(a_1)$ &yes/yes & $1$ & n.a.  & all $n$ & n.a.  \\
\hline
%& $F_4$ & yes & 1 &  & n.a. & all $n$   \\
%\hline
\end{tabular}
\end{table}

The orbit $4A_1$ has  $G_{\gamma, der} \simeq \Sp_6 \subset \SL_6$, where $\SL_6$ is associated with $\Delta - \set{\alpha_2, \alpha_7}$. Also, as $G_{\gamma, der}$-module, one has
$$\mfr{g}[1]=2V^6 \oplus V^{14}.$$
Let $\SL_{2,\tau} \subset G_{\gamma, der}$ be associated with the long root. As $\SL_{2,\tau}$-module, we have
$$\mfr{g}[1] = 7V^2 \oplus 12 V^1$$
with $m=7$. This shows that 
$$(Q_1, Q_2)=(1, 7)$$
and thus the orbit is quasi-admissible if and only if $n=2$. Moreover, considering the above $\SL_{2,\tau}$, we see that it is raisable if $n\ne 2$.

The orbit $2A_2 + A_1$ has $G_{\gamma, der}=\SL_2^a \times \SL_2^b \subset \SL_{2, \alpha_1} \times \SL_4 \times \SL_{2, \alpha_7}$. Here $\SL_2^a \times \SL_2^b \into \SL_{2, \alpha_1} \times \SL_{2, \alpha_7}$ is the identity, and $\SL_2^a \times \SL_2^b \into \SL_4$ is the tensor embedding where $\SL_4$ is associated with $\set{\alpha_2, \alpha_4, \alpha_5}$. One has
$$\mfr{g}[1] = 2(V_a^2 \boxtimes V_b^3) \oplus  (V_a^4 \oplus 2V_a^2) \boxtimes V_b^1.$$
Thus, for $\SL_2^a$ one has $(Q_1, Q_2) = (3, 18)$; for $\SL_2^b$ one has $(Q_1, Q_2)= (3, 16)$. We see that the orbit is quasi-admissible if and only if $n=1, 3$. The method in \cite{JLS} does not work for raisability.

The orbit $A_3 + A_2 + A_1$  gives
$$G_{\gamma, der} \simeq \SL_2 \into \SL_5 \times \SL_3,$$
where   the embedding is given by $4\omega_1 \otimes 2\omega_1$. Here $\omega_1$ is the fundamental weight of $\SL_2$. Also, $\SL_5$ and $\SL_3$ are associated with $\set{\alpha_i: 1\lest i \lest 4}$ and $\set{\alpha_6, \alpha_7}$ respectively. One has
$$(Q_1, Q_2) = (24, 0).$$
Thus, the orbit is quasi-admissible if and only if $n|24$. For $\tau = {\rm id}: \SL_2 \to G_{\gamma, der}$ the condition (C1) is not satisfied, thus the method of \cite{JLS} does not apply.
%$$L = \SL_5 \times \SL_3, \quad \mfr{g}[1]=0, \quad \mfr{g}[2] = \extp^2 \SL_5 \oplus \extp^2 \SL_3.$$ 

The orbit $A_4 + A_2$ gives 
$$G_{\gamma, der} = \SL_2 \into \SL_4 \times \SL_3 \times \SL_{2, \alpha_2}$$
where the embedding is given by $3\omega_1 \otimes 2\omega_1 \otimes \omega_1$. Here $\SL_4, \SL_3$ are associated with $\set{\alpha_5, \alpha_6, \alpha_7}$ and $\set{\alpha_1, \alpha_3}$ respectively. We have
$$(Q_1, Q_2) = (15, 0)$$
and thus the orbit is quasi-admissible if and only if $n|15$. The conditions for raisability are not satisfied.

The orbit $A_6$ gives
$$G_{\gamma, der} = \SL_2 \into \SL_2^a \times  \SL_2^b \times \SL_2^c \times \SL_3,$$
where the embedding is given by identities into $\SL_2^i, i=a, b, c$ and $2\omega_1: \SL_2\into \SL_3$. We have $\mfr{g}[1]=0$.
We get
$$(Q_1, Q_2)=(7, 0)$$
and thus the orbit is quasi-admissible if and only if $7|n$. Since (C1) for the identity map $\SL_2 \to G_{\gamma, der}$ is not satisfied, the method of raisability does not apply.

%The obit $A_4 + A_2$ has Jacobson--Morozov Levi subgroup $A_3 + A_2 + A_1$, with 
%$$\mfr{g}[2] = V^4 \otimes V^3 \otimes V^2.$$
%One has $G_{\gamma, 0} \simeq \SL_2$ with embedding given by $3\omega_1 \otimes 2\omega_1 \otimes \omega_1$ (\textcolor{red}{Check this!}). Thus, the method of \cite{JLS} does not apply. 

%
\subsubsection{Type $E_8$}
Consider the Dynkin diagram for the simple roots of $E_8$:

$$
\begin{picture}(4.7,0.2)(0,0)
\put(1,0){\circle{0.08}}
\put(1.5,0){\circle{0.08}}
\put(2,0){\circle{0.08}}
\put(2.5,0){\circle{0.08}}
\put(3,0){\circle{0.08}}
\put(3.5,0){\circle{0.08}}
\put(4,0){\circle{0.08}}
\put(2,-0.5){\circle{0.08}}
\put(1.04,0){\line(1,0){0.42}}
\put(1.54,0){\line(1,0){0.42}}
\put(2.04,0){\line(1,0){0.42}}
\put(2,-0.04){\line(0,-1){0.42}}
\put(2.54,0){\line(1,0){0.42}}
\put(3.04,0){\line(1,0){0.42}}
\put(3.54,0){\line(1,0){0.42}}
\put(1,0.1){\footnotesize $\alpha_1$}
\put(1.5,0.1){\footnotesize $\alpha_3$}
\put(2,0.1){\footnotesize $\alpha_4$}
\put(2.5,0.1){\footnotesize $\alpha_5$}
\put(3,0.1){\footnotesize $\alpha_6$}
\put(3.5,0.1){\footnotesize $\alpha_7$}
\put(4,0.1){\footnotesize $\alpha_8$}
\put(2.08,-0.55){\footnotesize $\alpha_2$}
\end{picture}
$$
\vskip 45pt

The results are given in Table \ref{table 5}.

\begin{table}[H]  
\caption{Some nilpotent orbits for $\wt{E}_8^{(n)}$}
\label{table 5}
\vskip 5pt
\renewcommand{\arraystretch}{1.3}
\begin{tabular}{|c|c|c|c|c|c|c|c|}
\hline
 $\mca{O}$ & special/even?  &     $\mbf{G}_{\gamma, der}$  &  $(Q_1, Q_2)$ &  quasi-adm., iff & raisable, if     \\
\hline
\hline
 $4A_1$ & no/no & $\Sp_8$ & $(1, 15)$ &  $n=2$ & $n\ne 2$    \\
\hline
 $2A_2 + 2A_1$ & no/no & $\Sp_4$  & $(3, 34)$   & $n=1, 3$   & n.a.    \\
\hline
$2A_3$ & no/no & $\Sp_4$ & $(2, 15)$  &  $n=4$ & n.a.    \\
\hline
 $A_4 + A_3$ & no/no & $\SL_2$ & $(10, 58)$  &  $n|10$ & n.a.    \\
\hline
 $A_6 + A_1$ & yes/no & $\SL_2$ & $(7, 14)$  &  $n=1, 7$ & n.a.     \\
\hline
 $A_7$ & no/no & $\SL_2$ & $(4, 15)$  &   $n=8$ & n.a.    \\
%\hline
\hline
%& $F_4$ & yes & 1 &  & n.a. & all $n$   \\
%\hline
\end{tabular}
\end{table}

The orbit $4A_1$ of $E_8$ gives
$$G_{\gamma, der} = \Sp_8 \into \SL_8,$$
where $\SL_8$ is associated with $\Delta - \set{\alpha_2}$.
Also, $$
\mfr{g}[1]=\extp^3 V^8.$$
Now, if we take $\SL_{2, \tau}$ to be associated with any long root of $G_{\gamma, der}$, then (C1)--(C3) are satisfied with $m=15$. 
This shows that 
$$(Q_1, Q_2) = (1, 15)$$
and thus the orbit is quasi-admissible if and only if $n=2$. It also shows that the orbit is raisable if $n\ne 2$.

%The orbit $D_4$ has $L=E_6$ and $\mfr{g}[2] = V_1 \oplus \omega_1$. We get $G_{\gamma, 0}= F_4$. However, there is no $\SL_{2,\tau} \subset G_{\gamma, 0}$ satisfies the conditions (C1)--(C3), and thus method of \cite{JLS} does not apply. \textcolor{red}{(This case of $D_4$ has not been understood.)}

The orbit $2A_3 + 2A_1$ has 
$$G_{\gamma, der} = \Sp_4 \into \SL_4 \times \SL_5$$
via the diagonal map, where $\Sp_4 \into \SL_4$ is the canonical inclusion and $\SL_4 = \Spin_5 \onto \SO_5 \into \SL_5$. Here $\SL_4$ and $\SL_5$ are associated with $\set{\alpha_6, \alpha_7, \alpha_8}$ and $\set{\alpha_1, \alpha_2, \alpha_3, \alpha_4}$ respectively. From this, we get that
$$Q_1 = 3.$$
Take $\SL_{2, \tau} \subset \Sp_4$ to be associated with the long root of $\Sp_4$, we have that 
$$\mfr{g}[1] = 8V^1 \oplus 8 V^2 \oplus 4V^3 \oplus V^4.$$
Thus, $Q_2 =34$ and this shows that the orbit is quasi-admissible if and only if $n=1, 3$. The method of raisability does not apply.

For the orbit $2A_3$, consider
$$L=\SL_4^a \times \SL_4^b$$
which acts on $\mfr{g}[1]=V^4_b \oplus (V^4_a \boxtimes \extp^2 V^4_b)$. Here $\SL_4^a, \SL_4^b$ are associated with $\set{\alpha_2, \alpha_3, \alpha_4}$ and $\set{\alpha_6, \alpha_7, \alpha_8}$ respectively. We have 
$$G_{\gamma, der} = \Sp_4 \into L$$ 
via the diagonal embedding. 
Let $\SL_{2,\tau} \subset G_{\gamma, der}$ be the one associated with the long root.  We have that as an $\SL_{2,\tau}$-module,
$$\mfr{g}[1] = 8V_1 \oplus 7V_2 \oplus 2V_3.$$
This gives that
$$(Q_1, Q_2) =(2, 15)$$
and thus the orbit is quasi-admissible if and only if $n=4$. For raisability, the method in \cite{JLS} does not work.

The orbit $A_4 + A_3$ has 
$$G_{\gamma, der} = \SL_2 \into \SL_2 \times \SL_2 \times \SL_3 \times \SL_3$$
where the embedding is diagonal via ${\rm id}: \SL_2 \to \SL_2$ and ${\rm Sym}^2: \SL_2 \into \SL_3$. Here the $\SL_2, \SL_3$ are associated to the connected components of $\Delta - \set{\alpha_4, \alpha_7}$ in the Dynkin diagram. One has
$$\mfr{g}[1] = 3V_2 \oplus 3V_4 \oplus V_6$$
as a $G_{\gamma, der}$-module. We get
$$(Q_1, Q_2) =(10, 68)$$
and thus the orbit is quasi-admissible if and only if $n|10$. The method of \cite{JLS} does not apply for raisability.

The orbit $A_6 + A_1$ has 
$$G_{\gamma, der} \simeq \SL_2 \into \SL_2 \times \SL_2 \times \SL_2 \times \SL_3.$$
Here the three $\SL_2$'s and $\SL_3$ are associated to the connected component of $\Delta -\set{\alpha_1, \alpha_4, \alpha_6}$ in the Dynkin diagram. We have
$$\mfr{g}[1] = V_4 \oplus 4V_2$$
and hence
$$(Q_1, Q_2)=(7, 14).$$
This shows that the orbit is quasi-admissible if and only if $n=1, 7$. On the other hand, the method of \cite{JLS} for raisability does not apply.

The orbit $A_7$ gives
$$G_{\gamma, der} \simeq \SL_2 \into  \SL_2 \times \SL_2 \times \SL_2 \times \SL_2,$$
diagonally embedded in the $\SL_2$'s associated with $\alpha_2, \alpha_3, \alpha_5, \alpha_8$. One has 
$$\mfr{g}[1] = 5V_2 \oplus V_4$$
as a $G_{\gamma, der}$-module.  This gives that
$$(Q_1, Q_2) = (4, 15).$$
Hence the orbit is quasi-admissible if and only if $n=8$.  The method of \cite{JLS} does not apply for raisability.

%%%
\section{Wavefront sets of theta representations} \label{S:wf}

In this section, we consider theta representations $\Theta(\nu)$ of $\wt{G}^{(n)}$, where $\nu \in X \otimes \R$ is a certain exceptional vector. We compute explicitly the $F^{\rm al}$-orbit $\mca{O}_{\rm Spr}(j_{W_\tnu}^W \varepsilon_\tnu) \subset \mfr{g}_{F^{\rm al}}$, which  is expected to be the single element in $\mca{N}_{\rm Wh}^{\rm max}(\Theta(\nu))\otimes F^{\rm al}$. We show that a method of determining $\mca{O}_{\rm Spr}(j_{W_\tnu}^W \varepsilon_\tnu) \subset \mfr{g}_{F^{\rm al}}$ is given by Sommers' duality \cite{Som01} between nilpotent orbits, which generalizes the classical Barbasch--Vogan duality. With an explicit computation, we also determine the quasi-admissibility and non-raisability of such orbits.

\subsection{Theta representation}
We introduce theta representations following the notation and exposition in \cite{GaTs}. 

Let $\wt{T} \subset \wt{G}$ be the covering torus of $\wt{G}$. We assume that there exists a certain distinguished (finite-dimensional) genuine representation $\pi^\dag$ of $\wt{T}$ determined by a distinguished genuine central character $\chi^\dag$ of $Z(\wt{T})$, see \cite[\S 6--7]{GG}.
For every $\nu \in X\otimes \R$, there is a map
$$\delta_\nu: T \longrightarrow \C^\times$$
given by 
$$\delta_\nu(y\otimes a) = |a|_F^{\nu(y)}$$
on the generators $y\otimes a\in T$, where $\nu(y)$ is the natural pairing between $Y$ and $X\otimes \R$.
For every $\nu\in X\otimes \R$, denote by
$$I(\pi^\dag, \nu):=\Ind_{\wt{B}}^{\wt{G}} (\pi^\dag \otimes \delta_\nu)$$
the normalized induced principal series representation of $\wt{G}$.

A vector $\nu \in X\otimes \R$ is called an exceptional character if $\nu(\alpha_{Q,n}^\vee) =1$ for every $\alpha\in \Delta$. Here $\alpha_{Q,n}^\vee:=n_{\alpha}\cdot  \alpha^\vee$. 
It follows from the Langlands classification theorem for covers (see \cite{BJ1}) that if $\nu\in X\otimes \R$ is exceptional, then we have
$$I(\pi^\dag, \nu) \onto \Theta(\pi^\dag, \nu),$$
where $\Theta(\pi^\dag, \nu)$ is the unique Langlands quotient of $I(\pi^\dag, \nu)$. We may write $\Theta(\wt{G}, \nu), \Theta(\nu)$ or $\Theta(\wt{G})$ for $\Theta(\pi^\dag, \nu)$, whenever the emphasis is different; but the dependence on $\pi^\dag$ and exceptional $\nu$ are both understood.

A covering group $\wt{G}$ is called saturated (see \cite[Definition 2.1]{Ga6}) if 
$$Y^{sc} \cap Y_{Q,n} = Y_{Q,n}^{sc},$$
where the one-sided inclusion $\supset$ always holds. 
If $G$ is semisimple and simply-connected, then $G$ is saturated if and only if its dual group $\wt{G}^\vee$ is of adjoint type, i.e., $Y_{Q,n} = Y_{Q,n}^{sc}$.
In general, for every $\alpha\in \Phi$ one has
$$\Z[\alpha^\vee] \cap Y_{Q,n} = \Z[i_\alpha \cdot \alpha_{Q,n}^\vee] \text{ with } i_\alpha \in \set{1, 1/2},$$
and $i_\alpha =1/2$ only if $n_\alpha$ is even. Set
$$\tilde{n}_\alpha = i_\alpha \cdot n_\alpha, \ \tilde{\alpha}_{Q,n}^\vee=\tilde{n}_\alpha \cdot \alpha^\vee, \text{ and } \tilde{\alpha}_{Q,n} = \alpha/\tilde{n}_\alpha$$
for every $\alpha \in \Phi$, and
$$\tilde{\Phi}_{Q,n}^\vee:=\set{\tilde{\alpha}_{Q,n}^\vee: \ \alpha\in \Phi}.$$
Let 
$$\tilde{Y}_{Q,n}^{sc} \subset Y_{Q,n}$$
 be the sublattice spanned by $\tilde{\Phi}_{Q,n}^\vee$, and we call it the saturation of $Y_{Q,n}^{sc}$. One has
$$Y_{Q,n}^{sc} \subset \tilde{Y}_{Q,n}^{sc} \subset Y_{Q,n},$$
and if $\wt{G}$ is saturated, then $Y_{Q,n}^{sc} = \tilde{Y}_{Q,n}^{sc}$. 

An element $\tnu \in X\otimes \R$ is called a saturation of an exceptional character $\nu \in X\otimes \R$ if $\tilde{\nu}(\tilde{\alpha}_{Q,n}^\vee) =1$ for every $\alpha\in \Delta$.
If $\wt{G}$ is saturated, then $\phi(\tnu) = \phi(\nu) \in \Hom(Y_{Q,n}^{sc}, \R)$ for every saturation $\tnu$ of an exceptional $\nu$.

We also recall the notion of a persistent cover as follows (see \cite[Definition 2.3]{Ga6}). Consider
$$\msc{X}_{Q,n}^{sc}:=Y/Y_{Q,n}^{sc}, \ \msc{X}_{Q,n}:=Y/Y_{Q,n},$$
 which are both endowed with the twisted Weyl action 
 $$w[y]:=w(y-\rho^\vee) + \rho^\vee$$
 for every $w$ in the Weyl group $W$.
 Here $\rho^\vee$ is the half sum of all positive coroots in $\Phi^\vee$. For every $y\in Y$, let $y^\dag$ and $y^\ddag$ denote its image in $\msc{X}_{Q,n}^{sc}$ and $\msc{X}_{Q,n}$ respectively.  An $n$-fold cover $\wt{G}$ is called persistent if 
$${\rm Stab}_W(y^\dag; \msc{X}_{Q,n}^{sc}) = {\rm Stab}_W(y^\ddag; \msc{X}_{Q,n})$$
for every $y\in Y$. A saturated cover is always persistent.

\subsection{The set $\mca{N}_{\rm tr}^{\rm max}(\Theta(\pi^\dag, \nu))$}  \label{SS:WF}
For every $\nu \in X\otimes \R$, denote by
$$W_\nu=\set{w\in W: \ w(\nu) - \nu \in X^{sc}} \subset W$$
the integral Weyl subgroup associated with $\nu$. It is a reflection subgroup associated with the root subsystem
\begin{equation} \label{Eq:Phi-nu}
\Phi_\nu = \set{\alpha\in \Phi: \ \angb{\nu}{\alpha^\vee} \in \Z}.
\end{equation}
The MacDonald--Lusztig--Spaltenstein $j$-induction thus gives an irreducible representation $j_{W_\nu}^W (\varepsilon_{W_\nu})$ of $W$, where $\varepsilon_{W_\nu}$ denotes the sign character of $W_\nu$. Write $\varepsilon_\nu= \varepsilon_{W_\nu}$.

Let $\mfr{g}\otimes F^{\rm al}$ be the Lie algebra of $\mbf{G}$ over the algebraically closed field $F^{\rm al}$. Let  $x\in \mfr{g}\otimes F^{\rm al}$ be a nilpotent element and consider $A_x:=\mbf{G}_{ad,x}/(\mbf{G}_{ad,x})^o$. Since $A_x$ depends only on the conjugacy class $\mca{O}_x$ of $x$, for a nilpotent orbit $\mca{O} \subset \mfr{g}\otimes F^{\rm al}$, we use  $A_{\mca{O}}$ to denote $A_x$ for any $x\in \mca{O}$.
Defining
$$\mca{N}^{\rm en}(\mbf{G})=\set{(\mca{O}, \eta): \ \mca{O} \in \mca{N}(\mbf{G}) \text{ and } \eta \in \Irr(A_\mca{O})},$$
the Springer correspondence gives an injective map
$$\begin{tikzcd}
{\rm Spr}^\mbf{G}: \Irr(W) \ar[r, hook] & \mca{N}^{\rm en}(\mbf{G})
\end{tikzcd}$$
denoted by
$${\rm Spr}^\mbf{G}(\sigma)=(\mca{O}_{\rm Spr}^\mbf{G}(\sigma), \eta(\sigma)).$$
If no confusion arises, we will write ${\rm Spr} = {\rm Spr}^\mbf{G}$, $\mca{O}_{\rm Spr}=\mca{O}_{\rm Spr}^\mbf{G}$. We call
$$\mca{O}_{\rm Spr}^\mbf{G}(\sigma) \subset \mfr{g}\otimes F^{\rm al}$$
the nilpotent orbit associated with $\sigma$. Here the normalization is such that  $\mca{O}_{\rm Spr}^\mbf{G}(\mbm{1}) = \mca{O}_{\rm reg}$, and  $\mca{O}_{\rm Spr}^\mbf{G}(\varepsilon_W) = \mca{O}_0$. Note that for every $\mca{O} \in \mca{N}(\mbf{G})$, the pair $(\mca{O}, \mbm{1})$ lies in the image of ${\rm Spr}$, i.e., $(\mca{O}, \mbm{1}) = {\rm Spr}(\sigma_\mca{O})$ for a unique $\sigma_\mca{O} \in \Irr(W)$. This gives us a well-defined injective map
$$\begin{tikzcd}
{\rm Spr}_\mbm{1}^{-1}: \mca{N}(\mbf{G}) \ar[r, hook] & \Irr(W) 
\end{tikzcd}$$
given by ${\rm Spr}_\mbm{1}^{-1}(\mca{O}):={\rm Spr}^{-1}((\mca{O}, \mbm{1}))$.
It is clear that $\mca{O}_{\rm Spr} \circ {\rm Spr}_\mbm{1}^{-1} = \text{id}_{\mca{N}(\mbf{G})}$. However, $ {\rm Spr}_\mbm{1}^{-1}\circ \mca{O}_{\rm Spr}$ may not be the identity map on $\Irr(W)$.

One has the permutation representation
\begin{equation} \label{F:sigmaX}
\sigma^\msc{X}: W \longrightarrow {\rm Perm}(\msc{X}_{Q,n})
\end{equation}
given by the twisted Weyl action $w[y] = w(y-\rho^\vee) + \rho^\vee$. We have the following expectation on the stable wavefront set of $\Theta(\pi^\dag, \nu)$.

\begin{conj}[\cite{GaTs}] \label{C:main}
Let $F$ be $p$-adic with $p\nmid n$. Let $\wt{G}$ be a persistent $n$-fold covering group. Let $\nu \in X\otimes \R$ be exceptional and let $\tilde{\nu} \in X\otimes \R$ be a saturation of $\nu$. Then for the Harish-Chandra local character expansion of $\Theta(\pi^\dag, \nu)$ as in \eqref{E:char}, one has
\begin{equation} \label{E:main1}
\mca{N}_{\rm tr}^{\rm max}(\Theta(\pi^\dag, \nu))\otimes F^{\rm al} = \set{ \mca{O}_{\rm Spr}^\mbf{G}(j_{W_\tnu}^W(\varepsilon_\tnu)) }
\end{equation}
and  
\begin{equation} \label{E:main2}
c_{\mca{O}} = \angb{j_{W_\tnu}^W(\varepsilon_\tnu)}{ \varepsilon_W \otimes \sigma^\msc{X} }_W
\end{equation}
for every orbit $\mca{O} \in \mca{N}_{\rm tr}^{\rm max}(\Theta(\pi^\dag, \nu))$.
\end{conj}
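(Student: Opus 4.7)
The plan is to extend the strategy used for $\wt{\GL}_r^{(n)}$ in Theorem \ref{T:main3} by combining three inputs: the Gomez--Gourevitch--Sahi theorem (Theorem \ref{T:GGS}) identifying $c_\mca{O}$ with the dimension of an $(n,2)$-genuine representation of $\wt{G}_\gamma^{(n,2)}$; the realization of $\Theta(\pi^\dag,\nu)$ as the Langlands quotient of $I(\pi^\dag,\nu)$; and a Lusztig-type link between $j$-induction and the Springer correspondence. For the singleton assertion \eqref{E:main1}, first invoke Theorems \ref{T:GGS} and \ref{T:JLS} to restrict $\mca{N}_\Wh^{\rm max}(\Theta(\pi^\dag,\nu))$ to the set of quasi-admissible and non-raisable orbits catalogued in \S \ref{S:ex-ana}. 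Then, using the Sommers-duality computation of $\mca{O}_{\rm Spr}(j_{W_\tnu}^W \varepsilon_\tnu)$ carried out in \S \ref{SS:som-com} and Theorem \ref{T:main2}, the goal is to show that among those orbits the conjectural orbit is the unique maximal one compatible with the Weyl-group data encoded by $\tnu$.

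For the coefficient identity \eqref{E:main2}, the natural approach is to first compute the twisted Jacquet module $I(\pi^\dag,\nu)_{N_\mca{O},\psi_\mca{O}}$ for a neutral pair $(h,u)$ with $u\in \mca{O}_{\rm Spr}(j_{W_\tnu}^W \varepsilon_\tnu)$ by analyzing the Bruhat filtration of the principal series. On each Bruhat stratum the twisted Jacquet functor contributes a piece controlled by genuine characters of $Z(\wt{T})$, and summing over strata produces a $W$-character that should be identified with $\varepsilon_W \otimes \sigma^\msc{X}$; the sign character enters through the normalization of the Jacquet functor, while $\sigma^\msc{X}$ captures the twisted Weyl orbits on $\msc{X}_{Q,n}$. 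A Lusztig-type formula that expresses the $\mca{O}$-isotypic multiplicity in terms of $j$-induction would then yield the inner product $\angb{j_{W_\tnu}^W \varepsilon_\tnu}{\varepsilon_W \otimes \sigma^\msc{X}}_W$. One must finally argue that passage to the Langlands quotient $\Theta(\pi^\dag,\nu)$ preserves the top-stratum contribution, which in the $\wt{\GL}_r^{(n)}$ case follows from the PL-type structure and the Savin--Cai analysis.

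The main obstacle is the non-PL-type case. When $\mca{O}_{\rm Spr}(j_{W_\tnu}^W \varepsilon_\tnu)$ is the regular orbit of a Levi subgroup $\mbf{M}\subset \mbf{G}$, parabolic descent reduces \eqref{E:main1} and \eqref{E:main2} to the theta representation of $\wt{M}^{(n)}$ and the $\wt{\GL}$-style argument closes the loop. In general no such Levi exists, so the radical of $I(\pi^\dag,\nu)$ may have nonzero twisted Jacquet image on the top stratum, and one cannot directly separate its contribution from that of $\Theta(\pi^\dag,\nu)$. Resolving this likely requires a new ingredient, for instance a refined analysis of intertwining operators along $W_\tnu$, a descent construction in the spirit of \cite{GRS11}, or a Hecke-algebra approach to the Iwahori-fixed vectors of $\Theta(\wt{G})$ modeled on recent work of Ciubotaru--Mason-Brown--Okada in the linear setting.
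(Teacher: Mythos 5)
This statement is a \emph{conjecture}, not a theorem: the paper cites it from \cite{GaTs} and offers no proof of it in general. The paper's contribution is only partial evidence: Theorem \ref{T:comp} shows the conjectural orbit is quasi-admissible and non-raisable (a consistency check, not a derivation of \eqref{E:main1}), and Theorem \ref{T:cO} verifies \eqref{E:main2} for the single family $\wt{\GL}_r^{(n)}$, while \eqref{E:main1} for that family is quoted from the earlier work of Savin and Cai. Your proposal correctly recognizes this state of affairs and, importantly, identifies the same obstruction the authors flag at the end of \S 1.1: without the PL-type hypothesis there is no way to localize the Whittaker computation to a Levi, and the problem is genuinely open. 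So you are not wrong to fail to close the argument; there is nothing in the paper to close it either.

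That said, two specific points in your outline would mislead a reader into thinking more is known than is the case. First, for \eqref{E:main1} you suggest that quasi-admissibility and non-raisability ``single out'' the conjectural orbit. They do not: Propositions \ref{P:qadm} and \ref{P:rais} give a necessary condition (membership in the complement set \eqref{WFc} when they fail), but Tables \ref{table 1}--\ref{table 5} show that for a fixed $n$ several orbits can be quasi-admissible and non-raisable simultaneously, and even the empty stabilizer cases (distinguished orbits) always pass trivially. Nothing in \S\ref{S:q-r}--\ref{S:ex-ana} forces uniqueness of a maximal element, let alone identifies it with $\mca{O}_{\rm Spr}(j_{W_{\tnu}}^W\varepsilon_{\tnu})$. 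Second, for \eqref{E:main2} in the $\wt{\GL}_r^{(n)}$ case your Bruhat-filtration picture is more elaborate than what the paper actually does. The proof of Theorem \ref{T:cO} avoids a stratum-by-stratum analysis: it uses the GGS identity $c_\mca{O}=\dim\pi_{N_\mca{O},\psi_\mca{O}}$ together with \cite[Theorem A]{GGS17} to reduce to a semi-Whittaker model on the Levi $M_\lambda$, then periodicity of theta representations to identify $J_U(\Theta)$ with a Levi theta, then the Whittaker dimension formula $\dim\Wh_\psi(\Theta(\wt{M}_\lambda))=\angb{\varepsilon_{W_\lambda}}{\sigma^\msc{X}}_{W_\lambda}$ from \cite[Proposition 6.2]{Ga6}, and finally the $j$-induction identity ${\rm Ind}_{W_\lambda}^W\varepsilon_{W_\lambda}=j_{W_\lambda}^W\varepsilon_{W_\lambda}+\sum_{\mu>\lambda}j_{W_\mu}^W\varepsilon_{W_\mu}$ to kill the higher terms. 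This chain is purely representation-theoretic and never needs the geometry of individual Bruhat cells. If you wish to frame an approach to the general case, the paper's descent-to-Levi template for PL-type orbits is a better starting point than the Bruhat filtration.
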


In \cite{GaTs}, we showed part of the Conjecture when $\Theta(\pi^\dag, \nu)$ is generic. Compatibility with existing work in the literature was also verified. This verification depends solely on explicating the right hand side of \eqref{E:main1} for several cases of $\wt{G}^{(n)}$ of interest.

\subsection{Method of computation} \label{SS:som-com}
Here, we want to compute the orbit $\mca{O}_{\rm Spr}^\mbf{G}(j_{W_{\tnu}}^W(\varepsilon_{\tnu}))$ explicitly for all covers, at least when $G$ is almost simple and simply-connected or is of the classical type. We show that the method of computation is reduced to the Sommers' duality as in \cite{Som01}.

For any linear algebraic group $\mbf{G}$, we consider another enhanced set of nilpotent orbits 
$$\mca{N}^{\rm en}_{\rm geo}(\mbf{G}):=\set{(\mca{O}, \mfr{c}): \mca{O} \in \mca{N}(\mbf{G}) \text{ and } \mfr{c} \in {\rm Conj}(A_\mca{O})},$$
where ${\rm Conj}(A_\mca{O})$ denotes the set of conjugacy classes of $A_\mca{O}$. There is a quotient map
$$A_\mca{O} \onto \tilde{A}_\mca{O},$$
where $\tilde{A}_\mca{O}$  is the  Lusztig canonical quotient, see \cite[\S 5]{Som01}. This gives a set
$$\tilde{\mca{N}}^{\rm en}_{\rm geo}(\mbf{G}):=\set{(\mca{O}, \tilde{\mfr{c}}): \mca{O} \in \mca{N}(\mbf{G}) \text{ and } \tilde{\mfr{c}} \in {\rm Conj}(\tilde{A}_\mca{O})}$$
together with a natural surjection
$$f: \mca{N}^{\rm en}_{\rm geo}(\mbf{G}) \onto \tilde{\mca{N}}^{\rm en}_{\rm geo}(\mbf{G})$$
given by $(\mca{O}, \mfr{c}) \mapsto (\mca{O}, \tilde{\mfr{c}})$ where $\tilde{\mfr{c}}$ is the image of $\mfr{c}$.

Recall that there is a natural order-reversing bijection on special orbits $\mca{N}^{\rm spe}(\mbf{G}) \subset \mca{N}(\mbf{G})$, which can be extended to give the Lusztig--Spaltenstein map 
$$d_{\rm LS}: \mca{N}(\mbf{G}) \onto \mca{N}^{\rm spe}(\mbf{G}).$$
Now we have the various extensions of $d_{\rm LS}$ as depicted in the following commutative diagram:
\begin{equation} \label{CD:dSom}
\begin{tikzcd}
\mca{N}^{\rm spe}(\mbf{G}) \ar[r, "\iota"] & \mca{N}^{\rm spe}(\mbf{G}^\vee) \ar[r, hook]  & \mca{N}(\mbf{G}^\vee) & \tilde{\mca{N}}^{\rm en}_{\rm geo}(\mbf{G}^\vee) \ar[l, two heads, "{\rm pr}_1"]  \\
& \mca{N}(\mbf{G}) \ar[lu, "{d_{\rm LS}}"]   \ar[u, "{d_{\rm BV}}"] \ar[r, hook] & \mca{N}^{\rm en}_{\rm geo}(\mbf{G})   \ar[u, "{d_{\rm Som}}"] \ar[r, two heads, "f"]  &  \tilde{\mca{N}}^{\rm en}_{\rm geo}(\mbf{G}) \ar[u, "{d_{\rm Ach}}"'] \ar[lu, "{d_{\rm Som}}"] .
\end{tikzcd}
\end{equation}
Here $\iota$ is the canonical bijection arising from the identification of the Weyl group $W$ of $\mbf{G}$ and that of its Langlands dual group $\mbf{G}^\vee$. Since by definition, there is a bijection between special representations of the Weyl group and the special nilpotent orbits, one has the bijection $\iota$. Moreover, the Barbasch--Vogan duality is given by
$$d_{\rm BV}:=\iota \circ d_{\rm LS}.$$
The top inclusion in \eqref{CD:dSom}  is the canonical one, and the bottom inclusion is given by map $\mca{O} \mapsto (\mca{O}, \mbm{1})$. On the other hand, the existence of the extended maps 
$$d_{\rm Som}: \mca{N}^{\rm en}_{\rm geo}(\mbf{G}) \to  \mca{N}(\mbf{G}^\vee)$$ 
and $d_{\rm Ach}$ is not trivial and are given in \cite{Som01} and \cite{Ach03} respectively. It was shown in \cite{Som01} that the map $d_{\rm Som}$ factors through $f$ and thus can be defined on $\tilde{\mca{N}}^{\rm en}_{\rm geo}(\mbf{G})$ as well.

What pertains to our work is the map $d_{\rm Som}$ and thus we give some elaboration on it. Take any 
$$(\mca{O}, \mfr{c}) \in \mca{N}^{\rm en}_{\rm geo}(\mbf{G}).$$
One picks $e \in \mca{O}$ and a semisimple element $s\in Z_\mbf{G}(e)$ such that
$$\phi(s)=\mfr{c},$$
where $\phi: Z_\mbf{G}(e) \onto A_\mca{O}$ is the canonical quotient map. We take 
$$\mbf{L}:=Z_\mbf{G}(s)$$
which is a so-called pseudo-Levi subgroup (see \cite[\S 6]{McSo03}) of $\mbf{G}$ with $e\in \mbf{L}$. Consider the nilpotent orbit $\mca{O}_e^\mbf{L} \subset {\rm Lie}(\mbf{L})$. It is known that there exists $\sigma_e \in \Irr(W(\mbf{L}))$ such that
$${\rm Spr}(\sigma_e) = (d_{\rm LS}(\mca{O}_e^\mbf{L}), \mbm{1}) \in \mca{N}^{\rm en}(\mbf{L}).$$
Hence
$$\sigma_e = {\rm Spr}^{-1}_\mbm{1} \circ d_{\rm LS}(\mca{O}_e^\mbf{L}).$$
Now consider
$$j_{W(\mbf{L})}^W(\sigma_e) \in \Irr(W),$$
where we identify the Weyl groups of $\mbf{G}$ and $\mbf{G}^\vee$. The desired orbit arising from $d_{\rm Som}$ is then
 $$d_{\rm Som}((\mca{O}, \mfr{c}))= \mca{O}_{\rm Spr}^{\mbf{G}^\vee}(j_{W(\mbf{L})}^W(\sigma_e))  \in \mca{N}(\mbf{G}^\vee),$$
 which is independent of the choices of $e$ and $s$. 
 In fact, we have ${\rm Spr}^{\mbf{G}^\vee}(j_{W(\mbf{L})}^W(\sigma_e)) = (d_{\rm Som}(\mca{O}, \mfr{c}), \mbm{1})$.
 
% \textcolor{red}{This is related to the remark in [GaTs].}
  
For $(\mca{O}, \mfr{c})$ above, one can pick $(s, e)$ such that $\mca{O}_e^\mbf{L}$ is a distinguished nilpotent orbit of $\mbf{L}$. In fact, a generalized Bala--Carter classification for $\mca{N}^{\rm en}_{\rm geo}(\mbf{G})$ was constructed by Sommers, using such pairs satisfying the minimal ``key property" \cite[Page 548]{Som98}. Recall that the Bala--Carter classification gives a bijection
$$f_{\rm BC}: \set{(\mbf{L}', \mca{O}_{\mbf{L}'})}  \longrightarrow  \mca{N}(\mbf{G}),$$
where $\mbf{L}' \subset \mbf{G}$ is a Levi subgroup and $\mca{O}_{\mbf{L}'} \subset {\rm Lie}(\mbf{L})$ a distinguished orbit. By considering more generally  a general pseudo-Levi subgroup $\mbf{L} \subset \mbf{G}$ and a distinguished orbit $\mca{O}_\mbf{L} \subset {\rm Lie}(\mbf{L})$, Sommers showed that there is a natural bijection $\set{(\mbf{L}, \mca{O}_\mbf{L})} \to \mca{N}^{\rm en}_{\rm geo}(\mbf{G})$ such that the following diagram
$$\begin{tikzcd}
\set{(\mbf{L}', \mca{O}_{\mbf{L}'})} \ar[d, hook] \ar[r, "f_{\rm BC}"] & \mca{N}(\mbf{G})   \ar[d, hook] \\
\set{(\mbf{L}, \mca{O}_\mbf{L})} \ar[r, "{f_{\rm BC}}"] & \mca{N}^{\rm en}_{\rm geo}(\mbf{G})
\end{tikzcd}$$
commutes, i.e., it extends the classical Bala--Carter classification.

Combining the above, one has a natural map
$$d_{\rm Som}^\heartsuit:= d_{\rm Som} \circ f_{\rm BC}: \set{(\mbf{L}, \mca{O}_\mbf{L})} \longrightarrow \mca{N}(\mbf{G}^\vee).$$
The construction of the map $d_{\rm Som}$ immediately gives the following:

\begin{prop} \label{P:dSom}
Let $\mca{O}$ be a distinguished orbit of a pseudo-Levi $\mbf{L}$. Then 
$$d_{\rm Som}^\heartsuit( (\mbf{L}, \mca{O}) ) =\mca{O}_{\rm Spr}^{\mbf{G}^\vee}(j_{W(\mbf{L})}^W \circ {\rm Spr}_\mbm{1}^{-1} \circ d_{\rm LS}(\mca{O})),$$
where $j_{W(\mbf{L})}^W \circ {\rm Spr}_\mbm{1}^{-1} \circ d_{\rm LS}(\mca{O})$ is viewed as representation of the Weyl group of $\mbf{G}^\vee$. In particular,
$$d_{\rm Som}^\heartsuit(\mbf{L}, \mca{O}_{\rm reg}) = \mca{O}_{\rm Spr}^{\mbf{G}^\vee}(j_{W(\mbf{L})}^W(\varepsilon_{W(\mbf{L})}) ).$$
\end{prop}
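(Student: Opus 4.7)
The plan is to unwind, in order, each of the maps appearing in the definition of $d_{\rm Som}^\heartsuit = d_{\rm Som} \circ f_{\rm BC}$ recalled just before the statement. First I would invoke Sommers' generalized Bala--Carter classification: the pair $(\mbf{L}, \mca{O})$ with $\mca{O}$ a distinguished orbit of the pseudo-Levi $\mbf{L}$ corresponds under $f_{\rm BC}$ to a pair $(\mca{O}_\mbf{G}, \mfr{c}) \in \mca{N}^{\rm en}_{\rm geo}(\mbf{G})$, where $\mca{O}_\mbf{G}$ is the $\mbf{G}$-saturation of $\mca{O}$. The crucial point, which is built into this classification, is that one may choose the auxiliary data $(e, s)$ used to compute $d_{\rm Som}((\mca{O}_\mbf{G}, \mfr{c}))$ — namely $e \in \mca{O}_\mbf{G}$ and $s \in Z_\mbf{G}(e)$ semisimple with $\phi(s)=\mfr{c}$ — so that $Z_\mbf{G}(s)$ is our given $\mbf{L}$ and $\mca{O}_e^\mbf{L}$ is our given distinguished orbit $\mca{O}$.

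Once this compatibility between $f_{\rm BC}$ and the defining data of $d_{\rm Som}$ is in hand, the formula recalled above reads
$$d_{\rm Som}((\mca{O}_\mbf{G}, \mfr{c})) = \mca{O}_{\rm Spr}^{\mbf{G}^\vee}(j_{W(\mbf{L})}^W(\sigma_e)),\qquad \sigma_e = {\rm Spr}_\mbm{1}^{-1} \circ d_{\rm LS}(\mca{O}_e^\mbf{L}) = {\rm Spr}_\mbm{1}^{-1} \circ d_{\rm LS}(\mca{O}),$$
which is exactly the first identity in the proposition. Independence of the result from the choice of $(e, s)$ is guaranteed by Sommers' construction and needs no extra argument.

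For the specialization to $\mca{O} = \mca{O}_{\rm reg} \subset {\rm Lie}(\mbf{L})$, I would invoke two standard facts: (i) the Lusztig--Spaltenstein duality sends the regular orbit to the zero orbit, i.e., $d_{\rm LS}(\mca{O}_{\rm reg}) = \mca{O}_0$; and (ii) under the normalization fixed in \S \ref{SS:WF}, one has ${\rm Spr}^\mbf{L}(\varepsilon_{W(\mbf{L})}) = (\mca{O}_0, \mbm{1})$ and hence ${\rm Spr}_\mbm{1}^{-1}(\mca{O}_0) = \varepsilon_{W(\mbf{L})}$. Substituting these into the general formula yields the advertised expression $\mca{O}_{\rm Spr}^{\mbf{G}^\vee}(j_{W(\mbf{L})}^W(\varepsilon_{W(\mbf{L})}))$.

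There is no serious obstacle here; the statement is a formal consequence of the definitions. The one non-trivial ingredient being used is the compatibility of the generalized Bala--Carter parametrization of $\mca{N}^{\rm en}_{\rm geo}(\mbf{G})$ with the pseudo-Levi data appearing in the definition of $d_{\rm Som}$, and I would simply cite this from \cite{Som98, Som01}.
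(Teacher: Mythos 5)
Your proof is correct and follows essentially the same route the paper takes: the proposition is presented as an immediate consequence of the definitions of $d_{\rm Som}$ and $f_{\rm BC}$, and your unwinding — using Sommers' compatibility between the generalized Bala--Carter classification and the defining pairs $(e,s)$ for $d_{\rm Som}$, together with $d_{\rm LS}(\mca{O}_{\rm reg})=\mca{O}_0$ and ${\rm Spr}_\mbm{1}^{-1}(\mca{O}_0)=\varepsilon_{W(\mbf{L})}$ under the paper's normalization — is exactly the intended argument.
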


For applications purpose we exchange the roles of $\mbf{G}$ and $\mbf{G}^\vee$. In particular, retaining the notations in Conjecture \ref{C:main}, we have a pseudo-Levi subgroup $\mbf{L}^\vee_\tnu$ whose root system is $\Phi_\tnu^\vee$. It thus follows from Proposition \ref{P:dSom} that
\begin{equation} \label{F:dSom}
\mca{O}_{\rm Spr}^\mbf{G} (j_{W_\tnu}^W(\varepsilon_\tnu)) = d_{\rm Som}^\heartsuit(\mbf{L}_\tnu^\vee, \mca{O}_{\rm reg}).
\end{equation}

\subsection{The orbit $\mca{O}_{\rm Spr}(j_{W_\tnu}^W \varepsilon_\tnu)$ for classical groups} \label{SS:WFcl}
For type $A_m$ groups we can take $\tnu = \nu$, and pseudo-Levi groups are all Levi subgroups, and thus every $\mbf{L}_\tnu^\vee$ corresponds to a partition 
$$\mfr{p}_\nu= \mfr{p}_\tnu = (p_1^{d_1} p_2^{d_2} \cdots p_k^{d_k})$$
of $m+1$. Since $d_{\rm Som}^\heartsuit$ extends the Barbasch--Vogan duality, we have
$$\mca{O}_{\rm Spr}^\mbf{G} (j_{W_\tnu}^W(\varepsilon_\tnu)) = \mfr{p}_\nu^\top,$$
the transpose of the partition $\mfr{p}_\nu$. Consider the Kazhdan--Patterson cover $\wt{\GL}_r^{(n)}$. We write $r=an + b$ with $0\lest b < n$. Then  
$$\mfr{p}_\nu = ((a+1)^b a^{n-b})$$
and this gives
$$\mca{O}_{\rm Spr}^\mbf{G} (j_{W_\nu}^W(\varepsilon_\nu)) = (n^ab).$$

For types $B_r, C_r$ and $D_r$, we recall the formula for $d_{\rm Som}(\mca{O}, \mfr{c})$ given in \cite{Som01}, which then gives $d_{\rm Som}^\heartsuit(\mbf{L}, \mca{O}_{\rm reg})$. First, for the pair $(\mca{O}, \mfr{c}) \in \mca{N}^{\rm en}_{\rm geo}(\mbf{G})$, one can choose the aforementioned $(s, e)$ with additional properties:
\begin{enumerate}
\item[--] ${\rm Lie}(\mbf{L})$ has semisimple rank equal to that of $\mfr{g}$, and one has 
$${\rm Lie}(\mbf{L}) = \mfr{l}_1 \oplus \mfr{l}_2,$$
where $\mfr{l}_2$ is a semisimple Lie algebra of the same type as $\mfr{g}$ and $\mfr{l}_1$ a simple Lie algebra containing $-\check{\alpha} \in \check{\Delta}$ in the extended Dynkin diagram with simple roots $\check{\Delta}$.
\item[--] one has $e = e_1 + e_2, e_i \in \mfr{l}_i$ where $e_1$ is a distinguished element in $\mfr{l}_1$.
\end{enumerate}
The Lie algebra $\mfr{l}_1$ is of type $D, C$ or $D$ if $\mfr{g}$ is of type $B, C$ or $D$ respectively. To such pair $(s, e)$, one can attached a pair of partitions
$$(\mfr{p}_1, \mfr{p}_2)$$
corresponding to the orbits $\mca{O}_{e_1}^{\mfr{l}_1}, \mca{O}_{e_2}^{\mfr{l}_2}$ respectively. In fact, one has
$$\mfr{p}_\mca{O} = \mfr{p}_1 \cup \mfr{p}_2.$$

\begin{thm}[{\cite[Theorem 12]{Som01}}]\label{T:Som}
For type $B, C, D$ groups, assume that $(\mfr{p}_1, \mfr{p}_2)$ is associated to $(\mca{O}, \mfr{c})$ as above. Then
$$d_{\rm Som}(\mca{O}, \mfr{c}) = 
\begin{cases}
(\mfr{p}_1 \cup \mfr{p}_2^C)^\top_C & \text{ if $\mfr{g}$ is of type $B$}, \\
 (\mfr{p}_1 \cup \mfr{p}_2^B)^\top_B & \text{ if $\mfr{g}$ is of type $C$}, \\
(\mfr{p}_1 \cup (\mfr{p}_{2, D}^\top)^\top)^\top_D & \text{ if $\mfr{g}$ is of type $D$}.
\end{cases}
$$
\end{thm}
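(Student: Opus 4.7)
The plan is to unwind the definition of $d_{\rm Som}$ recalled in \S \ref{SS:som-com} and reduce the statement to explicit partition manipulations governed by the classical Lusztig--Spaltenstein duality and the compatibility of $j$-induction with the Springer correspondence. For $(\mca{O}, \mfr{c}) \in \mca{N}^{\rm en}_{\rm geo}(\mbf{G})$, one has
$$d_{\rm Som}(\mca{O}, \mfr{c}) = \mca{O}_{\rm Spr}^{\mbf{G}^\vee}\bigl( j_{W(\mbf{L})}^W \sigma_e \bigr), \quad \sigma_e = {\rm Spr}_\mbm{1}^{-1} \circ d_{\rm LS}(\mca{O}_e^\mbf{L}).$$
So the first task is to make a good choice of $(s, e)$: namely, pick $s$ so that $\mbf{L} = Z_\mbf{G}(s)$ has full semisimple rank and ${\rm Lie}(\mbf{L}) = \mfr{l}_1 \oplus \mfr{l}_2$ with $\mfr{l}_2$ of the same type as $\mfr{g}$ and $\mfr{l}_1$ of the ``opposite" classical type attached via the extended Dynkin diagram. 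In each case, one writes $e = e_1 + e_2$ with $e_i$ distinguished in $\mfr{l}_i$, so that $\mca{O}_e^{\mbf{L}}$ is parameterized by the pair of partitions $(\mfr{p}_1, \mfr{p}_2)$.

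The second step is to compute $d_{\rm LS}$ of the distinguished orbit on each factor. For classical types, $d_{\rm LS}$ is given explicitly by transposition followed by the appropriate $B$-, $C$-, or $D$-collapse. Distinguishedness of $e_i$ simplifies the formula: on each factor, the LS-dual is computed by a direct partition recipe. For a product Lie algebra $\mfr{l}_1 \oplus \mfr{l}_2$ with Weyl group $W(\mbf{L}) = W_1 \times W_2$, the map ${\rm Spr}_\mbm{1}^{-1} \circ d_{\rm LS}$ factors as an external tensor product of representations of $W_1$ and $W_2$ under Springer.

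The third step is to carry out the $j$-induction $j_{W(\mbf{L})}^W$ and apply ${\rm Spr}^{\mbf{G}^\vee}$. The crucial input is the classical formula (due to Lusztig and Spaltenstein) that translates $j$-induction from a Weyl subgroup of classical type into a \emph{union} of the partitions associated to the two Springer factors, followed by a collapse determined by the ambient type. Since $\mbf{G}^\vee$ swaps $B \leftrightarrow C$ (while $D$ is self-dual), the collapse used at the final stage is of the type dual to $\mfr{g}$, explaining the subscripts $C, B, D$ in the three cases of the theorem. The extra transpose appearing in the type $D$ case arises because $\mfr{l}_1$ is of type $D$ and one must first pass the $\mfr{p}_2$ block through the $D$-transpose before taking the union.

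The main technical obstacle will be the careful bookkeeping of partition collapses and transposes through these three steps, in particular: verifying that the combinatorial recipe produced by the composition $j_{W(\mbf{L})}^W \circ (\varepsilon \otimes \text{Springer block})$ agrees, block by block, with the formulas for $d_{\rm BV}$ known on special orbits and with the extension provided by Achar in the type $D$ case; and handling the \emph{very even} orbits of type $D$ where $A_\mca{O}$ has extra components that must be tracked. Once these combinatorial identifications are in place, the independence of $d_{\rm Som}(\mca{O}, \mfr{c})$ from the choice of $(s, e)$ follows from the uniqueness inside the fiber, and one obtains the stated formulas.
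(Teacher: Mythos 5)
The paper does not prove Theorem \ref{T:Som}: it is stated as a citation to \cite[Theorem 12]{Som01}, and the subsequent text (the discussion around $(\mfr{p}_1, \mfr{p}_2)$, pseudo-Levis, and distinguished $e_1, e_2$) merely \emph{recalls} Sommers' recipe so it can be applied in \S\ref{SS:WFcl}. There is therefore no in-paper proof to compare against; the honest ``proof'' here is a reference.

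Your sketch is a plausible high-level outline of what Sommers actually does, and the broad moves are right: choose $(s,e)$ so that $\mathbf{L}=Z_\mathbf{G}(s)$ has full semisimple rank with $\mathfrak{l}=\mathfrak{l}_1\oplus\mathfrak{l}_2$ of the indicated types, parameterize $\mca{O}_e^{\mathbf{L}}$ by $(\mfr{p}_1,\mfr{p}_2)$, compute $d_{\rm LS}$ blockwise via transpose-and-collapse, then push through $j_{W(\mathbf{L})}^W$ and the Springer correspondence. You also correctly observe that the final collapse is of the Langlands-dual type (so $C$ for type $B$, $B$ for type $C$, $D$ for type $D$). However, several of the ingredients you list as ``obstacles'' are in fact the heart of Sommers' theorem and are not supplied by your outline: (i) the well-definedness of $d_{\rm Som}$, i.e.\ independence of the choice of $(s,e)$ in the fiber over $(\mca{O},\mfr{c})$, is a substantive result, not a formality; (ii) the combinatorial identity translating $j$-induction of the exterior product of two ``Springer block'' representations into the stated union-then-collapse of partitions needs a careful proof (and your invocation of Achar is slightly off target --- Achar's construction gives $d_{\rm Ach}$ on $\tilde{\mca{N}}^{\rm en}_{\rm geo}$, which is related to but distinct from $d_{\rm Som}$); (iii) the type $D$ formula $(\mfr{p}_1\cup(\mfr{p}_{2,D}^\top)^\top)_D^\top$ hides a genuinely delicate point about which $D$-collapse/transpose to apply and in which order, tied to the very even orbits and the behavior of $A_\mca{O}$, and your sketch gestures at but does not resolve this. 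In short: as a reconstruction of the structure of Sommers' argument your proposal is reasonable, but it is a roadmap rather than a proof, and the paper itself deliberately avoids re-deriving the result by simply citing \cite{Som01}.
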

If $e_1=0$ (equivalently $\mfr{g}_1 = 0$), then $\mfr{p}_1 = \emptyset$; in this case, $d_{\rm Som}$ recovers the Barbasch--Vogan duality. In general, suppose $(\mca{O}, \mfr{c}) = f_{\rm BC}(\mbf{L}, \mca{O})$ for a distinguished $\mca{O}$ of $\mbf{L}$. We can write 
$$\mfr{l} = \mfr{g}_1 \oplus \mfr{g}_2'$$
where $\mfr{g}_1$ is a simple Lie algebra (if nonzero) containing $-\check{\alpha} \in \check{\Delta}$, and $\mfr{g}_2'$ is a Levi subalgebra of $\mfr{g}$. Also $\mca{O} = \mca{O}_{e_1}\oplus \mca{O}_{e_2}$ with $e_1, e_2$ distinguished in $\mfr{g}_1$ and $\mfr{g}_2'$ respectively. Then 
$$\mfr{p}_1 = \mfr{p}_{\mca{O}_{e_1}^{\mfr{g}_1}}.$$ 
On the other hand, let $\mfr{g}_2 \supset \mfr{g}_2'$ be the maximal Levi subalgebra such that its simple roots are disjoint from those of $\mfr{g}_1$ in $\check{\Delta}$. Consider the orbit $\mca{O}_{e_2}^{\mfr{l}_2}$ of $e_2$ in $\mfr{g}_2$, one has
$$\mfr{p}_2 = \mfr{p}_{\mca{O}_{e_2}^{\mfr{g}_2}}.$$ 
The above discussion readily applies to the case  of our interest when $\mca{O}_\mbf{L} = \mca{O}_{\rm reg}$ and thus $e_1$ and $e_2$ are regular in $\mfr{g}_1$ and $\mfr{g}_2'$ respectively.

Consider the cover of $G$ of type $B_r$. We have $\tnu=\nu$. We will consider both the cases of $G=\Spin_{2r+1}$ and $G=\SO_{2r+1}$. For $\wt{G}=\wt{\Spin}_{2r+1}^{(n)}$ with Brylinski--Deligne invariant 
$$\BDI(\wt{\Spin}_{2r+1}^{(n)})=1,$$
one has in this case the exceptional character 
$$\nu =
\begin{cases}
\rho/n & \text{ if $n$ is odd}, \\
2\omega_r/n + (\sum_{1\lest i \lest r-1} \omega_i)/n = \rho(C_r)/n & \text{ if $n$ is even}.
\end{cases}
$$
Now if we write
$$r=na + b \text{ with } 0\lest b <n,$$
then a direct computation gives:
\begin{enumerate}
\item[--] for odd $n=2m+1$ we have
$$
\Phi_\nu^\vee = 
\begin{cases}
C_a \times (A_{2a-1})^{m-b} \times (A_{2a})^b & \text{ if $0\lest b \lest m$}, \\
C_{a+1} \times (A_{2a})^{2m-b+1} \times (A_{2a+1})^{b-m-1} & \text{ if $m+1 \lest b \lest 2m$};
\end{cases}
$$
\item[--] for even $n=2m$ we have
$$
\Phi_\nu^\vee = 
\begin{cases}
C_a  \times C_a \times (A_{2a-1})^{m-1- b} \times (A_{2a})^b & \text{ if $0\lest b \lest m-1$}, \\
C_{a+1} \times  C_a \times  (A_{2a})^{2m-b-1} \times (A_{2a+1})^{b-m} & \text{ if $m \lest b \lest 2m-1$}.
\end{cases}
$$
\end{enumerate}
By applying \eqref{F:dSom} and Theorem \ref{T:Som}, we see that if $n=2m + 1$ is odd, then
$$
\mca{O}_{\rm Spr}(j_{W_\nu}^W \varepsilon_\nu) = \mca{O}^{2r+1, n}_B = 
\begin{cases}
(n^{2a}, 2b + 1) & \text{ if $0\lest b \lest m$},\\
(n^{2a+1}, 2b+1 - n) & \text{ if $m+1\lest b \lest 2m$}.
\end{cases}
$$
On the other hand, if $n=2m$ is even, then
$$
\mca{O}_{\rm Spr}(j_{W_\nu}^W \varepsilon_\nu) = \mca{O}^{2r+1, n}_B = 
\begin{cases}
(n^{2a}, 2b + 1) & \text{ if $0\lest b \lest m-1$},\\
(n^{2a}, n-1, 2b+1 - n, 1) & \text{ if $m\lest b \lest 2m-1$}.
\end{cases}
$$

For $\wt{\SO}_{2r+1}^{(n)}$, it depends on the parity of $n_{\alpha_1}=n/\gcd(n, 2)$. In particular, the exceptional character is of the form
$$\nu =
\begin{cases}
\rho/n_{\alpha_1} & \text{ if $n_{\alpha_1}$ is odd}, \\
2\omega_r/n_{\alpha_1} + (\sum_{1\lest i \lest r-1} \omega_i)/n_{\alpha_r} = \rho(C_r)/n_{\alpha_1} & \text{ if $n_{\alpha_1}$ is even}.
\end{cases}
$$
Thus, the root subsystem associated to an exceptional character of $\wt{\SO}_{2r+1}^{(n)}$ equals to the root subsystem associated to that of $\wt{\Spin}_{2r+1}^{(n_{\alpha_1})}$. This gives us the column for $\SO_{2r+1}$ in Table \ref{table 6}.

Now for $\wt{\Sp}_{2r}^{(n)}$ with $n$ odd, the detailed computation of $\mca{O}_{\rm Spr}(j_{W_\tnu}^W \varepsilon_\tnu)$ was already given in \cite[\S 4]{GaTs}. For $n$ even, similar analysis gives the orbit, as remarked in \cite[Remark 4.2]{GaTs}. The result is given in the column of $\Sp_{2r}$ in Table \ref{table 6}.

Consider group of type $D_r, r\gest 2$, where again we analyze both $\Spin_{2r}$ and $\SO_{2r}$. Consider $\Spin_{2r}$ and its $n$-fold cover with $\BDI(\wt{\Spin}_{2r}^{(n)})=1$, the exceptional character is 
$$\nu = \rho/n.$$
We write $r-1 = na + b$ with $0\lest b < n$. If $n=2m +1$ is odd, then the root subsystem is
$$
\Phi_\nu^\vee = 
\begin{cases}
D_{a+1} \times (A_{2a-1})^{m-b} \times (A_{2a})^b & \text{ if $0\lest b \lest m$}, \\
D_{a+1} \times (A_{2a})^{2m-b} \times (A_{2a+1})^{b-m} & \text{ if $m+1 \lest b \lest 2m$};
\end{cases}
$$
also, if $n=2m$ is even, then we have
$$
\Phi_\nu^\vee = 
\begin{cases}
D_{a+1}  \times D_a \times (A_{2a-1})^{m-1- b} \times (A_{2a})^b & \text{ if $0\lest b \lest m-1$}, \\
D_{a+1} \times  D_{a+1} \times  (A_{2a})^{2m-b-1} \times (A_{2a+1})^{b-m} & \text{ if $m \lest b \lest 2m-1$}.
\end{cases}
$$
By applying Theorem \ref{T:Som}, we see that if $n=2m + 1$ is odd, then
$$
\mca{O}_{\rm Spr}(j_{W_\nu}^W \varepsilon_\nu) = \mca{O}^{2r, n}_D = 
\begin{cases}
(n^{2a}, 2b + 1, 1) & \text{ if $0\lest b \lest m$},\\
(n^{2a+1}, 2b+2 - n) & \text{ if $m+1\lest b \lest 2m$}.
\end{cases}
$$
On the other hand, if $n=2m$ is even, then
$$
\mca{O}_{\rm Spr}(j_{W_\nu}^W \varepsilon_\nu) = (n+1, \mca{O}^{2r-n-1, n}_D) = 
\begin{cases}
(n+1, n^{2a-2}, n-1,  2b + 1, 1) & \text{ if $0\lest b \lest m-1$},\\
(n+1, n^{2a}, 2b+1 - n) & \text{ if $m\lest b \lest 2m-1$}.
\end{cases}
$$

The case of $\wt{\SO}_{2r}^{(n)}$ depends on the parity of $n_{\alpha}=n/\gcd(n, 2)$, where $\alpha$ is any root. In particular, the exceptional character is of the form
$$\nu = \rho/n_\alpha $$
Thus, the root subsystem associated to an exceptional character of $\wt{\SO}_{2r}^{(n)}$ equals to the root subsystem associated to that of $\wt{\Spin}_{2r}^{(n_\alpha)}$. This gives us the column for $\SO_{2r}$ in Table \ref{table 6}.

\begin{table}[H] 
\caption{$\mca{O}_{\rm Spr}(j_{W_\tnu}^W \varepsilon_\tnu)$ for classical groups} \label{table 6}
\vskip 5pt
\renewcommand{\arraystretch}{1.4}
\begin{tabular}{|c|c|c|c|c|c|}
\hline
 & $\wt{\GL}_r^{(n)}$  &    $\wt{\SO}_{2r+1}^{(n)}$  & $\wt{\Sp}_{2r}^{(n)}$ &  $\wt{\SO}_{2r}^{(n)}$       \\
\hline
\hline
$n$ odd   & $\mca{O}^{r, n}$   &  $\mca{O}^{2r +1, n}_{B}$   & $\mca{O}^{2r, n}_{C}$    &  $\mca{O}^{2r, n}_D$         \\
% \hline
$n=2m, m$ even & $\mca{O}^{r, n}$    & $\mca{O}^{2r+1, m}_B$  & $\mca{O}^{2r, m}_{C}$    & $(m + 1, \mca{O}^{2r -m-1, m}_D)$      \\
% \hline
$n=2k, k$ odd  & $\mca{O}^{r, n}$   & $\mca{O}^{2r+1, k}_B$  & $(k+1, \mca{O}^{2r-k-1, k}_{C})$   &  $\mca{O}^{2r, k}_D$    \\
\hline
%$ \val{Y^\exc_n}=1 $ & all $n$  & odd $n$  & all $n$ & all $n$ & all $n$ \\
% & & & & & \\
% \hline
\end{tabular}
\end{table}

\begin{rmk}
In a recent work \cite{BMW} by Bai--Ma--Wang, the authors devised two algorithms to compute the nilpotent orbit associated with highest (real) weight modules for all classical Lie algebras, which partially extend the recipes and some relevant results given in \cite{BX19, BXX23}. One such algorithm in \cite{BMW} is the so-called ``partition algorithm" and is based on Sommers' duality in \cite{Som01}. In particular, one can check that Table \ref{table 6} can be recovered by applying \cite[\S 1.2, Theorem]{BMW}  to $\tnu$ associated with an exceptional $\nu \in X\otimes \R$.
\end{rmk}

\subsection{The orbit $\mca{O}_{\rm Spr}(j_{W_\tnu}^W \varepsilon_\tnu)$ for exceptional groups}
For exceptional groups, $\tnu = \nu$ and the computation follows from \eqref{F:dSom} and the tables in \cite[\S 9]{Som01}. Thus, we have Table \ref{table 7} for $\wt{G}_2^{(n)}$.

\begin{table}[H] 
\caption{$\mca{O}_{\rm Spr}(j_{W_\nu}^W \varepsilon_\nu)$ for $\wt{G}_2^{(n)}$}
\label{table 7}
\vskip 5pt
\renewcommand{\arraystretch}{1.3}
\begin{tabular}{|c|c|c|c|c|c|}
\hline
$n$ & $\Phi_\chi$  &    $j_{W_\nu}^W \varepsilon_\nu$  & $\mca{O}_{\rm Spr}(j_{W_\nu}^W \varepsilon_\nu)$ &  $\dim \mca{O}$       \\
\hline
\hline
1   & $G_2$  & $\phi_{1, 6}$   & $\set{0}$   & 0     \\
\hline
2 & $\tilde{A}_1 + A_1$ & $\phi_{2, 2}$  & $\tilde{A}_1$ & 8   \\
\hline
3  & $\tilde{A}_2$  & $\phi_{1, 3}''$  & $A_1$ & 6    \\
  \hline
$4, 5, 6, 9$  & $\tilde{A}_1$  & $\phi_{2, 1}$  & $G_2(a_1)$ & 10     \\
\hline
$7, 8$ or $\gest 10$ & $\emptyset$ & $\phi_{1, 0}$   & $G_2$ & 12     \\
\hline
%$ \val{Y^\exc_n}=1 $ & all $n$  & odd $n$  & all $n$ & all $n$ & all $n$ \\
% & & & & & \\
% \hline
\end{tabular}
\end{table}

For $\wt{F}_4^{(n)}$, we have Table \ref{table 8}.

\begin{table}[H] 
\caption{$\mca{O}_{\rm Spr}(j_{W_\nu}^W \varepsilon_\nu)$ for $\wt{F}_4^{(n)}$}
\label{table 8}
\vskip 5pt
\renewcommand{\arraystretch}{1.3}
\begin{tabular}{|c|c|c|c|c|c|}
\hline
$n$ & $\Phi_\chi$  &    $j_{W_\nu}^W \varepsilon_\nu$  & $\mca{O}_{\rm Spr}(j_{W_\nu}^W \varepsilon_\nu)$ &  $\dim \mca{O}$       \\
\hline
\hline
1   & $F_4$  & $\phi_{1, 24}$  & $\set{0}$   &  0        \\
\hline
2 & $C_4$ & $\phi_{2, 16}''$  & $A_1$ & 16     \\
\hline
3  & $\tilde{A}_2 + A_2$  & $\phi_{6, 6}$  &  $\tilde{A}_2 + A_1$  & 36     \\
  \hline
4  & $A_3 + A_1$  & $\phi_{4, 7}''$  & $A_2 + \tilde{A}_1$ & 34      \\
\hline
5, 6 & $\tilde{A}_2 + A_1$ & $\phi_{12, 4}$   & $F_4(a_3)$ & 40      \\
\hline
7, 10 & $\tilde{A}_1 + A_1$ & $\phi_{9,2}$ & $F_4(a_2)$ & 44  \\
 \hline
8 & $\tilde{A}_2$ & $\phi_{8, 3}''$ & $B_3$ & 42   \\
\hline
9, 12, 14 & $\tilde{A}_1$ & $\phi_{4,1}$ & $F_4(a_1)$ & 46  \\
\hline
11, 13 or $\gest 15$ & $\emptyset$  & $\phi_{1, 0}$ & $F_4$  & 48    \\
\hline
\end{tabular}
\end{table}

For $\wt{E}_6^{(n)}$, we have Table \ref{table 9}.

\begin{table}[H] 
\caption{$\mca{O}_{\rm Spr}(j_{W_\nu}^W \varepsilon_\nu)$ for $\wt{E}_6^{(n)}$}
\label{table 9}
\vskip 5pt
\renewcommand{\arraystretch}{1.3}
\begin{tabular}{|c|c|c|c|c|c|}
\hline
$n$ & $\Phi_\chi$  &    $j_{W_\nu}^W \varepsilon_\nu$  & $\mca{O}_{\rm Spr}(j_{W_\nu}^W \varepsilon_\nu)$ &  $\dim \mca{O}$   \\
\hline
\hline
1   & $E_6$ & $\phi_{1, 36}$ & $\set{0}$  & 0         \\
\hline
2 & $A_5 + A_1$ & $\phi_{15, 16}$  & $3 A_1$ & 40     \\
\hline
3  & $3A_2$  & $\phi_{10, 9}$  &  $2A_2 + A_1$  & 54     \\
  \hline
4  & $2 A_2 + A_1$  & $\phi_{80, 7}$  & $D_4(a_1)$ & 60      \\
\hline
5 & $A_2 + 2 A_1$ & $\phi_{60, 5}$   & $A_4 + A_1$ & 62     \\
\hline
6, 7 & $3A_1$ & $\phi_{30, 3}$ & $E_6(a_3)$ & 66  \\
 \hline
8 & $ 2A_1$ & $\phi_{20, 2}$ & $D_5$ & 68   \\
\hline
9, 10, 11 & $A_1$ & $\phi_{6, 1}$ & $E_6(a_1)$ & 70  \\
\hline
$\gest 12$ & $\emptyset$  & $\phi_{1, 0}$ & $E_6$  & 72    \\
\hline
\end{tabular}
\end{table}

For $\wt{E}_7^{(n)}$, we have Table \ref{table 10}.

\begin{table}[!htbp] 
\caption{$\mca{O}_{\rm Spr}(j_{W_\nu}^W \varepsilon_\nu)$ for $\wt{E}_7^{(n)}$}
\label{table 10}
\vskip 5pt
\renewcommand{\arraystretch}{1.3}
\begin{tabular}{|c|c|c|c|c|c|}
\hline
$n$ & $\Phi_\chi$  &    $j_{W_\nu}^W \varepsilon_\nu$  & $\mca{O}_{\rm Spr}(j_{W_\nu}^W \varepsilon_\nu)$ &  $\dim \mca{O}$      \\
\hline
\hline
1   & $E_7$  & $\phi_{1, 63}$ & $\set{0}$   & 0         \\
\hline
2 & $A_7$ & $\phi_{15,28}$  & $4 A_1$ & 70     \\
\hline
3  & $A_5 + A_2$  & $\phi_{70, 18}$  &  $2A_2 + A_1$  & 90    \\
  \hline
4  & $A_4 + A_2$  & $\phi_{210, 13}$  & $A_3 + A_2 +A_1$ & 100      \\
\hline
5 & $A_3 + A_2 + A_1$ & $\phi_{210, 10}$   & $A_4 + A_2$ & 106     \\
\hline
6 & $A_2 + A_2 + A_1$ & $\phi_{315, 7}$ & $E_7(a_5)$ & 112  \\
 \hline
7 & $A_2 + 3 A_1$ & $\phi_{105, 6}$ & $A_6$ & 114   \\
 \hline
8 & $A_2 + 2 A_1$ & $\phi_{189, 5}$ & $E_7(a_4)$ & 116   \\
\hline
9 & $(4 A_1)''$ & $\phi_{120,4}$ & $E_6(a_1)$ & 118  \\
\hline
10, 11& $(3A_1)'$  & $\phi_{56,3}$ & $E_7(a_3)$  & 120    \\
\hline
12, 13 & $ 2A_1 $  & $\phi_{27, 2}$ & $E_7(a_2)$  & 122    \\
\hline
14, 15, 16, 17 & $ A_1$  & $\phi_{7,1}$ & $E_7(a_1)$  & 124    \\
\hline
$\gest 18$ & $\emptyset$ & $\phi_{1,0}$ & $E_7$ &  126   \\
\hline
\end{tabular}
\end{table}

For $\wt{E}_8^{(n)}$, we have Table \ref{table 11}.

\begin{table}[!htbp] 
\caption{$\mca{O}_{\rm Spr}(j_{W_\nu}^W \varepsilon_\nu)$ for $\wt{E}_8^{(n)}$}
\label{table 11}
\vskip 5pt
\renewcommand{\arraystretch}{1.3}
\begin{tabular}{|c|c|c|c|c|c|}
\hline
$n$ & $\Phi_\chi$  &    $j_{W_\nu}^W \varepsilon_\nu$  & $\mca{O}_{\rm Spr}(j_{W_\nu}^W \varepsilon_\nu)$ &  $\dim \mca{O}$  \\
\hline
\hline
1   & $E_8$   & $\phi_{1, 120}$ & $\set{0}$  & 0         \\
\hline
2 & $D_8$ & $\phi_{50, 56}$  & $4 A_1$ & 128    \\
\hline
3  & $A_8$  & $\phi_{175,36}$  &  $2A_2 + 2A_1$  & 168    \\
  \hline
4  & $D_5 + A_3$  & $\phi_{840,26}$  & $2A_3$ & 188    \\
\hline
5 & $A_4 + A_4$ & $\phi_{420, 20}$   & $A_4 + A_3$ & 200     \\
\hline
6 & $A_4 + A_3$ & $\phi_{4480, 16}$ & $E_8(a_7)$ & 208  \\
 \hline
7 & $A_4 + A_2 + A_1$ & $\phi_{2835,14}$ & $A_6 + A_1$ & 212  \\
\hline
8 & $A_3 + A_2 + 2 A_1$ & $\phi_{1400, 11}$ & $A_7$ & 218  \\
\hline
9 & $A_3 + A_2 + A_1$  & $\phi_{2240, 10}$ & $E_8(b_6)$  & 220    \\
\hline
10, 11 & $ 2A_2 + 2 A_2$  & $\phi_{1400, 8}$ & $E_8(a_6)$  & 224    \\
\hline
12, 13 & $A_2  + 3 A_1 $ & $\phi_{700, 6}$ & $E_8(a_5)$ &  228   \\
\hline
14 & $A_2 + 2 A_1 $ & $\phi_{560,5}$ & $E_8(b_4)$ &  230   \\
\hline
15, 16, 17 & $(4A_1)' $ & $\phi_{210,4}$ & $E_8(a_4)$ &  232   \\
\hline
18, 19 & $ 3 A_1$ & $\phi_{112,3}$ & $E_8(a_3)$ &  234  \\
\hline
20, 21, 22, 23 & $2A_1 $ & $\phi_{35,2}$ & $E_8(a_2)$ &  236   \\
\hline
24, 25, 26, 27, 28, 29 & $ A_1$ & $\phi_{8,1}$ & $E_8(a_1)$ &  238   \\
\hline
$\gest 30$ & $\emptyset$ & $\phi_{1,0}$ & $E_8$ & 240  \\
\hline 
\end{tabular}
\end{table}

\subsection{Compatibility and remarks}

In view of the above tables and the criterion of quasi-admissibility and raisability discussed in \S \ref{S:ex-ana}, we have the following:

\begin{thm} \label{T:comp}
Let $\wt{G}$ be any of the covering $\wt{\GL}_r^{(n)}, \wt{\SO}_{2r+1}^{(n)}, \wt{\Sp}_{2r}^{(n)}, \wt{\SO}_{2r}^{(n)}, \wt{G}_2^{(n)}, \wt{F}_4^{(n)}$ and $\wt{E}_r^{(n)}, 6\lest r \lest 8$ discussed above. Assume that $\wt{G}$ is persistent. Consider the $F$-split orbit $\mca{O}_\Theta$ of type $\mca{O}_{\rm Spr}(j_{W_\tnu}^W \varepsilon_\tnu)$ of $G$.
\begin{enumerate}
\item[(i)] The orbit $\mca{O}_\Theta$ is quasi-admissible and non-raisable.
\item[(ii)] If the orbit $\mca{O}_\Theta$ is the regular orbit of a Levi subgroup of $G$, then it supports certain generalized Whittaker models of the theta representation $\Theta(\wt{G}, \nu)$.
\end{enumerate}
\end{thm}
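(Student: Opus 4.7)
The plan is to prove the two parts separately, with part (i) reducing to a table cross-check and part (ii) requiring a parabolic-induction argument together with an asymptotic analysis.

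For part (i), the quasi-admissibility and non-raisability of $\mca{O}_\Theta$ will be verified case-by-case against the criteria of \S \ref{S:ex-ana}. In type $A$, the orbit $\mca{O}_\Theta = (n_\alpha^a b)$ has every multi-block of size $n_\alpha$, so Theorem \ref{T:typeA}(i) yields quasi-admissibility and the hypothesis of Theorem \ref{T:typeA}(ii) is never met, so $\mca{O}_\Theta$ is non-raisable. For classical types $B, C, D$ one handles the three sub-cases in Table \ref{table 6} (odd $n$; $n = 2m$ with $m$ even; $n = 2k$ with $k$ odd) separately using Theorems \ref{T:tyBD} and \ref{T:tyC}. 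The key observation, already implicit in Example \ref{E:Sp-orb}, is that every block of multiplicity at least $2$ has size $n$ or $n/2$, and the parities of $\mfr{A}(\mfr{p}_\mca{O}, \cdot)$ and $\mfr{B}(\mfr{p}_\mca{O}, \cdot)$ align with the parity of $n$ so that both the divisibility and parity conditions for quasi-admissibility are met while those triggering raisability fail. For exceptional groups the verification is purely tabular: one cross-references Tables \ref{table 7}--\ref{table 11} with Tables \ref{table 1}, \ref{table 2} (for $G_2, F_4$) and the case-analyses in \S \ref{S:ex-ana} (for $E_6, E_7, E_8$).

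For part (ii), suppose $\mca{O}_\Theta$ is the regular nilpotent orbit of a Levi $\mbf{M} \subset \mbf{G}$, with parabolic $\mbf{P} = \mbf{M}\mbf{N}$. Decompose $\nu = \nu_M + \nu^M$ with $\nu_M$ exceptional for $\wt{M}$, and realize
$$I(\pi^\dag, \nu) \simeq \Ind_{\wt{P}}^{\wt{G}}\bigl(I_{\wt{B}_M}^{\wt{M}}(\pi_M^\dag \otimes \delta_{\nu_M}) \otimes \delta_{\nu^M}\bigr),$$
with $\Theta(\wt{G}, \nu)$ as its Langlands quotient. Since $\mca{O}_\Theta$ is regular in $\mfr{m}$, the theta representation $\Theta(\wt{M}, \nu_M)$ of $\wt{M}$ is classically generic (i.e., the regular orbit is in its wavefront set); this is exactly the generic case of Conjecture \ref{C:main} already established in \cite{GaTs}. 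Composing a non-zero Whittaker functional $\lambda_M$ on $\Theta(\wt{M}, \nu_M)$ with integration over $N$ against the trivial character produces, by a standard Mackey/Bruhat calculation, a non-zero generalized Whittaker functional on $I(\pi^\dag, \nu)$ attached to a neutral pair $(h, u)$ with $u \in \mca{O}_\Theta$.

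The main obstacle is to show that this functional survives the passage to the Langlands quotient $\Theta(\wt{G}, \nu)$, equivalently that it does not vanish on the kernel of $I(\pi^\dag, \nu) \onto \Theta(\wt{G}, \nu)$. The plan is to combine part (i) with Theorems \ref{T:GGS} and \ref{T:JLS}: any irreducible constituent of the kernel has wavefront set bounded by an orbit that is either strictly smaller than $\mca{O}_\Theta$ in the closure order, or raisable, or not quasi-admissible; in any such case its twisted Jacquet module at $\mca{O}_\Theta$ vanishes. Executing this step rigorously requires a Casselman--Jacquet analysis of the composition series of $I(\pi^\dag, \nu)$ along $\wt{P}$ together with a dimension count of the $\mca{O}_\Theta$-twisted Jacquet module of the full principal series, and it is here that the bulk of the technical work of the proof is concentrated.
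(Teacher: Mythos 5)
Your treatment of part (i) matches the paper's: a direct cross-referencing of Tables \ref{table 6}--\ref{table 11} against the criteria of Theorems \ref{T:typeA}, \ref{T:tyBD}, \ref{T:tyC} and Tables \ref{table 1}--\ref{table 5}. That part is fine.

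For part (ii), your route diverges significantly from the paper's, and the step you correctly flag as the ``main obstacle'' contains a genuine gap that your proposed fix does not close. You construct a generalized Whittaker functional on the full induced representation $I(\pi^\dag, \nu)$ and then try to show it descends to the Langlands quotient, arguing that every irreducible constituent of the kernel has wavefront set ``bounded by an orbit that is strictly smaller, raisable, or not quasi-admissible.'' But part (i) says the \emph{opposite}: $\mca{O}_\Theta$ \emph{is} quasi-admissible and non-raisable, so nothing in Theorems \ref{T:GGS} or \ref{T:JLS} forbids a constituent of the kernel from having $\mca{O}_\Theta$ in its wavefront set. Raisability and quasi-admissibility are intrinsic to the orbit, not to the representation, so they cannot discriminate between $\Theta(\wt{G},\nu)$ and the kernel. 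Moreover, even a constituent whose \emph{leading} orbit is incomparable to or larger than $\mca{O}_\Theta$ can still have nonzero twisted Jacquet module at $\mca{O}_\Theta$; vanishing is only guaranteed for orbits \emph{strictly outside} the closure of the wavefront set. No ``Casselman--Jacquet analysis'' or ``dimension count'' is sketched that would repair this.

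The paper avoids the descent-through-the-quotient problem entirely by never working on the principal series. It applies \cite[Theorem A]{GGS17} \emph{directly to $\Theta(\wt{G},\nu)$}: for a regular-in-a-Levi orbit there is a Whittaker pair $(S,u)$ for which the degenerate Whittaker model $\pi_{S,u}$ equals the semi-Whittaker model, i.e.\ the Whittaker model of the Jacquet module $J_U(\Theta)$, and the generalized Whittaker model $\pi_{N_{\mca{O}},\psi_{\mca{O}}}$ \emph{surjects} onto $\pi_{S,u}$. By periodicity of theta representations (\cite[Theorem 2.3]{BFrG2}), $J_U(\Theta(\wt{G},\nu))$ is again a theta representation $\Theta(\wt{L}_{\mca O})$ on the Levi. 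So the whole claim reduces to showing $\Theta(\wt{L}_{\mca O})$ is generic, which is a finite numerical check via the formula $\dim \Wh_\psi(\Theta(\wt{L}_\mca{O})) = \angb{\varepsilon_{W(L_\mca{O})}}{\sigma^\msc{X}}_{W(L_\mca{O})}$ from \cite[Proposition 6.2]{Ga6} together with the non-vanishing criterion of \cite[\S 3]{GaTs}. This sidesteps your obstacle completely: there is no kernel to worry about because the surjection runs in the favourable direction from the quantity you want to bound below. You should restructure part (ii) along these lines rather than attempting to push a functional through the Langlands quotient.
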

\begin{proof}
First, (i) follows from comparing Tables 6--11 with Theorems \ref{T:typeA}, \ref{T:tyBD}, \ref{T:tyC} and Tables 1--5. 

For (ii), recall from \cite{GGS17} that to any $\pi \in \Irrg(\wt{G})$ and any Whittaker pair $(S, u)$ of $\mca{O}_u$, there is a degenerate Whittaker model $\pi_{S, u}$ such that one has a $\wt{G}$-equivariant surjection
$$\pi_{N_\mca{O}, \psi_\mca{O}} \onto \pi_{S, u},$$
see \cite[Theorem A]{GGS17}. If $\mca{O}_u$ is the split regular orbit of a Levi subgroup $L$, then one can pick $(S, u)$ such that $\pi_{S, u}$ is equal to the semi-Whittaker models of $\pi$, i.e., the Whittaker model of the Jacquet model $J_U(\pi)$ with respect to the unipotent radical $U$ of the parabolic subgroup $P=LU$.

Applying the above to $\Theta(\wt{G}, \nu)$, by the periodicity of theta representations (see \cite[Theorem 2.3]{BFrG2}), one has $J_U(\Theta(\wt{G}, \nu))$ is a theta representation on the Levi subgroup $\wt{L}$. Thus, it suffices to show that every theta representation $\Theta(\wt{L}_\mca{O})$ on the Levi subgroup $\wt{L}_\mca{O}$ associated to such $\mca{O}_\Theta$ is generic. It follows from \cite[Proposition 6.2]{Ga6} that
$$\dim \Wh_\psi(\Theta(\wt{L}_\mca{O})) =\angb{\varepsilon_{W(L_\mca{O})}}{\sigma^\msc{X}}_{W(L_\mca{O})},$$
where $W(L_\mca{O})$ denotes the Weyl group of $L_\mca{O}$. Moreover, an explicit numerical criterion on the non-vanishing of $\dim \Wh_\psi(\Theta(\wt{L}_\mca{O}))$ is given in \cite[\S 3]{GaTs}. The result then follows from a direct check by using results in loc. cit. and Tables 6--11. We will illustrate this by considering  $\wt{\Sp}_{2r}^{(n)}$ and $\wt{E}_8^{(n)}$ as examples.

The cover $\wt{\Sp}_{2r}^{(n)}$ is persistent if and only if $n$ is odd or $n=2m$ with $m$ even. If $n$ is odd, then by Example \ref{E:Sp-orb} we see that (for $2r=an + b, 0\lest b <n$):
\begin{enumerate}
\item[--] If $a$ and $b$ are both even, then $\mca{O}_\Theta=(n^a b)$ is the principal orbit of a Levi with
$$L_\mca{O} = \Sp_b \times \prod_{j=1}^{a/2} \GL_n.$$
In this case, $\Theta(\wt{L}_\mca{O})$ is generic by \cite[Proposition 5.1]{Ga2} or \cite[Lemma 3.3]{GaTs}.
\item[--] If $a$ and $b$ are both odd, then $\mca{O}_\Theta$ principal orbit of a Levi only when $b=n-2$ and in this case
$$L_\mca{O} =\GL_{n-1} \times \prod_{j=1}^{(a-1)/2} \GL_n.$$
Again, $\Theta(\wt{L}_\mca{O})$ is generic in this case.
\end{enumerate}
Now if $n=2m$ with $m$ even, then the orbit $\mca{O}_\Theta=(m^a b)$ with $2r=am + b$, which is always the principal orbit of the Levi subgroup
$$L_\mca{O} = \Sp_b \times \prod_{i=1}^{a/2} \GL_m.$$
In this case, $\Theta(\wt{L}_\mca{O})$ is generic by results in \cite{Ga2} and \cite{GaTs} as well.

For $\wt{E}_8^{(n)}$ it follows immediately that the non-trivial orbits $\mca{O}_\Theta$ which are regular orbits of Levi subgroups are
$$4A_1, 2A_2 + 2A_1, 2A_3, A_4 + A_3, A_6+ A_1, A_7,$$
which are associated with $n=2, 3, 4, 5, 7, 8$ respectively. It follows that $\Theta(\wt{L}_\mca{O})$ is always generic in this case.

Other groups can be checked in the same way, and this completes the proof.
\end{proof}

\section{The coefficient $c_\mca{O}$ for $\Theta(\wt{\GL}_r^{(n)})$} \label{S:cO}
In this last section, we verify the equality \eqref{E:main2} in Conjecture  \ref{C:main} regarding the leading coefficient $c_{\mca{O}}$ for theta representation of covers of $\GL_r$. Note that the equality \eqref{E:main1} for $\wt{\GL}_r^{(n)}$ is due to Savin \cite{Sav2} and Cai \cite{Cai19}. 

\begin{thm} \label{T:cO}
Consider a Kazhdan--Patterson cover $\wt{\GL}_r^{(n)}$. Then for every unramified theta representation $\Theta(\pi^\dag, \nu)$, one has 
$$c_\mca{O} =\angb{j_{W_\nu}^W(\varepsilon_{W_\nu})}{ \varepsilon_W \otimes \sigma^\msc{X} }_W,$$
where $\mca{O} = (n^a b)$ is the unique orbit in $\mca{N}^{\rm max}_{\rm tr}(\Theta(\pi^\dag, \nu))$ with $r=na + b, 0\lest b <n$.
\end{thm}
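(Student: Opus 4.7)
The plan is to compute $c_\mca{O} = \dim_\C \Theta(\pi^\dag,\nu)_{N_\mca{O},\psi_\mca{O}}$ by reducing the generalized Whittaker model to an ordinary Whittaker model on a Levi subgroup, and then translating the resulting dimension into the claimed pairing via properties of $j$-induction in type $A$. The key structural input is that $\mca{O}=(n^ab)$ is of PL-type: it is the regular orbit of the block-diagonal Levi $L_\mca{O}\simeq (\GL_n)^a\times\GL_b$ of $\GL_r$, and the integral Weyl subgroup $W_\nu$ coincides with the Weyl group $W(L_\mca{O}) \simeq (S_n)^a \times S_b$.

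First, I would invoke \cite[Theorem A]{GGS17} together with the PL-type refinement from \cite{GGS21} to identify $\Theta(\pi^\dag,\nu)_{N_\mca{O},\psi_\mca{O}}$ with the semi-Whittaker model attached to the parabolic $P=L_\mca{O}U$ associated with $\mca{O}$. Since $\mca{O}$ is the unique maximal element of $\mca{N}_\Wh^{\rm max}(\Theta(\pi^\dag,\nu))$ by the Cai--Savin result \eqref{WFGL}, the a priori surjection appearing in the proof of Theorem~\ref{T:comp}(ii) is in fact an isomorphism, since any nontrivial kernel would produce a strictly larger orbit in the wavefront set, contradicting maximality. By the periodicity of theta representations, the Jacquet module $J_U(\Theta(\pi^\dag,\nu))$ is a theta representation $\Theta(\wt{L}_\mca{O})$ on $\wt{L}_\mca{O}$, so
$$c_\mca{O} \ = \ \dim_\C \Wh_\psi\bigl(\Theta(\wt{L}_\mca{O})\bigr).$$
Applying \cite[Proposition 6.2]{Ga6} factor by factor on $\wt{L}_\mca{O}$, and using the compatibility of the twisted $W$-action on $\msc{X}_{Q,n}$ with the block decomposition, this yields
$$c_\mca{O} \ = \ \angb{\varepsilon_{W_\nu}}{\sigma^\msc{X}}_{W_\nu}.$$

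The remaining task is the purely representation-theoretic identity
$$\angb{\varepsilon_{W_\nu}}{\sigma^\msc{X}}_{W_\nu} \ = \ \angb{j_{W_\nu}^W(\varepsilon_{W_\nu})}{\varepsilon_W \otimes \sigma^\msc{X}}_W.$$
By Frobenius reciprocity the left-hand side equals $\angb{\Ind_{W_\nu}^W \varepsilon_{W_\nu}}{\sigma^\msc{X}}_W$, so after tensoring with $\varepsilon_W$ it suffices to show that among the irreducible constituents of $\varepsilon_W \otimes \Ind_{W_\nu}^W(\varepsilon_{W_\nu})$, only the summand $j_{W_\nu}^W(\varepsilon_{W_\nu})$ pairs nontrivially with $\varepsilon_W \otimes \sigma^\msc{X}$. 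In type $A$ this induction is multiplicity-free with constituents indexed by partitions dominating $\lambda^\top=(n^ab)^\top$, and $j_{W_\nu}^W(\varepsilon_{W_\nu})$ is precisely the Specht module attached to $\lambda^\top$. The main obstacle will be to pin down the Specht-module content of $\varepsilon_W \otimes \sigma^\msc{X}$ sharply enough to isolate this single constituent; here the explicit twisted-action description \eqref{F:sigmaX} of $\sigma^\msc{X}$ on $\msc{X}_{Q,n}$ and its compatibility with the block decomposition of $W_\nu$, combined with a dominance-order argument (or equivalently a Pieri/Littlewood--Richardson computation), should be decisive.
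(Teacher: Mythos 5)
There is a genuine error in the setup that propagates through the rest of the argument. You assert that the integral Weyl subgroup $W_\nu$ coincides with $W(L_\mca{O}) \simeq (S_n)^a \times S_b$. This is false: as the paper records in \S\ref{SS:WFcl} and again explicitly in the proof of Theorem~\ref{T:cO}, the exceptional $\nu$ has $\Phi_\nu$ corresponding to the partition $\lambda^\top = ((a+1)^b a^{n-b})$, the \emph{transpose} of $\lambda = (n^a b)$, so $W_\nu = W_{\lambda^\top} \simeq (S_{a+1})^b \times (S_a)^{n-b}$, whereas $W(L_\mca{O}) = W_\lambda \simeq (S_n)^a \times S_b$. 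These are conjugate partitions, not the same Young subgroup. Consequently your identity $c_\mca{O} = \angb{\varepsilon_{W_\nu}}{\sigma^\msc{X}}_{W_\nu}$ is wrong; the correct intermediate step from \cite[Proposition 6.2]{Ga6} is $c_\mca{O} = \angb{\varepsilon_{W_\lambda}}{\sigma^\msc{X}}_{W_\lambda}$. (A quick sanity check: for $r=5$, $n=2$ we have $\lambda=(2,2,1)$ and $\lambda^\top=(3,2)$; the Whittaker model on $\wt{\GL}_3^{(2)}$ is zero since $3>n$, so your formula would give $c_\mca{O}=0$, which cannot be right.) The duality between $\lambda$ and $\lambda^\top$ is not a cancellation — it is exactly what the sign twist $\varepsilon_W \otimes (-)$ in the target pairing is accounting for via $\varepsilon_W \otimes j_{W_{\lambda^\top}}^W(\varepsilon_{W_{\lambda^\top}}) = j_{W_\lambda}^W(\varepsilon_{W_\lambda})$ (\cite[Cor.\ 5.4.9]{GePf}).

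Once the correct starting point $\angb{\varepsilon_{W_\lambda}}{\sigma^\msc{X}}_{W_\lambda} = \angb{\Ind_{W_\lambda}^W\varepsilon_{W_\lambda}}{\sigma^\msc{X}}_W$ is in place, what remains is precisely the identity $\angb{\Ind_{W_\lambda}^W\varepsilon_{W_\lambda}}{\sigma^\msc{X}}_W = \angb{j_{W_\lambda}^W\varepsilon_{W_\lambda}}{\sigma^\msc{X}}_W$. Your suggestion to analyze the Specht-module content of $\sigma^\msc{X}$ via dominance order is in the right spirit, but it is left entirely vague, and the claim that the induction is multiplicity-free is incorrect in general (Kostka numbers can exceed one). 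The paper's argument is both cleaner and closes the gap: every constituent $j_{W_\mu}^W(\varepsilon_{W_\mu})$ with $\mu > \lambda$ appearing in the Geck--Pfeiffer decomposition of $\Ind_{W_\lambda}^W(\varepsilon_{W_\lambda})$ satisfies $\angb{j_{W_\mu}^W(\varepsilon_{W_\mu})}{\sigma^\msc{X}}_W \le \angb{\varepsilon_{W_\mu}}{\sigma^\msc{X}}_{W_\mu} = \dim\Wh_\psi(\Theta(\wt{M}_\mu))$, which vanishes because $\mu>\lambda$ forces some part of $\mu$ to exceed $n$. Separately, your claim that maximality of $\mca{O}$ forces the surjection of \cite[Theorem A]{GGS17} to be an isomorphism is not justified as stated: having a nontrivial kernel on the degenerate side does not manufacture a larger orbit in the wavefront set. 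The paper instead invokes \cite{Pate} and \cite[Theorem 1.5]{GGS21} directly for this identification.
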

\begin{proof}
For any partition $\mu \in \msc{P}(r)$ of $r$, we denote by $M_\mu \subset \GL_r$ the standard Levi subgroup. The Weyl group of $M_\mu$ is denoted by $W_\mu \subset W$. We write
$$\lambda:=(n^a b) \in \msc{P}(r).$$
Since $\mca{N}^{\rm max}_{\rm tr}(\Theta(\pi^\dag, \nu)) =\set{\lambda}$, then \cite{Pate} and \cite[Theorem 1.5]{GGS21} together with the periodicity of theta representations (see \cite{Cai19}) give the first equality in 
$$c_\mca{O}=\dim \Wh_\psi(\Theta(\wt{M}_\lambda^{(n)})) =\angb{\varepsilon_{W_\lambda}}{\sigma^\msc{X}}_{W_\lambda} = \angb{{\rm Ind}_{W_\lambda}^W (\varepsilon_{W_\lambda})}{\sigma^\msc{X}}_W  ,$$
where the theta representation $\wt{M}_\lambda^{(n)}$ is associated with $\pi^\dag$ and $\nu$, and the second equality follows from \cite[Proposition 6.2]{Ga6}. 
On the other hand, the Weyl subgroup $W_\nu$ is a parabolic Weyl subgroup associated with 
$$\lambda^\top =((a+1)^b a^{n-b}).$$
Thus,
$$\begin{aligned}
\angb{j_{W_\nu}^W(\varepsilon_{W_\nu})}{ \varepsilon_W \otimes \sigma^\msc{X} }_W & = \angb{j_{W_{\lambda^\top}}^W(\varepsilon_{W_{\lambda^\top}})}{ \varepsilon_W \otimes \sigma^\msc{X} }_W \\
& = \angb{\varepsilon_W \otimes j_{W_{\lambda^\top}}^W(\varepsilon_{W_{\lambda^\top}})}{  \sigma^\msc{X} }_W \\
& = \angb{ j_{W_\lambda}^W(\varepsilon_{W_\lambda}) }{ \sigma^\msc{X} }_W,
\end{aligned} $$
where the last equality follows from \cite[Corollary 5.4.9]{GePf}.

We have 
$${\rm Ind}_{W_\lambda}^W (\varepsilon_{W_\lambda}) = j_{W_\lambda}^W(\varepsilon_{W_\lambda}) + \sum_{\substack{\lambda \lest \mu \\ \mu \ne \lambda}}  j_{W_\mu}^W(\varepsilon_{W_\mu}),$$
see \cite[Theorem 5.4.7]{GePf}. Also, for any $\mu > \lambda$, 
$$\angb{ j_{W_\mu}^W(\varepsilon_{W_\mu}) }{ \sigma^\msc{X} }_W \lest \angb{ \Ind_{W_\mu}^W(\varepsilon_{W_\mu}) }{ \sigma^\msc{X} }_W = \angb{ \varepsilon_{W_\mu} }{ \sigma^\msc{X} }_{W_\mu}= \dim \Wh_\psi(\Theta(\wt{M}_\mu)),$$
where the last equality follows from \cite{Ga6}. Since $\mu > \lambda$, there is a component in the partition of $\mu$ strictly greater than $n$ and thus one has 
$$\dim \Wh_\psi(\Theta(\wt{M}_\mu))=0.$$
All the above together give that $c_\mca{O} = \angb{j_{W_\nu}^W(\varepsilon_{W_\nu})}{ \varepsilon_W \otimes \sigma^\msc{X} }_W$.  This completes the proof.
\end{proof}

Note that the restriction to Kazhdan--Patterson cover of $\GL_r$ is only for convenience. Results in Theorem \ref{T:cO} and other parts of paper hold for general Brylinski--Deligne covers of $\GL_r$, where the only essential alternation in the statement is to replace $n$ by $n_\alpha$.

%\begin{rmk}
%A special case here concerns the $(kr, c)$-representation in the sense of Kaplan. See \cite[Proposition 15, Page 14]{Kaplan}.
%\end{rmk}

%%%%%%%%%%%%%%% %%%%%%%%%%%%%%%%%%%%%%%%%%%%%%%%%%%%%%%%%
\begin{bibdiv}
\begin{biblist}[\resetbiblist{9999999}]*{labels={alphabetic}}

% \bibselect{MyAMSRefs}

\bib{Ach03}{article}{
  author={Achar, Pramod N.},
  title={An order-reversing duality map for conjugacy classes in Lusztig's canonical quotient},
  journal={Transform. Groups},
  volume={8},
  date={2003},
  number={2},
  pages={107--145},
  issn={1083-4362},
  review={\MR {1976456}},
  doi={10.1007/s00031-003-0422-x},
}

\bib{ABPTV}{article}{
  author={Adams, Jeffrey},
  author={Barbasch, Dan},
  author={Paul, Annegret},
  author={Trapa, Peter E.},
  author={ Vogan, David A., Jr.},
  title={Unitary Shimura correspondences for split real groups},
  journal={J. Amer. Math. Soc.},
  volume={20},
  date={2007},
  number={3},
  pages={701--751},
  issn={0017-095X},
  review={\MR {3151110}},
  doi={10.3336/gm.48.2.07},
}

\bib{BX19}{article}{
  author={Bai, Zhanqiang},
  author={Xie, Xun},
  title={Gelfand-Kirillov dimensions of highest weight Harish-Chandra modules for $SU(p,q)$},
  journal={Int. Math. Res. Not. IMRN},
  date={2019},
  number={14},
  pages={4392--4418},
  issn={1073-7928},
  review={\MR {3984073}},
  doi={10.1093/imrn/rnx247},
}

\bib{BXX23}{article}{
  author={Bai, Zhanqiang},
  author={Xiao, Wei},
  author={Xie, Xun},
  title={Gelfand--Kirillov Dimensions and Associated Varieties of Highest Weight Modules},
  journal={Int. Math. Res. Not. IMRN},
  date={2023},
  number={10},
  pages={8101--8142},
  issn={1073-7928},
  review={\MR {4589071}},
  doi={10.1093/imrn/rnac081},
}

\bib{BMW}{article}{
  author={Bai, Zhanqiang},
  author={Ma, Jiajun},
  author={Wang, Yutong},
  title={A combinatorial characterization of the annihilator varieties of highest weight modules for classical Lie algebras},
  status={preprint, available at https://arxiv.org/abs/2304.03475v1},
}

\bib{BJ1}{article}{
  author={Ban, Dubravka},
  author={Jantzen, Chris},
  title={The Langlands quotient theorem for finite central extensions of $p$-adic groups},
  journal={Glas. Mat. Ser. III},
  volume={48(68)},
  date={2013},
  number={2},
  pages={313--334},
  issn={0017-095X},
  review={\MR {3151110}},
  doi={10.3336/gm.48.2.07},
}

\bib{BV3}{article}{
  author={Barbasch, Dan},
  author={Vogan, David A., Jr.},
  title={The local structure of characters},
  journal={J. Functional Analysis},
  volume={37},
  date={1980},
  number={1},
  pages={27--55},
  issn={0022-1236},
  review={\MR {576644}},
  doi={10.1016/0022-1236(80)90026-9},
}

\bib{BV4}{article}{
  author={Barbasch, Dan},
  author={Vogan, David},
  title={Primitive ideals and orbital integrals in complex classical groups},
  journal={Math. Ann.},
  volume={259},
  date={1982},
  number={2},
  pages={153--199},
  issn={0025-5831},
  review={\MR {656661}},
}

\bib{BV5}{article}{
  author={Barbasch, Dan},
  author={Vogan, David},
  title={Primitive ideals and orbital integrals in complex exceptional groups},
  journal={J. Algebra},
  volume={80},
  date={1983},
  number={2},
  pages={350--382},
  issn={0021-8693},
  review={\MR {691809}},
}

\bib{BoTi73}{article}{
  author={Borel, Armand},
  author={Tits, Jacques},
  title={Homomorphismes ``abstraits'' de groupes alg\'{e}briques simples},
  language={French},
  journal={Ann. of Math. (2)},
  volume={97},
  date={1973},
  pages={499--571},
  issn={0003-486X},
  review={\MR {316587}},
  doi={10.2307/1970833},
}

\bib{BouL2}{book}{
  author={Bourbaki, Nicolas},
  title={Lie groups and Lie algebras. Chapters 4--6},
  series={Elements of Mathematics (Berlin)},
  note={Translated from the 1968 French original by Andrew Pressley},
  publisher={Springer-Verlag, Berlin},
  date={2002},
  pages={xii+300},
  isbn={3-540-42650-7},
  review={\MR {1890629}},
  doi={10.1007/978-3-540-89394-3},
}

\bib{BD}{article}{
  author={Brylinski, Jean-Luc},
  author={Deligne, Pierre},
  title={Central extensions of reductive groups by $\bold K_2$},
  journal={Publ. Math. Inst. Hautes \'Etudes Sci.},
  number={94},
  date={2001},
  pages={5--85},
  issn={0073-8301},
  review={\MR {1896177}},
  doi={10.1007/s10240-001-8192-2},
}

\bib{BFrG2}{article}{
  author={Bump, Daniel},
  author={Friedberg, Solomon},
  author={Ginzburg, David},
  title={Small representations for odd orthogonal groups},
  journal={Int. Math. Res. Not.},
  date={2003},
  number={25},
  pages={1363--1393},
  issn={1073-7928},
  review={\MR {1968295}},
  doi={10.1155/S1073792803210217},
}

\bib{BFrG}{article}{
  author={Bump, Daniel},
  author={Friedberg, Solomon},
  author={Ginzburg, David},
  title={Lifting automorphic representations on the double covers of orthogonal groups},
  journal={Duke Math. J.},
  volume={131},
  date={2006},
  number={2},
  pages={363--396},
  issn={0012-7094},
  review={\MR {2219245}},
}

\bib{BFG}{article}{
  author={Bump, Daniel},
  author={Furusawa, Masaaki},
  author={Ginzburg, David},
  title={Non-unique models in the Rankin-Selberg method},
  journal={J. Reine Angew. Math.},
  volume={468},
  date={1995},
  pages={77--111},
  issn={0075-4102},
  review={\MR {1361787}},
}

\bib{Cai19}{article}{
  author={Cai, Yuanqing},
  title={Fourier coefficients for theta representations on covers of general linear groups},
  journal={Trans. Amer. Math. Soc.},
  volume={371},
  date={2019},
  number={11},
  pages={7585--7626},
  issn={0002-9947},
  review={\MR {3955529}},
  doi={10.1090/tran/7429},
}

\bib{Car}{book}{
  author={Carter, Roger W.},
  title={Finite groups of Lie type},
  series={Wiley Classics Library},
  note={Conjugacy classes and complex characters; Reprint of the 1985 original; A Wiley-Interscience Publication},
  publisher={John Wiley \& Sons, Ltd., Chichester},
  date={1993},
  pages={xii+544},
  isbn={0-471-94109-3},
  review={\MR {1266626}},
}

\bib{CMO}{article}{
  author={Ciubotaru, Dan},
  author={Mason-Brown, Lucas},
  author={Okada, Emile T.},
  title={The wavefront sets of Iwahori-spherical representations of reductive $p$-adic groups},
  status={preprint, available at https://arxiv.org/abs/2112.14354v4},
}

\bib{CM}{book}{
  author={Collingwood, David H.},
  author={McGovern, William M.},
  title={Nilpotent orbits in semisimple Lie algebras},
  series={Van Nostrand Reinhold Mathematics Series},
  publisher={Van Nostrand Reinhold Co., New York},
  date={1993},
  pages={xiv+186},
  isbn={0-534-18834-6},
  review={\MR {1251060}},
}

\bib{Duf80}{article}{
  author={Duflo, Michel},
  title={Construction de repr\'{e}sentations unitaires d'un groupe de Lie},
  language={French, with English summary},
  conference={ title={Harmonic analysis and group representations}, },
  book={ publisher={Liguori, Naples}, },
  date={1982},
  pages={129--221},
  review={\MR {777341}},
}

\bib{FG15}{article}{
  author={Friedberg, Solomon},
  author={Ginzburg, David},
  title={Metaplectic theta functions and global integrals},
  journal={J. Number Theory},
  volume={146},
  date={2015},
  pages={134--149},
  issn={0022-314X},
  review={\MR {3267113}},
  doi={10.1016/j.jnt.2014.04.001},
}

\bib{FG17-2}{article}{
  author={Friedberg, Solomon},
  author={Ginzburg, David},
  title={Theta functions on covers of symplectic groups},
  journal={Bull. Iranian Math. Soc.},
  volume={43},
  date={2017},
  number={4},
  pages={89--116},
  issn={1017-060X},
  review={\MR {3711824}},
}

\bib{FG18}{article}{
  author={Friedberg, Solomon},
  author={Ginzburg, David},
  title={Descent and theta functions for metaplectic groups},
  journal={J. Eur. Math. Soc. (JEMS)},
  volume={20},
  date={2018},
  number={8},
  pages={1913--1957},
  issn={1435-9855},
  review={\MR {3854895}},
  doi={10.4171/JEMS/803},
}

\bib{FG20}{article}{
  author={Friedberg, Solomon},
  author={Ginzburg, David},
  title={Classical theta lifts for higher metaplectic covering groups},
  journal={Geom. Funct. Anal.},
  volume={30},
  date={2020},
  number={6},
  pages={1531--1582},
  issn={1016-443X},
  review={\MR {4182832}},
  doi={10.1007/s00039-020-00548-y},
}

\bib{GG}{article}{
  author={Gan, Wee Teck},
  author={Gao, Fan},
  title={The Langlands-Weissman program for Brylinski-Deligne extensions},
  language={English, with English and French summaries},
  note={L-groups and the Langlands program for covering groups},
  journal={Ast\'erisque},
  date={2018},
  number={398},
  pages={187--275},
  issn={0303-1179},
  isbn={978-2-85629-845-9},
  review={\MR {3802419}},
}

\bib{Ga2}{article}{
  author={Gao, Fan},
  title={Distinguished theta representations for certain covering groups},
  journal={Pacific J. Math.},
  volume={290},
  date={2017},
  number={2},
  pages={333--379},
  doi={10.2140/pjm.2017.290.333},
}

\bib{Ga6}{article}{
  author={Gao, Fan},
  title={Kazhdan--Lusztig representations and Whittaker space of some genuine representations},
  journal={Math. Ann.},
  volume={376},
  date={2020},
  number={1},
  pages={289--358},
  doi={10.1007/s00208-019-01925-1},
}

\bib{GSS1}{article}{
  author={Gao, Fan},
  author={Shahidi, Freydoon},
  author={Szpruch, Dani},
  title={On the local coefficients matrix for coverings of $\rm SL_2$},
  conference={ title={Geometry, algebra, number theory, and their information technology applications}, },
  book={ series={Springer Proc. Math. Stat.}, volume={251}, publisher={Springer, Cham}, },
  date={2018},
  pages={207--244},
  review={\MR {3880389}},
}

\bib{GaTs}{article}{
  author={Gao, Fan},
  author={Tsai, Wan-Yu},
  title={On the wavefront sets associated with theta representations},
  journal={Math. Z.},
  volume={301},
  date={2022},
  number={1},
  pages={1--40},
  issn={0025-5874},
  review={\MR {4405642}},
  doi={10.1007/s00209-021-02894-5},
}

\bib{GePf}{book}{
  author={Geck, Meinolf},
  author={Pfeiffer, G\"{o}tz},
  title={Characters of finite Coxeter groups and Iwahori-Hecke algebras},
  series={London Mathematical Society Monographs. New Series},
  volume={21},
  publisher={The Clarendon Press, Oxford University Press, New York},
  date={2000},
  pages={xvi+446},
  isbn={0-19-850250-8},
  review={\MR {1778802}},
}

\bib{Gin0}{article}{
  author={Ginzburg, David},
  title={Certain conjectures relating unipotent orbits to automorphic representations},
  journal={Israel J. Math.},
  volume={151},
  date={2006},
  pages={323--355},
  issn={0021-2172},
  review={\MR {2214128}},
  doi={10.1007/BF02777366},
}

\bib{GRS11}{book}{
  author={Ginzburg, David},
  author={Rallis, Stephen},
  author={Soudry, David},
  title={The descent map from automorphic representations of ${\rm GL}(n)$ to classical groups},
  publisher={World Scientific Publishing Co. Pte. Ltd., Hackensack, NJ},
  date={2011},
  pages={x+339},
  isbn={978-981-4304-98-6},
  isbn={981-4304-98-0},
  review={\MR {2848523}},
  doi={10.1142/9789814304993},
}

\bib{GGS17}{article}{
  author={Gomez, Raul},
  author={Gourevitch, Dmitry},
  author={Sahi, Siddhartha},
  title={Generalized and degenerate Whittaker models},
  journal={Compos. Math.},
  volume={153},
  date={2017},
  number={2},
  pages={223--256},
  issn={0010-437X},
  review={\MR {3705224}},
  doi={10.1112/S0010437X16007788},
}

\bib{GGS21}{article}{
  author={Gomez, Raul},
  author={Gourevitch, Dmitry},
  author={Sahi, Siddhartha},
  title={Whittaker supports for representations of reductive groups},
  language={English, with English and French summaries},
  journal={Ann. Inst. Fourier (Grenoble)},
  volume={71},
  date={2021},
  number={1},
  pages={239--286},
  issn={0373-0956},
  review={\MR {4275869}},
}

\bib{GoSa15}{article}{
  author={Gourevitch, Dmitry},
  author={Sahi, Siddhartha},
  title={Degenerate Whittaker functionals for real reductive groups},
  journal={Amer. J. Math.},
  volume={137},
  date={2015},
  number={2},
  pages={439--472},
  issn={0002-9327},
  review={\MR {3337800}},
  doi={10.1353/ajm.2015.0008},
}

\bib{Gur23}{article}{
  author={Gurevich, Maxim},
  title={A triangular system for local character expansions of Iwahori-spherical representations of general linear groups},
  language={English, with English and French summaries},
  journal={C. R. Math. Acad. Sci. Paris},
  volume={361},
  date={2023},
  pages={21--30},
  issn={1631-073X},
  review={\MR {4538538}},
  doi={10.5802/crmath.384},
}

\bib{HC99}{book}{
  author={Harish-Chandra},
  title={Admissible invariant distributions on reductive $p$-adic groups},
  series={University Lecture Series},
  volume={16},
  note={With a preface and notes by Stephen DeBacker and Paul J. Sally, Jr.},
  publisher={American Mathematical Society, Providence, RI},
  date={1999},
  pages={xiv+97},
  isbn={0-8218-2025-7},
  review={\MR {1702257}},
  doi={10.1090/ulect/016},
}

\bib{How1}{article}{
  author={Howe, Roger},
  title={The Fourier transform and germs of characters (case of ${\rm Gl}_{n}$ over a $p$-adic field)},
  journal={Math. Ann.},
  volume={208},
  date={1974},
  pages={305--322},
  issn={0025-5831},
  review={\MR {342645}},
  doi={10.1007/BF01432155},
}

\bib{JaNo05}{article}{
  author={Jackson, Steven Glenn},
  author={No\"{e}l, Alfred G.},
  title={Prehomogeneous spaces associated with complex nilpotent orbits},
  journal={J. Algebra},
  volume={289},
  date={2005},
  number={2},
  pages={515--557},
  issn={0021-8693},
  review={\MR {2142384}},
  doi={10.1016/j.jalgebra.2005.02.017},
}

\bib{Jia14}{article}{
  author={Jiang, Dihua},
  title={Automorphic integral transforms for classical groups I: Endoscopy correspondences},
  conference={ title={Automorphic forms and related geometry: assessing the legacy of I. I. Piatetski-Shapiro}, },
  book={ series={Contemp. Math.}, volume={614}, publisher={Amer. Math. Soc., Providence, RI}, },
  date={2014},
  pages={179--242},
  review={\MR {3220929}},
  doi={10.1090/conm/614/12253},
}

\bib{JiLi16}{article}{
  author={Jiang, Dihua},
  author={Liu, Baiying},
  title={Fourier coefficients for automorphic forms on quasisplit classical groups},
  conference={ title={Advances in the theory of automorphic forms and their $L$-functions}, },
  book={ series={Contemp. Math.}, volume={664}, publisher={Amer. Math. Soc., Providence, RI}, },
  date={2016},
  pages={187--208},
  review={\MR {3502983}},
  doi={10.1090/conm/664/13062},
}

\bib{JL22}{article}{
  author={Jiang, Dihua},
  author={Liu, Baiying},
  title={On wavefront sets of global Arthur packets of classical groups: upper bound},
  journal={J. European Math. Society.},
  volume={},
  date={2022},
  pages={},
  review={},
  doi={},
}

\bib{JLS}{article}{
  author={Jiang, Dihua},
  author={Liu, Baiying},
  author={Savin, Gordan},
  title={Raising nilpotent orbits in wave-front sets},
  journal={Represent. Theory},
  volume={20},
  date={2016},
  pages={419--450},
  review={\MR {3564676}},
  doi={10.1090/ert/490},
}

\bib{JLZ}{article}{
  author={Jiang, Dihua},
  author={Liu, Dongwen},
  author={Zhang, Lei},
  title={Arithmetic wavefront sets and generic $L$-packets},
  status={preprint, available at https://arxiv.org/abs/2207.04700},
}

\bib{JiZh17}{article}{
  author={Jiang, Dihua},
  author={Zhang, Lei},
  title={Automorphic integral transforms for classical groups II: Twisted descents},
  conference={ title={Representation theory, number theory, and invariant theory}, },
  book={ series={Progr. Math.}, volume={323}, publisher={Birkh\"{a}user/Springer, Cham}, },
  date={2017},
  pages={303--335},
  review={\MR {3753916}},
}

\bib{JiZh20}{article}{
  author={Jiang, Dihua},
  author={Zhang, Lei},
  title={Arthur parameters and cuspidal automorphic modules of classical groups},
  journal={Ann. of Math. (2)},
  volume={191},
  date={2020},
  number={3},
  pages={739--827},
  issn={0003-486X},
  review={\MR {4088351}},
  doi={10.4007/annals.2020.191.3.2},
}

\bib{Kap17-1}{article}{
  author={Kaplan, Eyal},
  title={The double cover of odd general spin groups, small representations, and applications},
  journal={J. Inst. Math. Jussieu},
  volume={16},
  date={2017},
  number={3},
  pages={609--671},
  issn={1474-7480},
  review={\MR {3646283}},
  doi={10.1017/S1474748015000250},
}

\bib{Li3}{article}{
  author={Li, Wen-Wei},
  title={La formule des traces pour les rev\^etements de groupes r\'eductifs connexes. II. Analyse harmonique locale},
  language={French, with English and French summaries},
  journal={Ann. Sci. \'Ec. Norm. Sup\'er. (4)},
  volume={45},
  date={2012},
  number={5},
  pages={787--859},
  issn={0012-9593},
  review={\MR {3053009}},
  doi={10.24033/asens.2178},
}

\bib{LS22a}{article}{
  author={Liu, Baiying},
  author={Shahidi, Freydoon},
  title={Jiang's conjecture on local Arthur packets},
  journal={Kudla Proceedings},
  volume={},
  date={2022},
  pages={},
  review={},
  doi={},
}

\bib{LS22b}{article}{
  author={Liu, Baiying},
  author={Shahidi, Freydoon},
  title={Jiang's conjecture on the wave front sets of local Arthur packets},
  journal={Preprint},
  volume={},
  date={2022},
  pages={},
  review={},
  doi={},
}

\bib{LX21}{article}{
  author={Liu, Baiying},
  author={Xu, Bin},
  title={On top Fourier coefficients of certain automorphic representations of $\mathrm {GL}_n$},
  journal={Manuscripta Math.},
  fjournal={Manuscripta Mathematica},
  volume={164},
  year={2021},
  number={1-2},
  pages={1--22},
  issn={0025-2611},
  mrclass={11F70 (11F30 22E50 22E55)},
  mrnumber={4203681},
  mrreviewer={\Dbar \cftil {o} Ng\d {o}c Di\cfudot {e}p},
  doi={10.1007/s00229-019-01176-z},
  url={https://doi-org.ezproxy.lib.purdue.edu/10.1007/s00229-019-01176-z},
}

\bib{McSo03}{article}{
  author={McNinch, George J.},
  author={Sommers, Eric},
  title={Component groups of unipotent centralizers in good characteristic},
  note={Special issue celebrating the 80th birthday of Robert Steinberg},
  journal={J. Algebra},
  volume={260},
  date={2003},
  number={1},
  pages={323--337},
  issn={0021-8693},
  review={\MR {1976698}},
  doi={10.1016/S0021-8693(02)00661-0},
}

\bib{Moe96}{article}{
  author={M\oe glin, C.},
  title={Front d'onde des repr\'{e}sentations des groupes classiques $p$-adiques},
  language={French, with French summary},
  journal={Amer. J. Math.},
  volume={118},
  date={1996},
  number={6},
  pages={1313--1346},
  issn={0002-9327},
  review={\MR {1420926}},
}

\bib{MW1}{article}{
  author={M\oe glin, C.},
  author={Waldspurger, J.-L.},
  title={Mod\`eles de Whittaker d\'eg\'en\'er\'es pour des groupes $p$-adiques},
  language={French},
  journal={Math. Z.},
  volume={196},
  date={1987},
  number={3},
  pages={427--452},
  issn={0025-5874},
  review={\MR {913667}},
}

\bib{Nev99}{article}{
  author={Nevins, Monica},
  title={Admissible nilpotent coadjoint orbits of $p$-adic reductive Lie groups},
  journal={Represent. Theory},
  volume={3},
  date={1999},
  pages={105--126},
  review={\MR {1698202}},
  doi={10.1090/S1088-4165-99-00072-2},
}

\bib{Nev02}{article}{
  author={Nevins, Monica},
  title={Admissible nilpotent orbits of real and $p$-adic split exceptional groups},
  journal={Represent. Theory},
  volume={6},
  date={2002},
  pages={160--189},
  review={\MR {1915090}},
  doi={10.1090/S1088-4165-02-00134-6},
}

\bib{Oka}{article}{
  author={Okada, Emile T.},
  title={The wavefront set of spherical Arthur representations},
  status={preprint, available at https://arxiv.org/abs/2107.10591v1},
}

\bib{Pate}{article}{
  author={Prakash Patel, Shiv},
  title={A theorem of M\oe glin and Waldspurger for covering groups},
  journal={Pacific J. Math.},
  volume={273},
  date={2015},
  number={1},
  pages={225--239},
  issn={0030-8730},
  review={\MR {3290452}},
}

\bib{Sav94}{article}{
  author={Savin, Gordan},
  title={Dual pair $G_{\scr J}\times {\rm PGL}_2$ [where] $G_{\scr J}$ is the automorphism group of the Jordan algebra ${\scr J}$},
  journal={Invent. Math.},
  volume={118},
  date={1994},
  number={1},
  pages={141--160},
  issn={0020-9910},
  review={\MR {1288471}},
  doi={10.1007/BF01231530},
}

\bib{Sav2}{article}{
  author={Savin, Gordan},
  title={A nice central extension of ${\rm GL}_r$},
  status={preprint},
}

\bib{Ser-gc}{book}{
  author={Serre, Jean-Pierre},
  title={Galois cohomology},
  series={Springer Monographs in Mathematics},
  edition={Corrected reprint of the 1997 English edition},
  note={Translated from the French by Patrick Ion and revised by the author},
  publisher={Springer-Verlag, Berlin},
  date={2002},
  pages={x+210},
  isbn={3-540-42192-0},
  review={\MR {1867431}},
}

\bib{Som98}{article}{
  author={Sommers, Eric},
  title={A generalization of the Bala-Carter theorem for nilpotent orbits},
  journal={Internat. Math. Res. Notices},
  date={1998},
  number={11},
  pages={539--562},
  issn={1073-7928},
  review={\MR {1631769}},
  doi={10.1155/S107379289800035X},
}

\bib{Som01}{article}{
  author={Sommers, Eric},
  title={Lusztig's canonical quotient and generalized duality},
  journal={J. Algebra},
  volume={243},
  date={2001},
  number={2},
  pages={790--812},
  issn={0021-8693},
  review={\MR {1850659}},
  doi={10.1006/jabr.2001.8868},
}

\bib{Tor}{article}{
  author={Torasso, Pierre},
  title={M\'{e}thode des orbites de Kirillov-Duflo et repr\'{e}sentations minimales des groupes simples sur un corps local de caract\'{e}ristique nulle},
  language={French},
  journal={Duke Math. J.},
  volume={90},
  date={1997},
  number={2},
  pages={261--377},
  issn={0012-7094},
  review={\MR {1484858}},
  doi={10.1215/S0012-7094-97-09009-8},
}

\bib{Tsa22}{article}{
  author={Tsai, Cheng-Chiang},
  title={Geometric wave-front set may not be a singleton},
  journal={Preprint},
  volume={},
  date={2022},
  pages={},
  review={},
  doi={},
}

\bib{Tsa19}{article}{
  author={Tsai, Wan-Yu},
  title={Some genuine small representations of a nonlinear double cover},
  journal={Trans. Amer. Math. Soc.},
  volume={371},
  date={2019},
  number={8},
  pages={5309--5340},
  issn={0002-9947},
  review={\MR {3937294}},
  doi={10.1090/tran/7351},
}

\bib{Var0}{article}{
  author={Varma, Sandeep},
  title={On a result of Moeglin and Waldspurger in residual characteristic 2},
  journal={Math. Z.},
  volume={277},
  date={2014},
  number={3-4},
  pages={1027--1048},
  issn={0025-5874},
  review={\MR {3229979}},
  doi={10.1007/s00209-014-1292-8},
}

\bib{We6}{article}{
  author={Weissman, Martin H.},
  title={L-groups and parameters for covering groups},
  language={English, with English and French summaries},
  note={L-groups and the Langlands program for covering groups},
  journal={Ast\'erisque},
  date={2018},
  number={398},
  pages={33--186},
  issn={0303-1179},
  isbn={978-2-85629-845-9},
  review={\MR {3802418}},
}

\end{biblist}
\end{bibdiv}

\end{document}